\documentclass[12pt,oneside,english]{amsart}
\usepackage[T1]{fontenc}
\usepackage[utf8]{inputenc}
\usepackage{geometry}
\usepackage{fancyhdr}
\usepackage{amstext}
\usepackage{amsthm}
\usepackage{amssymb}
\usepackage{mathdots}
\usepackage{esint}
\usepackage{subcaption}
\usepackage{caption}
\usepackage{float} 
\usepackage{placeins} 

\usepackage[normalem]{ulem}
\usepackage[dvipsnames]{xcolor}
\definecolor{editbrown}{rgb}{0.45,0.22,0.05}

\usepackage{dsfont}

\makeatletter
\providecommand{\tabularnewline}{\\}
\numberwithin{equation}{section}
\numberwithin{figure}{section}
\theoremstyle{plain}
\newtheorem{thm}{\protect\theoremname}
\newtheorem{prop}{\protect\propositionname}
\newtheorem{lemma}{\protect\lemmaname}
\newtheorem{corollary}{\protect\corollaryname}
\theoremstyle{remark}

\theoremstyle{remark}
\newtheorem*{remark*}{Remark}

\theoremstyle{definition}
\newtheorem{example}{Example}
\usepackage[english]{babel}
\usepackage{fancyhdr}
\usepackage{tikz}
\usepackage{graphicx}
\usepackage{hyperref}

\DeclareGraphicsExtensions{.pdf,.png,.jpg,.jpeg}

\newif\ifRplotFound
\newcommand{\RplotInclude}[1]{\includegraphics[width=\linewidth]{#1}}%
\newcommand{\Rplot}[1]{%
  \begingroup
  \RplotFoundfalse
  \def\RplotTry##1{%
    \IfFileExists{##1}{\RplotInclude{##1}\RplotFoundtrue}{}%
    \ifRplotFound\else\IfFileExists{##1.png}{\RplotInclude{##1.png}\RplotFoundtrue}{}\fi%
    \ifRplotFound\else\IfFileExists{##1.pdf}{\RplotInclude{##1.pdf}\RplotFoundtrue}{}\fi%
    \ifRplotFound\else\IfFileExists{##1.jpg}{\RplotInclude{##1.jpg}\RplotFoundtrue}{}\fi%
    \ifRplotFound\else\IfFileExists{##1.jpeg}{\RplotInclude{##1.jpeg}\RplotFoundtrue}{}\fi%
  }%
  \RplotTry{#1}%
  \ifRplotFound\else\RplotTry{fig/#1}\fi%
  \ifRplotFound\else\RplotTry{figures/#1}\fi%
  \ifRplotFound\else\RplotTry{plots/#1}\fi%
  \ifRplotFound\else\RplotTry{images/#1}\fi%
  \ifRplotFound\else\RplotTry{img/#1}\fi%
  \ifRplotFound\else\RplotTry{v63_package/#1}\fi%
  \ifRplotFound\else\RplotTry{./v63_package/#1}\fi%
  \ifRplotFound\else
    \fbox{\ttfamily Missing figure file: \detokenize{#1}}%
  \fi
  \endgroup
}
\graphicspath{{./}{./fig/}{./figures/}{./plots/}{./images/}{./img/}{./v63_package/}{v63_package/}}
\usetikzlibrary{arrows,shapes}
\usetikzlibrary{calc,decorations.markings}
\tikzset{
  reversed with radius/.style={
    x radius=#1,
    y radius=-#1,
 }
}
\tikzset{
  with arrows/.style={
    decoration={ markings,
      mark=between positions #1 and .999 step #1 with {\arrow{stealth}}
    }, postaction={decorate}
  }, with arrows/.default=25mm,
}
\providecommand{\tabularnewline}{\\}

\providecommand{\lemmaname}{Lemma}
\providecommand{\propositionname}{Proposition}
\providecommand{\theoremname}{Theorem}
\providecommand{\corollaryname}{Corollary}
\newcommand{\abs}[1]{\ensuremath{|#1|}}

\newcommand{\diff}{\mathop{}\!\mathrm{d}}

\newcommand{\cF}{\mathcal{F}}

\newcommand{\cO}{\mathcal{O}}

\newcommand{\cS}{\mathcal{S}}

\providecommand{\dd}{\mathop{}\!\mathrm{d}}
\providecommand{\corollaryname}{Corollary}
\providecommand{\lemmaname}{Lemma}
\providecommand{\propositionname}{Proposition}
\providecommand{\remarkname}{Remark}
\providecommand{\theoremname}{Theorem}
\theoremstyle{definition}
\newtheorem{defn}{Definition}[section]
\newcounter{appsection}
\newcounter{appsubsection}[appsection]
\renewcommand{\theappsection}{\arabic{appsection}}
\renewcommand{\theappsubsection}{\theappsection.\arabic{appsubsection}}
\newcommand{\appsection}[2]{\refstepcounter{appsection}\section*{Appendix~\theappsection: #1}\label{#2}}
\newcommand{\appsubsection}[2]{\refstepcounter{appsubsection}\subsection*{\theappsubsection. #1}\label{#2}}

\DeclareFontFamily{U}{wncy}{}
\DeclareFontShape{U}{wncy}{m}{n}{<->wncyr10}{}
\DeclareSymbolFont{mcy}{U}{wncy}{m}{n}
\DeclareMathSymbol{\Sh}{\mathord}{mcy}{"58}

\makeatother
\begin{document}
\title{Projected Inner-Function Dynamics and Crystalline Measures of Meyer-Blaschke type}
\author{Oleg Szehr}
\address{Dalle Molle Institute for Artificial Intelligence (IDSIA) - SUPSI/USI\\
Via la Santa 1, 6962 Lugano-Viganello, Switzerland}
\email{oleg.szehr@idsia.ch}

\author{Rachid Zarouf}
\address{Aix Marseille Univ, Lab ADEF, Campus Univ St Jerome, 52 Ave Escadrille Normandie\\
F-13013 Marseille, France}
\address{Aix-Marseille Univ, Universit\'{e} de Toulon, CNRS, CPT, Marseille, France}
\email{rachid.zarouf@univ-amu.fr}

\begin{abstract}
We generalize a construction of Yves Meyer of sparse crystalline measures arising from powers of a Blaschke factor. Starting from a recursion $f_n=\theta^n f_0$ on the unit circle where \(\theta\) is an inner function, we project the Fourier coefficient array $\widehat{f_n}(k)$ to the real line by placing its entries at the frequencies $k+\alpha n$. We identify the role of model spaces in this construction: in Meyer's one-factor Blaschke recursion, the requirement that the coefficient array $\widehat{f_n}(k)$ vanish whenever \(kn<0\) is equivalent to $f_0\in K_{zb_\lambda}$, and for general inner functions the condition $f_0\in K_{z\theta}$ yields a purely atomic Radon measure with locally finite support and polynomial growth on the Fourier side. We also show that, when $f_0$ is holomorphic in an annulus containing the unit circle, exponential Fourier decay is sufficient to obtain a purely atomic Radon measure of polynomial growth, though not necessarily locally finite support. For finite Blaschke products, the coefficient recursion gives an explicit annihilating exponential polynomial whose zero set controls the support and separation of the inverse Fourier transform. This yields Meyer-Blaschke-type crystalline measures and Poisson identities with sampling and finite-truncation consequences.
\end{abstract}
\subjclass[2020]{Primary 42A38; Secondary 30J05, 30J10, 30H10, 46F12, 94A20.}

\keywords{Crystalline measures, Fourier transform, inner functions, Fourier coefficients, model spaces, Clark measures, Poisson summation, sampling formulas.}
\maketitle
\section{Introduction}
The purpose of this paper is to build on Y.~Meyer's recent construction of sparse crystalline measures arising from powers of a Blaschke factor, identify its underlying mechanism, and develop both its structural and quantitative aspects. We shall refer to the measures produced by this mechanism and its variants as \emph{Meyer-Blaschke-type measures}. Meyer's article is one of those characteristically economical pieces in which a simple lattice recurrence hides a rich Fourier-theoretic object. Our aim is to isolate the elegant argument behind Meyer's construction, clarify the role of the initial condition, extend the construction from one Blaschke factor to finite Blaschke products and more general inner functions, and prove quantitative estimates that lead to finite-truncation results.

The basic object is obtained as follows. Starting from a sequence of functions $f_n$ on the unit circle, one forms the coefficient array
\[
c(k,n)=\widehat{f_n}(k),
\qquad k,n\in\mathbb Z,
\]
and projects this two-dimensional array to the real line by placing $c(k,n)$ at the frequency $k+\alpha n$. This gives the formal Fourier-side measure
\[
\widehat\kappa
=
\sum_{k,n\in\mathbb Z}
c(k,n)\delta_{k+\alpha n}.
\]
In Meyer's original construction one takes $f_n=b_\lambda^nf_0$, where
\[
b_\lambda(z)=\frac{z-\lambda}{1-\overline\lambda z}
\]
is the Blaschke factor associated with \(\lambda\) in the unit disk, $f_0$ denotes an initial condition and $\alpha$ is irrational. The central point is that the recurrence satisfied by $f_n$ becomes, after projection, an annihilation relation for $\widehat\kappa$. Taking inverse Fourier transforms then constrains the support of $\kappa$ to lie in the zero set of an explicit exponential polynomial. This mechanism is the source of discreteness, and ultimately of sparsity, for the Meyer-Blaschke-type measures.

Our main point is that Meyer's construction is an instance of a more general principle: inner-function recursions on the unit circle can generate purely atomic measures on the Fourier side and, under additional structural hypotheses, sparse crystalline measures. We begin by making precise which initial conditions $f_0$ are compatible with this mechanism for the recursion $f_n=\theta^n f_0$, where \(\theta\) is an inner function. In the case $\theta=b_\lambda$, requiring the coefficient array $c(k,n)$ to vanish whenever \(kn<0\) is equivalent to the model-space condition, namely \(f_0\in K_{zb_\lambda}\).
The same link persists for arbitrary inner functions, where $f_0\in K_{z\theta}$ provides the natural class of initial conditions for which the projected Fourier-side measure has locally finite support and polynomial growth.

We then prove a complementary stability principle: the exact condition \(c(k,n)=0\) whenever \(kn<0\) may be replaced by exponential Fourier decay. For initial conditions holomorphic in an annulus around the unit circle, the projected Fourier-side series still defines a purely atomic Radon measure of polynomial growth, hence a tempered distribution, although its support need not be locally finite.

For finite Blaschke products, these mechanisms combine to give a direct extension of Meyer's Blaschke-factor construction. The recursion becomes a finite recurrence relation for the coefficients $c(k,n)$, which produces an explicit annihilating exponential polynomial on the inverse Fourier side. Its zero set, controlled through the phase derivative of the Blaschke product, gives quantitative separation estimates for the support of $\kappa$. In the cases where the Fourier-side support is locally finite, this yields sparse Meyer-Blaschke-type crystalline measures.

We finally turn to the sampling interpretation of the Poisson identities associated with Meyer-Blaschke-type measures. The generalized Poisson formula relates samples of a test function on the support of $\kappa$ to values of its Fourier transform on the support of $\widehat\kappa$. In the presence of a suitable spectral gap, this relation can be read as a reconstruction formula for band-limited functions. For general Schwartz functions, it instead leads naturally to finite Fourier-side approximations, whose convergence is quantified by the truncation estimates proved below.

In the broader circle of ideas connecting sampling with model spaces, important work based on a different mechanism is due to
A.~D.~Baranov, P.~Jaming, K.~Kellay, and M.~Speckbacher
~\cite{BaranovJamingKellaySpeckbacher2024}, who extend oversampling phenomena from the Paley--Wiener setting to model spaces associated with meromorphic inner functions. Related quantitative estimates for sampling constants in model spaces were obtained by A.~Hartmann, P.~Jaming, and K.~Kellay~\cite{HartmannJamingKellay2020}.

The paper is organized as follows. Section~\ref{sec:prelim} recalls the distributional framework for crystalline measures and reviews the Blaschke-product construction underlying Meyer's examples. Section~\ref{sec:meyer-measures} develops the general Meyer--Blaschke-type mechanism: first for one Blaschke factor, then for general inner functions, initial conditions holomorphic in an annulus, and finite Blaschke products. Section~\ref{sec:fourier-trunc-sampling} proves the Fourier-side truncation estimates and explains their sampling interpretation. The appendices collect the asymptotic and summability estimates for coefficients of powers of Blaschke products used in the main text.

\section{Preliminaries and Background}\label{sec:prelim}
\subsection{Crystalline measures}
We begin by recalling the measure-theoretic and distributional framework; see~\cite{Bourbaki} and~\cite{HormanderI} for background. A tempered distribution is a continuous linear functional on the Schwartz space, and a Radon measure defines such a distribution provided it has at most polynomial growth at infinity. We write \(\delta_\lambda\) for the Dirac measure at \(\lambda\in\mathbb R\), and for countable $\Lambda\subset\mathbb R$ we shall consider formal series of the form
\[
\mu=\sum_{\lambda\in\Lambda} c(\lambda)\delta_\lambda .
\]
Such a series defines a Radon measure if
$\sum_{|\lambda|<R} |c(\lambda)|<\infty$ for every $R>0$. In that case \(\mu\) is a purely atomic measure, and its support is the closure of the set of \(\lambda\in\Lambda\) for which \(c(\lambda)\neq 0\). The set \(\Lambda\subset\mathbb R\) is {locally finite} if its intersection with every bounded set is finite, and {uniformly discrete} if the distance between any two distinct points of \(\Lambda\) is bounded below by a positive constant. The Dirac comb \(\Sh_{a,b}=\sum_{\lambda\in a\mathbb Z+b}\delta_\lambda\) is a purely atomic measure with locally finite, uniformly discrete support; it has at most polynomial growth at infinity and thus defines a tempered distribution. We use the Fourier transform convention
\[
\widehat f(\xi)=\int_{\mathbb R} f(x)e^{-2\pi i x\xi}\,\diff x.
\]
Then the distributional Fourier transform of the Dirac comb $\Sh_{a,0}$, $a>0$, is the Dirac comb $\widehat{\Sh}_{a,0}=\frac{1}{a}\Sh_{1/a,0}$. To translate Dirac combs, we use convolution on Schwartz functions: if \(u\in\cS(\mathbb R)\) and \(v\in\cS'(\mathbb R)\), then \(u*v\) is defined by duality, and if \(u,v\in\cS'(\mathbb R)\) with one factor compactly supported, then \(u*v\) is again well defined. In these cases the convolution theorem asserts \(\cF(u*v)=\cF(u)\,\cF(v)\). Since \(\Sh_{a,b}=\Sh_{a,0}*\delta_b\), taking Fourier transforms and using \(\widehat{\delta_b}(\xi)=e^{-2\pi i b\xi}\) yields the classical Poisson summation formula
\[
\widehat{\Sh_{a,b}}(\xi)=e^{-2\pi i b\xi}\,\frac{1}{a}\Sh_{1/a,0}(\xi),\ \text{that is},\ \sum_{k\in\mathbb Z}\widehat\varphi(ak+b)
=
\frac{1}{a}\sum_{k\in\mathbb Z}
e^{-2\pi i b k/a}\,{\varphi}\!\left(\frac{k}{a}\right),
\ \varphi\in\mathcal S(\mathbb R).
\]
Analogously, if
\[
\kappa=\sum_{\lambda\in\Lambda}c(\lambda)\delta_\lambda
\qquad\text{and}\qquad
\widehat\kappa=\sum_{s\in S}d(s)\delta_s,
\]
then applying the distributional Fourier transform to test functions yields the generalized Poisson identity
\[
\sum_{\lambda\in\Lambda}c(\lambda)\widehat\varphi(\lambda)
=
\sum_{s\in S}d(s)\varphi(s).
\]
This motivates the following definition.
\begin{defn}\label{def:crystallMeath}
A \emph{crystalline measure} on \(\mathbb R\) is a purely atomic Radon measure \(\kappa\) of the form $\kappa=\sum_{\lambda\in\Lambda} c_\lambda\,\delta_\lambda$, where $\Lambda$ is locally finite, such that \(\kappa\) defines a tempered distribution and its Fourier transform \(\widehat{\kappa}\) is a distribution that again corresponds to a purely atomic Radon measure, say $\widehat{\kappa}=\sum_{s\in S} d_s\,\delta_s$, with locally finite \(S\). A crystalline measure $\kappa$ is called {sparse} if its support \(\Lambda\) is uniformly discrete.
\end{defn}

Related distributional summation formulas can already be found in the 1950s literature on number theory and harmonic analysis; see A.~Weil~\cite{Weil1952}, J.~-P.~Kahane and S.~Mandelbrojt~\cite{KahaneMandelbrojt1958}, and A.P.~Guinand~\cite{Guinand1959}. The modern theory of crystalline measures was subsequently revived and developed in a systematic way through a number of contributions; see, for example, A.~Córdoba~\cite{Cordoba1989}, Y.~Meyer~\cite{MeyerPNAS16,MeyerRMI2017}, N.~Lev and A.~Olevskii~\cite{LevOlevskii2013,LevOlevskii2015,LevOlevskii2016}, M.~Kolountzakis~\cite{Kolountzakis2016}, as well as the survey~\cite{MeyerBourbaki1194}. 
More recent contributions include the Fourier interpolation formula of D.~Radchenko and M.~Viazovska~\cite{RadchenkoViazovska2019}, work on Fourier interpolation and uniqueness pairs by A.~Kulikov, F.~Nazarov and M.~Sodin~\cite{Kulikov2021,KNS2023}, the zeta/L-function interpolation framework of A.~Bondarenko, D.~Radchenko, and K.~Seip~\cite{BondarenkoRadchenkoSeip2023}, the powers-of-integers uniqueness pairs of J.~P.~G.~Ramos and M.~Sousa~\cite{RamosSousa2022}. A signal-sampling perspective on this circle of ideas, motivated by engineering applications and based on two-sided sampling in the time and Fourier domains, is developed in~\cite{Szehr2025,KayaalpSzehr2026}. For a recent historical account of this circle of ideas, see Meyer~\cite{MeyerKahane2026}.

Recent related developments include the construction by P.~Kurasov and P.~Sarnak of positive crystalline measures and Fourier quasicrystals from stable polynomial pairs~\cite{KurasovSarnak2020}, the classification framework of F.~Gon\c{c}alves for Fourier summation formulae and crystalline measures with quadratic decay~\cite{Goncalves2026}, and the work of J.~Maz\'a\v{c}, C.~Richard and N.~Strungaru on almost periodicity and translation boundedness for crystalline measures~\cite{MazacRichardStrungaru2026}. These results provide part of the recent structural background for the Meyer--Blaschke-type constructions considered here.

In a different, multidimensional direction, Meyer constructed crystalline measures from Ahern measures and products of independent lighthouses~\cite{MeyerMultidimensional2023}. In a 2026 addendum to that article, communicated privately to us~\cite{MeyerPrivate2026}, Meyer notes that the one-dimensional counterpart of one of the results in that paper had already been proved by D.~N.~Clark~\cite{Clark1972}. Related recent work of Meyer develops generalized inner functions on the \(n\)-dimensional torus, including examples built from Blaschke products, and relates them to Aleksandrov--Clark measures and lighthouses~\cite{MeyerKolmogorov}.

\subsection{Blaschke products and Meyer's sparse crystalline measures}

Since we build on Meyer's work, we briefly recall the underlying setting. 

Let \(\mathbb D\subset\mathbb C\) be the unit disk, \(\mathbb T=\partial\mathbb D\) its boundary, and \(H^2(\mathbb D)\) the Hardy space of holomorphic functions on \(\mathbb D\) whose radial boundary values belong to \(L^2(\mathbb T)\). We write \(\widehat f(k)=\int_{0}^{1} f(e^{2\pi ix})e^{-2\pi ikx}\text{d} x\), \(k\in\mathbb Z\), for the Fourier coefficients of \(f\in L^2(\mathbb T)\). The Taylor coefficients of \(f\in H^2(\mathbb D)\) agree with the Fourier coefficients of its radial boundary in $L^2(\mathbb T)$ and we write $f=\sum_{k\geq0}\hat{f}(k)z^k$. Similarly $H^\infty(\mathbb D)$ is the Hardy space of holomorphic functions whose boundary values belong to $L^\infty(\mathbb T)$. A function $\theta\in H^\infty(\mathbb D)$ is called inner if $|\theta(z)|=1$ almost everywhere on $\mathbb T$. The multiplication by inner functions is an isometry of \(H^2(\mathbb D)\). The model space is $K_\theta = H^2(\mathbb D)\ominus\theta H^2(\mathbb{D})$, where the underlying inner product is taken from $L^2(\mathbb{T})$, see~\cite{NikolskisBook} for an introduction to model spaces.

For \(\lambda\in\mathbb D\), the elementary Blaschke factor
\[
b_\lambda(z)=\frac{z-\lambda}{1-\overline\lambda z}
\]
is an automorphism of \(\mathbb D\) satisfying \(|b_\lambda(z)|=1\) on \(\mathbb T\). Moreover, if \(n>0\), then \(b_\lambda^n\) is holomorphic and \(\widehat{b_\lambda^n}(k)=0\) for \(k<0\); similarly, for \(n<0\), one has \(\widehat{b_\lambda^n}(k)=0\) for \(k>0\), and \(\widehat{b_\lambda^n}(k)=\overline{\widehat{b_\lambda^{|n|}}(-k)}\).

We begin with a prototypical instance of Meyer's construction of sparse crystalline measures, based on the coefficient array generated by iterates of a single Blaschke factor. Given \(f_0\in H^2(\mathbb D)\), consider \(f_n=b_\lambda^n f_0\) as an element of $L^2(\mathbb T)$ and the array $c$ with coefficients \(c(k,n)=\widehat{f_n}(k)\), $k,n\in\mathbb Z$. The relation \(f_{n+1}=b_\lambda f_n\), equivalently \((1-\bar\lambda z)f_{n+1}=(z-\lambda)f_n\), induces the recursion
\begin{align}
c(k+1,n+1)-\bar\lambda\,c(k,n+1)=c(k,n)-\lambda\,c(k+1,n),
\qquad k,n\in\mathbb Z,\label{eq:simpleRecursion}
\end{align}
with initial condition \(c(k,0)=\widehat f_0(k)\). Observe that whenever \(kn<0\) we have $\widehat{b_\lambda^n}(k)=0$. By contrast, $c$ may assume nonzero values for any $k,n\in\mathbb Z$. We will show in the main body that $c(k,n)=0$ for $kn<0$ if and only if
$f_0$ lies in the {model space} $K_{zb_\lambda}=H^2(\mathbb D)\ominus z b_\lambda H^2(\mathbb D)$, see~Lemma~\ref{lem:model-space-characterization} below.
%
%

For irrational \(\alpha\) consider the formal Dirac series
\[
\widehat\kappa=\sum_{n,k\in\mathbb Z}c(k,n)\delta_{k+\alpha n}.
\]

Whether this formal series defines a Radon measure, a tempered distribution, and eventually the Fourier transform of a crystalline measure depends on three key ingredients:

\emph{1)} If $\alpha>0$ and $c(k,n)=0$ whenever $kn<0$, then $|k+\alpha n|=|k|+\alpha |n|$. Thus only finitely many pairs $(k,n)$ contribute to a compact interval and, consequently, the support of $\hat\kappa$ is locally finite. 

\emph{2)} If, in addition, the coefficients are uniformly bounded, then $\widehat\kappa$ is a purely atomic Radon measure and satisfies the polynomial growth estimate $|\widehat\kappa|([-R,R])=O(R^2)$, hence defines a tempered distribution. 

\emph{3)} Finally, one must show that the inverse Fourier transform $\kappa$ is again a purely atomic Radon measure, not merely a tempered distribution. The finite recursion~\eqref{eq:simpleRecursion} provides the geometric support constraint needed to obtain this.

As a consequence, Meyer shows that the choice $f_0=1$, \(\lambda=\frac12\) and \(\alpha=\sqrt2\) yields the sparse crystalline measure
\[
\widehat\kappa_0=\sum_{n,k\in\mathbb Z}\widehat{b_{1/2}^n}(k)\,\delta_{k+\sqrt2\,n}.
\]

While the structural conditions establish crystallinity, finer estimates for $\widehat{b_\lambda^n}(k)$ are needed to quantify the Fourier-side masses. Meyer used the global bound
$$
\|\widehat{b_\lambda^n}\|_{\ell^\infty}\le C n^{-1/3},
\qquad n\ge1,$$
which captures the maximal order of magnitude of the coefficients. More recent work on the asymptotic analysis of the Fourier coefficients \(\widehat{b_\lambda^n}(k)\)~\cite{SzehrZaroufJAM,SzehrZaroufJAT2022,BFZCA} allows for a refined study of the relationship between initial conditions and decay estimates in Meyer's construction. Speaking roughly, writing \(\alpha_0=\frac{1-\lambda}{1+\lambda}\), the coefficients exhibit three qualitatively distinct regimes as \(n\to\infty\): exponential decay when \(k/n\) stays outside the critical interval \([\alpha_0,\alpha_0^{-1}]\), oscillatory decay of order \(n^{-1/2}\) in its interior, and Airy-type transition behavior of order \(n^{-1/3}\) near the interval's endpoints \(\alpha_0\) and \(\alpha_0^{-1}\). This is illustrated in Figure~\ref{POU}; the detailed region-by-region expansions are presented in Appendix~\ref{app:asymptotic-formulas}.

\begin{figure}[ht!]
\centering
\Rplot{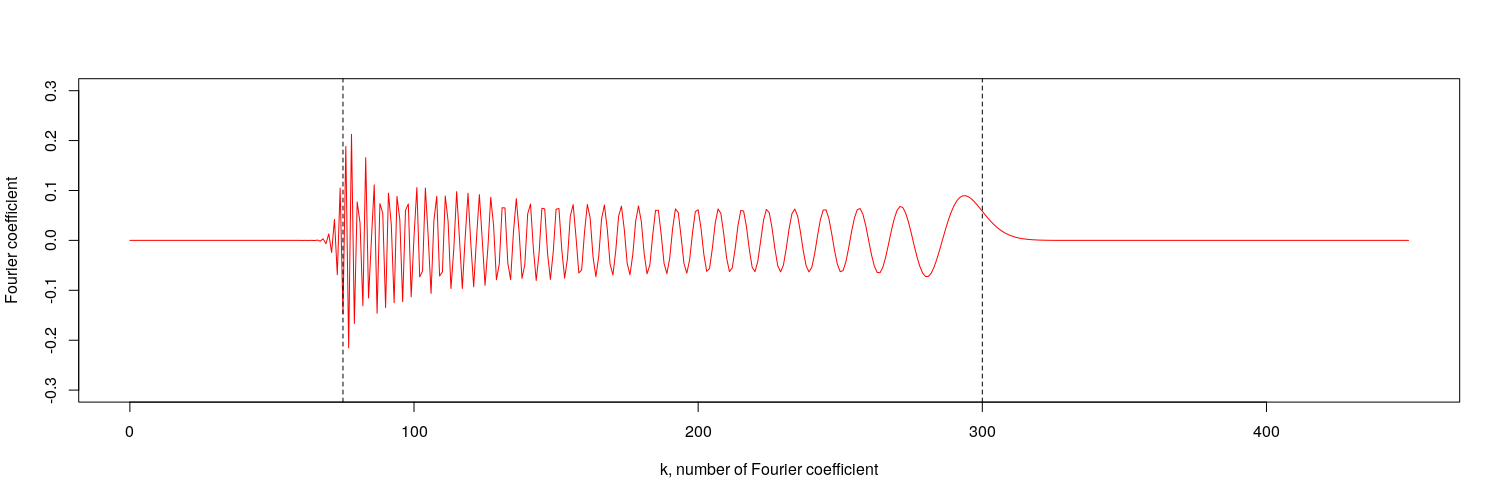}
\caption{Numerical plot of the Fourier coefficients $\widehat{b^n_\lambda}(k)$ for $\lambda=\frac{1}{3}$ and $n=150$. Outside the critical intervals $[\frac{1-\lambda}{1+\lambda}n,\frac{1+\lambda}{1-\lambda}n]$ (marked by dashed, vertical lines) the coefficients decay exponentially.}
\label{POU} 
\end{figure}

\section{Crystalline measures of Meyer-Blaschke type}\label{sec:meyer-measures}

The key structural point in Meyer's construction is that a recursion for a coefficient array on $\mathbb Z^2$ is converted into an annihilation relation for an associated atomic distribution on $\mathbb R$, and subsequently into a geometric constraint on the support of its inverse Fourier transform:
\[
c*a=0
\quad\Longrightarrow\quad
\widehat\kappa*\tau=0
\quad\Longrightarrow\quad
\kappa\,\phi=0,
\qquad \phi=\cF^{-1}(\tau).
\]
The first identity is the coefficient recursion written as a convolution equation on \(\mathbb Z^2\), where the sequence $a$ has finite support. After placing the coefficient \(c(k,n)\) at the point \(k+\alpha n\), this becomes the convolution relation \(\widehat\kappa*\tau=0\) on \(\mathbb R\), where \(\tau\) is the finite atomic measure determined by $a$. Taking inverse Fourier transforms then gives the pointwise annihilation relation \(\kappa\,\phi=0\). It follows that the support of $\kappa$ is contained in the zero set of $\phi$,
\[
\text{supp}(\kappa)\subset Z(\phi)=\{x\in\mathbb R:\phi(x)=0\}.
\]
Hence the support of \(\kappa\) is controlled by the real zero set of the explicit function \(\phi\). In particular, local finiteness of $Z(\phi)$ forces \(\kappa\) to be supported on a discrete set - if the relevant zeros are simple the distribution-structure theorem implies $\kappa$ is purely atomic - and uniform discreteness of $Z(\phi)$ yields sparsity.

We first make this mechanism explicit for the coefficient arrays generated by the
one-factor recursion \(b_\lambda^n f_0\). We then turn to extending Meyer's work.
We describe sufficient conditions on the initial condition \(f_0\) for the recursion to
yield a crystalline measure. 
%
%
%
%
%
%

\subsection{Crystalline measures generated by powers of a  Blaschke factor}

This section serves to recapitulate Meyer's construction and to illustrate our ideas in a well-known setting. 
We formulate coefficient-array conditions arising from the recursion~\eqref{eq:simpleRecursion} that ensure that the associated projected Dirac series defines a sparse crystalline measure, characterize the admissible initial condition \(f_0\) in the Hardy-space setting, and provide a quantitative separation bound for \cite[Lemmas~6--7]{Meyer}.
\begin{thm}\label{thm:meyer}
Let $\lambda\in\mathbb D$, let \(\alpha>0\) be irrational and let \(c:\mathbb Z^2\to\mathbb C\) satisfy
\[
c(k+1,n+1)-\bar\lambda\,c(k,n+1)=c(k,n)-\lambda\,c(k+1,n),
\quad k,n\in\mathbb Z.
\]
Assume moreover that $c(k,n)=0$ whenever $kn<0$ and that $\sup_{k,n\in\mathbb Z}|c(k,n)|<\infty$.
Then the formal series
\[
\widehat\kappa=\sum_{k,n\in\mathbb Z} c(k,n)\,\delta_{k+\alpha n}
\]
defines a purely atomic Radon measure with locally finite support and at most quadratic growth, hence a tempered distribution. Its inverse Fourier transform \(\kappa\) is a sparse crystalline measure, whose support \(\Lambda=\text{supp}(\kappa)\) is \(\delta(\alpha,\lambda)\)-separated, where
\[
\delta(\alpha,\lambda)=
\frac{2(1+\alpha)(1-|\lambda|)}
{2\pi\bigl((1+\alpha^2)(1+|\lambda|)+2\alpha\bigr)}.
\]
\end{thm}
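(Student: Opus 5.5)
The plan is to follow the three-step mechanism $c*a=0\Rightarrow\widehat\kappa*\tau=0\Rightarrow\kappa\phi=0$ described at the start of Section~\ref{sec:meyer-measures}.

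\emph{Step 1 (the Fourier side).} Since $\alpha>0$ and $c(k,n)=0$ whenever $kn<0$, every contributing pair satisfies $|k+\alpha n|=|k|+\alpha|n|$. Hence the interval $[-R,R]$ meets at most $O(R^2)$ pairs, and only finitely many of them. Irrationality of $\alpha$ makes the map $(k,n)\mapsto k+\alpha n$ injective, so no cancellation between atoms occurs. Together with $\sup|c|<\infty$ this gives $|\widehat\kappa|([-R,R])=O(R^2)$. Therefore $\widehat\kappa$ is a purely atomic Radon measure with locally finite support and polynomial growth, hence tempered.

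\emph{Step 2 (annihilation).} I will rewrite the recursion as a convolution identity. Set
\[
\tau=\delta_0-\bar\lambda\,\delta_1+\lambda\,\delta_\alpha-\delta_{1+\alpha}.
\]
Compute the coefficient of $\widehat\kappa*\tau$ at the point $m+\alpha j$, using injectivity once more. It equals
\[
c(m,j)-\bar\lambda c(m-1,j)+\lambda c(m,j-1)-c(m-1,j-1),
\]
which vanishes by the recursion with $(k,n)=(m-1,j-1)$. So $\widehat\kappa*\tau=0$. Taking inverse Fourier transforms gives $\kappa\,\phi=0$. Writing $w=e^{2\pi ix}$, the function is
\[
\phi(x)=1-\bar\lambda w+\lambda e^{2\pi i\alpha x}-w\,e^{2\pi i\alpha x}=(1-\bar\lambda w)\bigl(1-e^{2\pi i\alpha x}b_\lambda(w)\bigr).
\]
Since $1-\bar\lambda w\neq0$, the zeros of $\phi$ are exactly the solutions of $\psi(x)\in2\pi\mathbb Z$. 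Here $\psi(x)=2\pi\alpha x+\arg b_\lambda(e^{2\pi ix})$ is a continuous phase. Its derivative is
\[
\psi'(x)=2\pi\alpha+2\pi P_\lambda(w),\qquad P_\lambda(w)=\frac{1-|\lambda|^2}{|w-\lambda|^2}\in\Bigl[\tfrac{1-|\lambda|}{1+|\lambda|},\tfrac{1+|\lambda|}{1-|\lambda|}\Bigr].
\]
Hence $\psi$ is strictly increasing and the zeros of $\phi$ are simple. Consecutive zeros differ in $\psi$ by exactly $2\pi$, so they are at least $\bigl(\alpha+\tfrac{1+|\lambda|}{1-|\lambda|}\bigr)^{-1}$ apart. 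It then remains to verify the elementary inequality that this is $\ge\delta(\alpha,\lambda)$. If the stated constant turns out to come from a cruder route, I would instead bound $|\phi'|$ from below on the zero set and $|\phi''|$ from above by $2\pi\bigl((1+\alpha^2)(1+|\lambda|)+2\alpha\bigr)$, and use a mean-value argument. Either way, $Z(\phi)$ is uniformly discrete and in particular locally finite.

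\emph{Step 3 (atomicity of $\kappa$).} $\kappa$ is a tempered distribution supported in the locally finite set $Z(\phi)$. By the structure theorem for point-supported distributions, near each zero $x_0$ it is a finite sum $\sum_{m\le M}a_m\delta_{x_0}^{(m)}$. Multiply by $\phi$, which is smooth with $\phi(x_0)=0$ and $\phi'(x_0)\neq0$. The coefficient of $\delta^{(M-1)}_{x_0}$ in $\phi\,\kappa$ is then a nonzero multiple of $a_M\phi'(x_0)$ when $M\ge1$. So $\kappa\phi=0$ forces $a_M=0$, and by descending induction $\kappa=\sum_{x\in Z(\phi)}a_x\delta_x$. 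This is a purely atomic measure with uniformly discrete support, and it is tempered as the inverse Fourier transform of a tempered distribution. Combined with Step 1, $\kappa$ is a sparse crystalline measure in the sense of Definition~\ref{def:crystallMeath}.

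The main obstacles are the order-reduction in Step 3, which needs to be done carefully including the lower-order terms of the Leibniz expansion, and matching the precise separation constant $\delta(\alpha,\lambda)$. The qualitative separation itself follows directly from the monotone phase argument.
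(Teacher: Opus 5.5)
Your proof is correct. Step 1, the annihilation identity $\widehat\kappa*\tau=0$, and the atomicity argument in Step 3 follow the paper. Your descending induction on the order of $\delta_{x_0}^{(m)}$ simply spells out what the paper compresses into ``$\min|\phi'|>0$, so no derivative terms occur''.

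Where you differ is the separation estimate. The paper takes what you call the cruder route. The triangle inequality gives $|\phi'(x)|\ge 2\pi(1+\alpha)(1-|\lambda|)$ for every $x\in\mathbb R$, and $\|\phi''\|_{L^\infty}\le(2\pi)^2\bigl((1+\alpha^2)(1+|\lambda|)+2\alpha\bigr)$; note the square, which your fallback drops. Taylor's formula between consecutive zeros then yields $x_1-x_0\ge 2\min|\phi'|/\|\phi''\|_{L^\infty}=\delta(\alpha,\lambda)$.

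Your primary route uses the factorization $\phi=(1-\bar\lambda w)\bigl(1-e^{2\pi i\alpha x}b_\lambda(w)\bigr)$ and the monotone phase. This is exactly the mechanism the paper uses later in Theorem~\ref{thm:general-meyer}, with $M_{b_\lambda}=\frac{1+|\lambda|}{1-|\lambda|}$. The inequality you left open does hold. With $r=|\lambda|$, after dividing by $1-r$, the claim $\bigl(\alpha+\frac{1+r}{1-r}\bigr)^{-1}\ge\delta(\alpha,\lambda)$ is equivalent to $\pi\bigl((1+\alpha)^2+r(1+\alpha^2)\bigr)\ge(1+\alpha)^2+r(1-\alpha^2)$, which is evident. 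So your separation constant exceeds the stated one by at least a factor $\pi$.

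Each approach buys something different. Yours also gives the matching upper bound $x_{k+1}-x_k\le\bigl(\alpha+\frac{1-r}{1+r}\bigr)^{-1}$ and an exact description of $Z(\phi)$: one zero per level $\psi\in2\pi\mathbb Z$. The paper's argument is shorter and uses no Blaschke structure, only uniform bounds on $\phi'$ and $\phi''$. It therefore applies to any exponential polynomial with $\inf|\phi'|>0$.
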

\begin{proof}[Proof of Theorem~\ref{thm:meyer}]
The coefficient recursion in eq.~\eqref{eq:simpleRecursion} is of the form
\[
c*a=0,
\]
with
\[
a(0,0)=1,\qquad a(1,0)=-\bar\lambda,\qquad a(0,1)=\lambda,\qquad a(1,1)=-1,
\]
and \(a(k,n)=0\) otherwise. Projecting each $(k,n)\in\mathbb Z^2$ to $k+\alpha n\in\mathbb R$, yields a formal Dirac series $c\mapsto\widehat{\kappa}=\sum_{k,n}c(k,n)\delta_{k+\alpha n}$ and a finite atomic measure $a\mapsto\tau=\sum_{k,n}a(k,n)\delta_{k+\alpha n}$. In other words the recursion becomes
\[
\widehat\kappa * \tau=0,
\qquad
\tau=\delta_0-\bar\lambda\,\delta_1+\lambda\,\delta_\alpha-\delta_{1+\alpha},
\]
and after inverse Fourier transform,
\[
\kappa\,\phi=0,
\qquad
\phi(x)=1-\bar\lambda e^{2\pi i x}+\lambda e^{2\pi i\alpha x}-e^{2\pi i(1+\alpha)x}.
\]
On the support of \(c\), $|k+\alpha n|=|k|+\alpha|n|$ 
and thus
\[
\sum_{|k+\alpha n|\le R}|c(k,n)|
\le
\sup_{k,n\in\mathbb Z}|c(k,n)|\,\cdot
\#\bigl\{(k,n)\in\mathbb Z^2:\ |k|+\alpha|n|\le R\bigr\}
=O(R^2).
\]
Thus \(\widehat\kappa\) assigns finite measure to bounded intervals and is thus a Radon measure. It has at most polynomial growth and consequently defines a tempered distribution. 

Since $\kappa\phi=0$, we have $\text{supp}(\kappa)\subset Z(\phi)$. The function $\phi$ is a nonzero exponential polynomial, hence real analytic and not identically zero; therefore $Z(\phi)$ is locally finite. Thus $\kappa$ is a tempered distribution supported on a discrete set. By the standard structure theorem for distributions supported on a discrete set $\kappa$ is locally a finite linear combination of Dirac masses and their derivatives; see~\cite{HormanderI}. 
Computing derivatives we find
\[
\phi'(x)=2\pi i\Bigl(-\bar\lambda e^{2\pi i x}
+\alpha\lambda e^{2\pi i\alpha x}
-(1+\alpha)e^{2\pi i(1+\alpha)x}\Bigr),
\]
\[
\phi''(x)=(2\pi)^2\Bigl(\bar\lambda e^{2\pi i x}
-\alpha^2\lambda e^{2\pi i\alpha x}
+(1+\alpha)^2 e^{2\pi i(1+\alpha)x}\Bigr),
\]
so that
\[
\min_{x\in\mathbb R}|\phi'(x)|\ge 2\pi(1+\alpha)(1-|\lambda|)>0,
\qquad
\|\phi''\|_{L^\infty(\mathbb R)}
\le (2\pi)^2\bigl((1+\alpha^2)(1+|\lambda|)+2\alpha\bigr).
\]
Since $\min_{x\in\mathbb R}|\phi'(x)|$ is strictly separated from $0$ no derivative terms can occur in $\kappa$. Thus $\kappa$ is a crystalline measure. 

To estimate the separation of $\Lambda$, note that if \(x_0<x_1\) are consecutive zeros of \(\phi\), then Taylor's formula at \(x_0\) and the identities \(\phi(x_0)=\phi(x_1)=0\) give
\[
x_1-x_0\ge \frac{2\,\min_{\mathbb R}|\phi'|}{\|\phi''\|_{L^\infty(\mathbb R)}}
\ge
\frac{2(1+\alpha)(1-|\lambda|)}
{2\pi\bigl((1+\alpha^2)(1+|\lambda|)+2\alpha\bigr)}.
\]
Hence \(\Lambda\) is uniformly discrete, so \(\kappa\) is sparse.
\end{proof}
We now turn to the Hardy space setting to demonstrate that the conditions of the theorem can be met, characterizing the necessary initial conditions for the {recursion associated with a single Blaschke factor}.
\begin{lemma}\label{lem:model-space-characterization}
Let \(\lambda\in\mathbb D\), let \(f_0\in H^2(\mathbb D)\), and for \(n\in\mathbb Z\) define
\[
f_n=b_\lambda^n f_0
\]
via boundary values on \(\mathbb T\). Set $c(k,n)=\widehat{f_n}(k)$, $k,n\in\mathbb Z$. Then the coefficient array \(c\) satisfies~\eqref{eq:simpleRecursion} and the following are equivalent:
\begin{enumerate}
\item $c(k,n)=0$ \text{whenever} $kn<0$,
\item $f_0\in K_{zb_\lambda}=H^2(\mathbb D)\ominus z b_\lambda H^2(\mathbb D)$.
\end{enumerate}
\end{lemma}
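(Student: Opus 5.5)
The recursion is immediate. Since $b_\lambda$ is inner, $f_n=b_\lambda^nf_0\in L^2(\mathbb T)$ for every $n\in\mathbb Z$. The identity $(1-\bar\lambda z)f_{n+1}=(z-\lambda)f_n$ holds on $\mathbb T$, and comparing the coefficient of $z^{k+1}$ on both sides gives \eqref{eq:simpleRecursion}.

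For the equivalence, I will split the condition $kn<0$ into the two cases $n>0,k<0$ and $n<0,k>0$. The first case imposes nothing: for $n\ge 1$ the product $b_\lambda^nf_0$ lies in $H^2(\mathbb D)$, so $c(k,n)=0$ for all $k<0$ automatically. Hence (1) is equivalent to
\[
\widehat{f_n}(k)=0\quad\text{for all } n\le -1,\ k\ge 1,
\]
that is, $b_\lambda^{-m}f_0\in\overline{H^2_0}^{\perp}$ for all $m\ge1$. Here I write $\overline{z H^2}$ for the anti-analytic functions with vanishing constant term. Equivalently, $\overline{b_\lambda}^{\,m}f_0$ lies in the closed span of $\{z^j:j\le 0\}$, which is $\overline{H^2}$.

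The key step is to translate this into orthogonality relations. The condition $\overline{b_\lambda}^m f_0\perp z^k$ for all $k\ge1$ reads $\langle f_0, z^k b_\lambda^m\rangle=0$ for all $k\ge1$, i.e. $f_0\perp z b_\lambda^m H^2$. For $m=1$ this says exactly $f_0\perp zb_\lambda H^2$, i.e. $f_0\in K_{zb_\lambda}$, which gives (1)$\Rightarrow$(2). For the converse, note that $zb_\lambda^mH^2\subset zb_\lambda H^2$ for every $m\ge1$. So $f_0\in K_{zb_\lambda}$ forces $f_0\perp zb_\lambda^mH^2$ for all $m\ge1$, hence $c(k,-m)=0$ for $k\ge1$. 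Combined with the automatic vanishing in the first case, this gives (1).

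I expect the only delicate point to be bookkeeping with the boundary values. I will use that $b_\lambda^{-m}=\overline{b_\lambda}^{\,m}$ on $\mathbb T$ and that $\widehat{g}(k)=\langle g,z^k\rangle_{L^2(\mathbb T)}$, and I will check that the span of $\{z^kb_\lambda^m h\}$ with $k\ge1$ and $h\in H^2$ is indeed $zb_\lambda^mH^2$. The nesting of these subspaces is what makes the infinitely many conditions in (1) collapse to the single condition at $m=1$. As a sanity check, $K_{zb_\lambda}=\mathrm{span}\{1,(1-\bar\lambda z)^{-1}\}$ is two-dimensional, which is consistent with $f_0=1$ in Meyer's example.
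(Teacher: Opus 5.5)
Your proof is correct and is essentially the paper's argument: (1)$\Rightarrow$(2) is the $n=-1$ case exactly as in the paper, and for (2)$\Rightarrow$(1) your inclusion $zb_\lambda^mH^2\subset zb_\lambda H^2$ is the dual form of the paper's induction, which shows that multiplying $\overline{b_\lambda}f_0\in\overline{H^2}$ by further factors $\overline{b_\lambda}$ creates no positive frequencies. There are two cosmetic slips: the space $\overline{H^2_0}^{\perp}$ you write first is, read literally, $H^2$ rather than the intended $\overline{H^2}=(zH^2)^{\perp}$ (your next sentence states it correctly), and your sanity-check description of $K_{zb_\lambda}$ degenerates at $\lambda=0$, where $K_{z^2}=\mathrm{span}\{1,z\}$.
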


\begin{proof}
\noindent\((2)\Rightarrow(1)\).
Assume \(f_0\in K_{zb_\lambda}\). This means that for every \(k\ge0\) we have
        $0
        =
        \langle f_0, z b_\lambda z^k\rangle_{L^2(\mathbb T)}
        =
        \widehat{\overline{b_\lambda}f_0}(k+1)$. In other words \(\bar b_\lambda f_0\) has no Fourier coefficients at strictly positive frequencies. Since \(\bar b_\lambda\) has no Fourier coefficients at strictly positive frequencies, it follows inductively that for every \(m\ge 1\) that \(\widehat{\bar b_\lambda^{\,m}f_0}(k)=0\) for all \(k>0\). Equivalently, 
\[
c(k,-m)=0,\quad k>0,\ m\ge 1.
\]
On the other hand, for \(n\ge 0\) one has \(f_n=b_\lambda^n f_0\in H^2(\mathbb D)\), hence
\[
c(k,n)=0,\quad k<0,\ n\ge 0.
\]
Therefore \(kn<0\) implies \(c(k,n)=0\).

\smallskip
\noindent\((1)\Rightarrow(2)\).
Assume that \(c(k,n)=0\) whenever \(kn<0\). Taking \(n=-1\), we obtain
\[
\widehat{\bar b_\lambda f_0}(k)=c(k,-1)=0,\quad k>0.
\]
This means that \(\overline{b_\lambda}f_0\) has no Fourier coefficients at strictly
positive frequencies and $\langle f_0,zb_\lambda z^{k-1}\rangle_{L^2(\mathbb T)}
        =
        \widehat{\overline{b_\lambda}f_0}(k)
        =
        0$.
Thus \(f_0\) is orthogonal to \(zb_\lambda H^2(\mathbb D)\) and 
$f_0\in H^2(\mathbb D)\ominus zb_\lambda H^2(\mathbb D)=K_{zb_\lambda}$.

\end{proof}
This leads to the following immediate corollary that extends Meyer's {construction for a single Blaschke factor} \cite[Theorem~2]{Meyer}; the sparsity conclusion still rests on the geometric mechanism of~\cite[Lemmas~6--7]{Meyer}.
\begin{corollary}\label{cor:firstMeyerCor}
Let \(\lambda\in\mathbb D\) and let $\alpha>0$ be irrational, let \(f_0\in K_{zb_\lambda}\) and for \(n\in\mathbb Z\) define $f_n=b_\lambda^n f_0$ via boundary values on \(\mathbb T\). Then all conclusions of Theorem~\ref{thm:meyer} hold for the measure $\widehat\kappa=\sum_{k,n\in\mathbb Z} \widehat{f_n}(k)\,\delta_{k+\alpha n}$, $k,n\in\mathbb{Z}$.
\end{corollary}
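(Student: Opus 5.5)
The plan is to check the three hypotheses of Theorem~\ref{thm:meyer} for the array $c(k,n)=\widehat{f_n}(k)$ and then apply that theorem verbatim.

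\textbf{Recursion.} Start from $f_{n+1}=b_\lambda f_n$ on $\mathbb T$. Multiplying by $1-\bar\lambda z$, which is bounded on $\mathbb T$, gives the identity
\[
(1-\bar\lambda z)f_{n+1}=(z-\lambda)f_n
\]
in $L^2(\mathbb T)$ for every $n\in\mathbb Z$. Comparing the Fourier coefficients at frequency $k+1$, and using that multiplication by $z$ shifts coefficients by one, yields exactly~\eqref{eq:simpleRecursion}. This step is already contained in Lemma~\ref{lem:model-space-characterization}.

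\textbf{Vanishing condition.} Since $f_0\in K_{zb_\lambda}$, the implication $(2)\Rightarrow(1)$ of Lemma~\ref{lem:model-space-characterization} gives $c(k,n)=0$ whenever $kn<0$.

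\textbf{Uniform boundedness.} Because $|b_\lambda|=1$ on $\mathbb T$, each $f_n$ has the same modulus as $f_0$ on $\mathbb T$, so $\|f_n\|_{L^2(\mathbb T)}=\|f_0\|_{L^2(\mathbb T)}$. By Cauchy--Schwarz,
\[
|c(k,n)|\le\|f_n\|_{L^1(\mathbb T)}\le\|f_n\|_{L^2(\mathbb T)}=\|f_0\|_{H^2},
\]
so $\sup_{k,n}|c(k,n)|\le\|f_0\|_{H^2}<\infty$.

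\textbf{Conclusion.} All hypotheses of Theorem~\ref{thm:meyer} hold, so it applies to $\widehat\kappa$. It gives a Radon measure with locally finite support and quadratic growth, whose inverse Fourier transform is a sparse crystalline measure with $\delta(\alpha,\lambda)$-separated support.

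No step is a real obstacle. The only point needing care is that the recursion and the coefficient bound must hold for negative $n$ as well. There $f_n=\bar b_\lambda^{|n|}f_0$ is merely in $L^2(\mathbb T)$, not in $H^2$. This causes no difficulty, because every step above uses only the unimodularity of $b_\lambda$ on $\mathbb T$ and never analyticity of $f_n$.
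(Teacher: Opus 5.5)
Your proposal is correct and follows essentially the same route as the paper. It takes the recursion and the vanishing condition $c(k,n)=0$ for $kn<0$ from Lemma~\ref{lem:model-space-characterization}, obtains the uniform bound $|c(k,n)|\le\|f_0\|_{L^2(\mathbb T)}$ from the unimodularity of $b_\lambda$ on $\mathbb T$ (valid for negative $n$ as well), and then applies Theorem~\ref{thm:meyer} directly.
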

\begin{proof}
By Lemma~\ref{lem:model-space-characterization}, $f_0\in K_{zb_\lambda}$ implies $c(k,n)=0$ whenever $kn<0$. Moreover, since \(|b_\lambda|=1\) on \(\mathbb T\), for
all \(n\in\mathbb Z\),
\begin{align}\label{eq:BlaschkeBoundedness}
        |c(k,n)|
        =
        |\widehat{b_\lambda^n f_0}(k)|
        \le
        \|b_\lambda^n f_0\|_{L^2(\mathbb T)}
        =
        \|f_0\|_{L^2(\mathbb T)}<\infty.
\end{align}
Thus the assumptions of Theorem~\ref{thm:meyer} are met.
\end{proof}

\subsection{Generalized Meyer-type recursion schemes}\label{sec:gen-meyer-recursion}
In this section we examine Meyer's recursion mechanism at the level of general inner
functions. Given an inner function \(\theta\), we consider boundary-value
recursions of the form $f_n=\theta^n f_0$, $n\in\mathbb Z$,
and study which assumptions on the initial conditions \(f_0\) ensure that the
projected Fourier series defines a Radon measure, a tempered distribution, or
a measure with locally finite support. We begin by extending Lemma~\ref{lem:model-space-characterization}. We deliberately extend the set of values of $\alpha$ in our analysis.
\begin{thm}\label{thm:inner-model-space-tempered}
Let \(\theta\) be an inner function, let \(f_0\in K_{z\theta}=H^2(\mathbb D)\ominus z\theta H^2(\mathbb D)\), and let \(\alpha>0\). For \(n\in\mathbb Z\), define $
f_n=\theta^n f_0$
via boundary values on \(\mathbb T\), and set
\[
c(k,n)=\widehat{f_n}(k)=\widehat{\theta^n f_0}(k),\qquad k,n\in\mathbb Z.
\]
Then the formal series
\[
\widehat\kappa_{\theta}
=
\sum_{k,n\in\mathbb Z} c(k,n)\,\delta_{k+\alpha n}
\]
defines a purely atomic Radon measure with locally finite support and at most quadratic growth, hence \(\widehat\kappa_{\theta}\) defines a tempered distribution.
\end{thm}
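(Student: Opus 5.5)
The plan is to repeat the first two steps of the proof of Theorem~\ref{thm:meyer}, with the Blaschke factor replaced by $\theta$. Two facts about the array $c(k,n)=\widehat{\theta^n f_0}(k)$ are needed: it vanishes whenever $kn<0$, and it is uniformly bounded. Once these hold, the counting argument from Theorem~\ref{thm:meyer} gives local finiteness and quadratic growth directly. It uses no recursion, and no irrationality of $\alpha$ beyond $\alpha>0$.

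\emph{Step 1: vanishing.} For $n\ge 0$, $\theta^n f_0\in H^2(\mathbb D)$ because $\theta^n\in H^\infty(\mathbb D)$. Hence $c(k,n)=0$ for $k<0$ and $n\ge0$. For $n=-m$ with $m\ge1$, I adapt the proof of Lemma~\ref{lem:model-space-characterization}. The condition $f_0\in K_{z\theta}$ means that for all $j\ge0$
\[
0=\langle f_0,z\theta z^j\rangle_{L^2(\mathbb T)}=\widehat{\bar\theta f_0}(j+1),
\]
so $\bar\theta f_0$ has no Fourier coefficients at strictly positive frequencies. To pass to higher powers, write $\bar\theta^{\,m}f_0=\bar\theta^{\,m-1}(\bar\theta f_0)$. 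Here $\bar\theta^{\,m-1}\in\overline{H^\infty}$ has spectrum in $(-\infty,0]$, and $\bar\theta f_0\in\overline{H^2}$ has spectrum in $(-\infty,0]$. The product of an $\overline{H^\infty}$ function and an $\overline{H^2}$ function lies in $\overline{H^2}$: conjugate, and use that $H^\infty\cdot H^2\subset H^2$. Therefore $c(k,-m)=0$ for all $k>0$ and $m\ge1$, so $c(k,n)=0$ whenever $kn<0$.

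\emph{Step 2: boundedness.} Since $|\theta|=1$ a.e.\ on $\mathbb T$, the estimate \eqref{eq:BlaschkeBoundedness} applies verbatim:
\[
|c(k,n)|\le\|\theta^nf_0\|_{L^2(\mathbb T)}=\|f_0\|_{L^2(\mathbb T)}.
\]

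\emph{Step 3: counting.} On the support of $c$, the signs of $k$ and $n$ agree (or one of them is zero). Since $\alpha>0$, this gives $|k+\alpha n|=|k|+\alpha|n|$. Hence
\[
\sum_{|k+\alpha n|\le R}|c(k,n)|\le \|f_0\|_{L^2}\,\#\{(k,n):|k|+\alpha|n|\le R\}=O(R^2).
\]
The set being counted is finite, so only finitely many pairs $(k,n)$ project into any bounded interval. The support of $\widehat\kappa_\theta$ is the closure of the set of projected points carrying nonzero mass, and this set is locally finite.

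One subtlety arises because $\alpha$ may be rational here. Distinct pairs can then project to the same point, so the coefficient at a point $s$ is $\sum_{k+\alpha n=s}c(k,n)$. This is a finite sum by the counting above, so the measure is still a well-defined purely atomic measure; cancellations only shrink the support. Polynomial growth of $|\widehat\kappa_\theta|([-R,R])$ then gives a tempered distribution, by pairing with Schwartz functions through dyadic shells.

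The only step requiring care is the product-of-spectra argument in Step~1 for a general inner $\theta$. It needs integrability: $\bar\theta^{\,m-1}$ is bounded and $\bar\theta f_0\in L^2$, which suffices. The rest is bookkeeping.
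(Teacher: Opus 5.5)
Your proposal is correct and follows the paper's proof essentially step for step. The steps match: vanishing of $c(k,n)$ for $kn<0$ via $\widehat{\overline{\theta}f_0}(k)=0$ for $k>0$, iterated over powers of $\overline{\theta}$; the uniform bound $|c(k,n)|\le\|f_0\|_{L^2(\mathbb T)}$; and the $O(R^2)$ lattice count, including the remark on rational $\alpha$. Your justification of the iteration through $H^\infty\cdot H^2\subset H^2$ after conjugation only makes explicit what the paper states as ``multiplying by further powers of $\overline{\theta}$''.
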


\begin{proof}
As in Lemma~\ref{lem:model-space-characterization}, \(f_0\in K_{z\theta}\) implies $\widehat{\overline{\theta}\,f_0}(k)=0$, for all $k>0$. Multiplying by further powers of \(\overline{\theta}\), it follows that for every \(m\geq 1\), $\widehat{\overline{\theta}^{\,m}f_0}(k)=\widehat{{\theta}^{-m}f_0}(k)=0$, $k>0$. On the other hand, for \(n\geq 0\) one has \(f_n=\theta^n f_0\in H^2(\mathbb D)\), so $\widehat{f_n}(k)=0$ for $k<0$. Hence \(c(k,n)=0\) whenever \(kn<0\) and therefore $|k+\alpha n|=|k|+\alpha|n|$. Thus, only finitely many pairs \((k,n)\in\mathbb Z^2\) can contribute to an interval \([-R,R]\). Since \(|\theta|=1\) almost everywhere on \(\mathbb T\) for every \(n\in\mathbb Z\) we have
$|c(k,n)|
\le \|f_n\|_{L^2(\mathbb T)}
=
\|f_0\|_{H^2}$.
It follows that
\[
|\widehat\kappa_{\theta}|([-R,R])
\leq
\sum_{|k+\alpha n|\le R} |c(k,n)|
\le
\|f_0\|_{H^2}\,
\#\Bigl\{(k,n)\in\mathbb Z^2:\ |k|+\alpha|n|\le R\Bigr\}.
\]
If \(\alpha\) is rational, several pairs \((k,n)\) may project to the same
point \(k+\alpha n\), but only
finitely many such contributions occur in every compact interval. Thus $\widehat\kappa_{\theta}$ defines a purely atomic Radon measure.
The growth of the right hand side is $\mathcal O(R^2)$ and therefore {$\hat \kappa_{\theta}$} defines a tempered distribution.
\end{proof}

For each \(\zeta\in\mathbb T\), Aleksandrov--Clark theory applied to the
inner function \(z\theta\) gives a finite positive measure
\(\sigma^{z\theta}_\zeta\) on \(\mathbb T\) and a unitary Clark parametrization
\[
C^{z\theta}_\zeta:L^2(\sigma^{z\theta}_\zeta)\longrightarrow K_{z\theta};
\]
see \cite{Clark1972,Saksman2007}. We use this to parametrize admissible
initial conditions.

\begin{corollary}[Clark parametrization of Meyer-type measures]\label{cor:clark}
Let $\theta$ be inner, let $\alpha>0$, fix $\zeta\in\mathbb T$, and let
$\sigma_\zeta^{z\theta}$ be the Aleksandrov-Clark measure of $z\theta$.
For $g_0\in L^2(\sigma_\zeta^{z\theta})$, set $f_0=\mathcal C_\zeta^{z\theta}g_0\in K_{z\theta}$ and define
\[
\widehat\kappa_{\theta,\alpha,\zeta,g_0}
=
\sum_{k,n\in\mathbb Z}
\widehat{\theta^n f_0}(k)\delta_{k+\alpha n}.
\]
Then $\widehat\kappa_{\theta,\alpha,\zeta,g_0}$ is a purely atomic Radon
measure with locally finite support and at most quadratic growth. In particular,
it defines a tempered distribution. 

If moreover \(\theta=B\) is a finite Blaschke product then the Clark
parametrization of \(K_{zB}\) is explicit.
In this case
\[
\sigma^{zB}_\zeta
=
\sum_{\xi B(\xi)=\zeta}
\frac{1}{|(zB)'(\xi)|}\,\delta_\xi,
\]
and every admissible initial condition \(f_0\in K_{zB}\) obtained from this
parametrization has the finite form
\[
f_0(w)
=
\bigl(1-\overline{\zeta}\,wB(w)\bigr)
\sum_{\xi B(\xi)=\zeta}
\frac{g_0(\xi)}{1-\overline{\xi}w}\,
\frac{1}{|(zB)'(\xi)|}.
\]

Moreover, if $\alpha\notin\mathbb Q$, then the mapping is injective:
\[
g_0\in L^2(\sigma_\zeta^{z\theta})
\longmapsto
\widehat\kappa_{\theta,\alpha,\zeta,g_0}.
\]

\end{corollary}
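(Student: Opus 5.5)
The plan has three parts, one for each assertion of the corollary. For the first assertion, I use only that the Clark operator maps $L^2(\sigma_\zeta^{z\theta})$ into $K_{z\theta}$. Indeed, $\mathcal C_\zeta^{z\theta}$ is unitary onto $K_{z\theta}$, so $f_0=\mathcal C_\zeta^{z\theta}g_0\in K_{z\theta}$ and $\|f_0\|_{H^2}=\|g_0\|_{L^2(\sigma_\zeta^{z\theta})}$. Theorem~\ref{thm:inner-model-space-tempered} then applies verbatim and gives a purely atomic Radon measure with locally finite support. It also gives the bound $|\widehat\kappa|([-R,R])\le \|g_0\|_{L^2(\sigma)}\,\#\{|k|+\alpha|n|\le R\}=O(R^2)$, hence a tempered distribution.

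For the finite Blaschke product case, I rely on the standard facts of Clark theory for the inner function $u=zB$, which has degree $d+1$ if $B$ has degree $d$. The Clark measure is $\sigma_\zeta=\sum_{u(\xi)=\zeta}|u'(\xi)|^{-1}\delta_\xi$. The equation $u=\zeta$ has exactly $d+1$ simple solutions on $\mathbb T$, because $|u'|=\sum(1-|a_j|^2)/|\xi-a_j|^2+1>0$ there. The Clark operator is given by the normalized Cauchy transform
\[
(\mathcal C_\zeta g)(w)=(1-\overline\zeta u(w))\int\frac{g(\xi)}{1-\overline\xi w}\,d\sigma_\zeta(\xi).
\]
Inserting the atomic measure turns the integral into the displayed finite sum. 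I would cite \cite{Clark1972,Saksman2007} for both the measure and the operator formula rather than rederive them. If needed, the atom weights can be confirmed from the Herglotz identity $\frac{\zeta+u}{\zeta-u}=\int\frac{\xi+w}{\xi-w}d\sigma_\zeta$ by computing residues at the simple poles $\xi$, where $u(\xi)=\zeta$. The residue computation gives mass $\zeta/(\xi u'(\xi))$, and this equals $1/|u'(\xi)|$ because $\xi u'(\xi)/u(\xi)=|u'(\xi)|>0$ on $\mathbb T$.

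For injectivity with $\alpha\notin\mathbb Q$, the map $(k,n)\mapsto k+\alpha n$ is injective on $\mathbb Z^2$, so no two coefficients are merged by the projection. The mass of $\widehat\kappa$ at the point $k+\alpha n$ is therefore exactly $c(k,n)$. This mass is well defined because, by the first part, the support is locally finite and each atom is isolated. It follows that $\widehat\kappa_{\theta,\alpha,\zeta,g_0}=\widehat\kappa_{\theta,\alpha,\zeta,g_0'}$ forces $c(k,0)=c'(k,0)$ for all $k$. Then $\widehat{f_0}=\widehat{f_0'}$, so $f_0=f_0'$ in $H^2$. Since $\mathcal C_\zeta^{z\theta}$ is unitary, hence injective, this gives $g_0=g_0'$ in $L^2(\sigma_\zeta^{z\theta})$. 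Linearity of $g_0\mapsto\widehat\kappa$ would let me phrase the argument as a trivial-kernel check instead, which is equivalent.

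The main obstacle is the explicit finite-Blaschke formula. Its correctness depends on the normalization conventions for $\sigma_\zeta$ and $\mathcal C_\zeta$, specifically the conjugations $\overline\zeta u(w)$ and $\overline\xi w$ and the weights $1/|u'(\xi)|$. I would fix these by checking the formula against the reproducing kernel: for $g_0\equiv1$, the output should be $\mathcal C_\zeta 1=(1-\overline\zeta u(w))\cdot\frac{1}{1-\overline\zeta u(w)}\cdot(\text{const})$, that is, a constant multiple of the kernel $k_0^{u}$ of $K_u$. The other two parts are immediate from Theorem~\ref{thm:inner-model-space-tempered} and coefficient identification.
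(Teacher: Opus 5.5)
Your proposal is correct and follows the paper's proof step by step: $f_0=\mathcal C_\zeta^{z\theta}g_0\in K_{z\theta}$ together with Theorem~\ref{thm:inner-model-space-tempered} gives the first part, the standard Clark measure and normalized Cauchy transform formulas for $zB$ give the finite case, and injectivity follows by recovering $\widehat{f_0}$ from the row $n=0$ (using injectivity of $(k,n)\mapsto k+\alpha n$) and then unitarity of the Clark map. Your residue check that the atom weights equal $1/|(zB)'(\xi)|$, via $\xi u'(\xi)/u(\xi)=|u'(\xi)|$ on $\mathbb T$, is a correct addition to what the paper simply cites.
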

If $\alpha\in\mathbb Q$, the Clark parametrization of the admissible
initial conditions remains valid, but injectivity of the projected measure is not
automatic, since distinct lattice points $(k,n)$ may project to the same
point $k+\alpha n\in\mathbb R$.
\begin{proof}[Proof of Corollary~\ref{cor:clark}]
The Clark transform
\[
\mathcal C_\zeta^{z\theta}:L^2(\sigma_\zeta^{z\theta})\to K_{z\theta}
\]
is unitary. Hence $f_0=\mathcal C_\zeta^{z\theta}g_0$ is an admissible
initial condition in Theorem~\ref{thm:inner-model-space-tempered}, which gives the
Radon property, local finiteness, quadratic growth, and temperedness.

When \(\theta = B\) is a finite Blaschke product, the displayed finite formula
follows from the standard Clark measure formula applied to the inner function
\(zB\), together with the finite atomic form of the Clark parametrization.

If $\alpha\notin\mathbb Q$, then $(k,n)\mapsto k+\alpha n$ is injective
on $\mathbb Z^2$. Thus the projected measure determines every coefficient
$\widehat{\theta^n f_0}(k)$ separately. In particular, the row $n=0$
recovers the Fourier coefficients of $f_0$, and therefore recovers $f_0$.
Since $\mathcal C_\zeta^{z\theta}$ is unitary, $g_0$ is then uniquely
determined. 
\end{proof}

Next we study exponentially decaying arrays. We provide conditions when such arrays yield a Radon measure and a tempered distribution.
\begin{lemma}\label{lem:radon-tempered-projected-tail}
Let \(\alpha\in\mathbb R\), and let \(c=(c(k,n))_{k,n\in\mathbb Z}\) be a
coefficient array. Assume that:

\begin{enumerate}
\item There exist \(A>0\) and \(M\ge0\) such that
\[
        |c(k,n)|
        \le A(1+|k|+|n|)^M ,
        \qquad k,n\in\mathbb Z .
\]

\item There exist \(\varepsilon,C,\rho>0\) such that if $\left|\frac{k}{n}+\alpha\right|<\varepsilon$ and $n\neq0$ then
\[
        |c(k,n)|
        \le C e^{-\rho |n|}
\]
\end{enumerate}
Then the formal series
\[
\widehat\kappa
=
\sum_{k,n\in\mathbb Z} c(k,n)\,\delta_{k+\alpha n}
\]
defines a purely atomic Radon measure of at most polynomial growth
\[
        |\widehat\kappa|([-R,R])=\mathcal{O}(R^{M+2})\ \text{as}\ R\rightarrow\infty,
\]
and hence a tempered distribution. If $\alpha=p/q$ is rational, then $\hat \kappa$ has uniformly discrete support $\text{supp}(\hat\kappa)\subset\frac{1}{q}\mathbb Z$. 
\end{lemma}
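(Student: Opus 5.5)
The plan is to bound $\sum_{|k+\alpha n|\le R}|c(k,n)|$ directly by splitting the lattice $\mathbb Z^2$ into two regions: one where the projection $k+\alpha n$ controls the size of $(k,n)$, and one where it does not but exponential decay is available. Fix $R\ge1$ and consider pairs with $|k+\alpha n|\le R$.

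\emph{Region I: pairs with $|n|\le N(R)$, including $n=0$.} Here $|k|\le R+|\alpha|N(R)$, so there are $O(N(R)(R+N(R)))$ pairs. Each has $|c(k,n)|\le A(1+R+(1+|\alpha|)N(R))^M$. Choosing $N(R)$ comparable to $R$ gives a contribution of order $O(R^{M+2})$.

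\emph{Region II: pairs with $|n|>N(R)$.} Then $|k/n+\alpha|=|k+\alpha n|/|n|\le R/|n|<\varepsilon$ as soon as $N(R)\ge R/\varepsilon$. So I take $N(R)=\lceil R/\varepsilon\rceil$, which is still $O(R)$ and keeps the Region I estimate intact. In Region II hypothesis (2) gives $|c(k,n)|\le Ce^{-\rho|n|}$. For fixed $n$ the number of admissible $k$ is at most $2R+1$. Hence the contribution is at most
\[
(2R+1)\sum_{|n|>N(R)}Ce^{-\rho|n|}\le C'(2R+1)e^{-\rho R/\varepsilon},
\]
which is bounded.

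Summing the two regions gives $|\widehat\kappa|([-R,R])\le\sum_{|k+\alpha n|\le R}|c(k,n)|=O(R^{M+2})$. This finiteness on every bounded interval shows that the series, regrouped by atoms (several pairs may land on the same point), defines a purely atomic Radon measure. The series converges absolutely on compacts, so regrouping is harmless. Polynomial growth of $|\widehat\kappa|$ then gives a tempered distribution by the standard criterion recalled in Section~\ref{sec:prelim}: integrate against a Schwartz function over dyadic shells $2^j\le|x|<2^{j+1}$.

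For rational $\alpha=p/q$, every point $k+pn/q=(qk+pn)/q$ lies in $\frac1q\mathbb Z$. Thus all atoms, and hence the support (which is closed, since it sits inside a closed discrete set), lie in $\frac1q\mathbb Z$, which is uniformly discrete with separation $1/q$.

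The only delicate point is the choice of $N(R)$. It must be large enough, $N(R)\ge R/\varepsilon$, so that the cone condition of (2) applies in Region II, yet only linear in $R$ so that Region I yields the exponent $M+2$. Taking $N(R)=\lceil R/\varepsilon\rceil$ meets both requirements, and the case $\alpha=0$ causes no trouble.
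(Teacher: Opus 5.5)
Your proof is correct and follows the paper's argument essentially step for step: you use the same split at $|n|\le R/\varepsilon$ versus $|n|>R/\varepsilon$, with $\mathcal O(R^2)$ pairs each bounded by $\mathcal O(R^M)$ in the first range, and the cone hypothesis plus at most about $2R$ values of $k$ per row in the second. The rational case is handled by the same observation $k+pn/q\in\frac1q\mathbb Z$. The only cosmetic differences are that you read off local finiteness of the mass directly from the growth bound instead of proving it separately, and you note that the tail range contributes $\mathcal O(1)$ rather than the paper's cruder $\mathcal O(R)$.
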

\begin{proof}
We first prove that the series defines a purely atomic Radon measure by showing that \(\sum_{k+\alpha n\in I}|c(k,n)|<\infty\) for every compact interval \(I\subset\mathbb R\). If \(R_I>0\) is such that \(I\subset[-R_I,R_I]\) 
\[
        \left|\frac{k}{n}+\alpha\right|
        =
        \frac{|k+\alpha n|}{|n|}
        \le
        \frac{R_I}{|n|}.
\]
Hence, for \(|n|>R_I/\varepsilon\), the exponential decay of tails gives $|c(k,n)|\le C e^{-\rho |n|}$ and for fixed \(n\), there are at most {$2R_I+2$} values of \(k\) such that
\(k+\alpha n\in I\). It follows
\begin{align}\label{eq:tailestimate}
        \sum_{\substack{k+\alpha n\in I\\ |n|>R_I/\varepsilon}}
        |c(k,n)|
        \le
        C{(2R_I+2)}\sum_{|n|>R_I/\varepsilon}e^{-\rho |n|}
        <\infty .
\end{align}
The remaining values of \(n\) form a finite set, and for each such \(n\) only
finitely many \(k\) satisfy \(k+\alpha n\in I\), so that $\sum_{\substack{k+\alpha n\in I}}
        |c(k,n)|<\infty$. Consequently the formal series $\widehat\kappa$ defines a purely atomic Radon measure. If \(\alpha=p/q\) then $k+\alpha n=\frac{qk+pn}{q}\in q^{-1}\mathbb Z$, so that \(\operatorname{supp}(\widehat\kappa)\subset q^{-1}\mathbb Z\). To prove polynomial growth we consider the intervals $|n|\le R/\varepsilon$ and $|n|> R/\varepsilon$. In the first range, \( |k+\alpha n|\le R \) implies $|k|\le R+|\alpha| |n|$ so that there are \(\mathcal{O}(R^2)\) admissible pairs \((k,n)\), and for each of them
\[
        |c(k,n)|
        \le
        A(1+|k|+|n|)^M
        \le
        C R^M,
\]
which yields an overall contribution of \(\mathcal{O}(R^{M+2})\). In the second range, $\left|\frac{k}{n}+\alpha\right| <\varepsilon$ and following Equation~\eqref{eq:tailestimate} we find that the contribution is $\mathcal{O}(R)$.
Combining the two estimates yields $|\widehat\kappa|([-R,R])
        \le
        C'(1+R)^{M+2}$.
\end{proof}
We apply this result in the context of inner functions.
\begin{thm}\label{thm:inner-model-annulus}
Let \(\theta\) be an inner function, let \(f_0\) be holomorphic in an annulus $r<|z|<R$ with $r<1<R$, and let \(\alpha>0\). For \(n\in\mathbb Z\), define $
f_n=\theta^n f_0$
via boundary values on \(\mathbb T\), and set
\[
c(k,n)=\widehat{f_n}(k)=\widehat{\theta^n f_0}(k),\qquad k,n\in\mathbb Z.
\]
Then the formal series
\[
\widehat\kappa_{\theta}
=
\sum_{k,n\in\mathbb Z} c(k,n)\,\delta_{k+\alpha n}
\]
defines a purely atomic Radon measure of at most quadratic growth, hence \(\widehat\kappa_{\theta}\) defines a tempered distribution. If $\alpha=p/q$ is rational, then $\hat \kappa_\theta$ has uniformly discrete support $\text{supp}(\hat\kappa_\theta)\subset\frac{1}{q}\mathbb Z$.
\end{thm}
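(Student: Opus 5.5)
The plan is to reduce Theorem~\ref{thm:inner-model-annulus} to Lemma~\ref{lem:radon-tempered-projected-tail} with $M=0$. Hypothesis (1) is immediate: since $|\theta|=1$ almost everywhere on $\mathbb T$,
\[
|c(k,n)|\le\|\theta^nf_0\|_{L^2(\mathbb T)}=\|f_0\|_{L^2(\mathbb T)}
\]
for all $k,n\in\mathbb Z$. The lemma then gives a purely atomic Radon measure with $|\widehat\kappa_\theta|([-R,R])=\mathcal O(R^{2})$, hence a tempered distribution. It also gives the statement $\operatorname{supp}(\widehat\kappa_\theta)\subset\frac1q\mathbb Z$ for rational $\alpha=p/q$. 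So the whole task is to verify hypothesis (2). This is an exponential bound $|c(k,n)|\le Ce^{-\rho|n|}$ in a cone $|k/n+\alpha|<\varepsilon$. In that cone $k$ and $n$ have opposite signs and $|k|\ge(\alpha-\varepsilon)|n|$.

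To get this bound I will use the Laurent decomposition of $f_0$ in the annulus. Write $f_0=f_++f_-$, where $f_+=\sum_{j\ge0}a_jz^j$ is holomorphic in $|z|<R$ and $f_-=\sum_{j<0}a_jz^j$ is holomorphic in $|z|>r$. Fix $r<r'<1<R'<R$. Cauchy estimates give $|a_j|\le C_0R'^{-j}$ for $j\ge0$ and $|a_j|\le C_0 r'^{|j|}$ for $j<0$.

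\emph{Case $n\ge0$, $k<0$.} The product $\theta^nf_+$ lies in $H^2(\mathbb D)$, so it contributes nothing at negative frequencies. For $f_-$, expand $\theta^nf_-=\sum_{j<0}a_j\,\theta^nz^j$ in $L^2(\mathbb T)$; this converges since $\theta^n$ is unimodular and $(a_j)$ is summable. Because $\theta^n$ is analytic, $\theta^nz^j$ has Fourier support in $[j,\infty)$, and each of its coefficients is bounded by $\|\theta^n\|_{L^2}=1$. Hence
\[
|c(k,n)|\le\sum_{j\le k}|a_j|\le C_1 r'^{|k|}.
\]

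\emph{Case $n<0$, $k>0$.} This is symmetric. Now $\theta^n=\overline{\theta}^{\,|n|}$ has Fourier support in $(-\infty,0]$, so $\theta^nf_-$ has no positive frequencies. For the other term, $\theta^nz^j$ has Fourier support in $(-\infty,j]$, which gives
\[
|c(k,n)|\le\sum_{j\ge k}|a_j|\le C_1R'^{-k}.
\]

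In both cases $|c(k,n)|\le C_1 s^{|k|}$ with $s=\max(r',1/R')<1$. I then choose $\varepsilon=\alpha/2$. In the cone this gives $|k|\ge\frac{\alpha}{2}|n|$, so $|c(k,n)|\le C_1e^{-\rho|n|}$ with $\rho=\frac{\alpha}{2}\log(1/s)>0$, which is hypothesis (2).

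The only delicate point is justifying the termwise Fourier-support argument: interchanging the Laurent series with multiplication by $\theta^{\pm n}$, and controlling the coefficients of $\theta^nz^j$ uniformly in $n$. This is handled by absolute summability of the Laurent coefficients together with $|\widehat{g}(k)|\le\|g\|_{L^2}$, so I do not expect a real obstacle. Note that positivity of $\alpha$ is what places the cone in the quadrants $kn<0$, where this analyticity argument applies.
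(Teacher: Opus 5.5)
Your proposal is correct and follows the paper's proof. Both arguments reduce to Lemma~\ref{lem:radon-tempered-projected-tail} with $M=0$ via the bound $|c(k,n)|\le\|f_0\|_{L^2(\mathbb T)}$. Both then verify the cone hypothesis by combining the one-sided Fourier support of $\theta^{n}$ (for $n>0$) and of $\overline{\theta}^{\,|n|}$ (for $n<0$) with the exponential decay of the Laurent coefficients of $f_0$. The only difference is bookkeeping: the paper splits the convolution $\sum_\ell\widehat{\theta^n}(k-\ell)\widehat{f_0}(\ell)$ at $|\ell|\le\delta|n|$, whereas you split by the sign of $\ell$ and use the exact constraint $\ell\le k$ (resp.\ $\ell\ge k$), which yields the slightly sharper bound $|c(k,n)|\le C s^{|k|}$.
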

\begin{proof}
We verify the hypotheses of Lemma~\ref{lem:radon-tempered-projected-tail}. \emph{(1)} Since \(|\theta|=1\) a.e.~on \(\mathbb T\), for
all \(n\in\mathbb Z\), it follows as in Equation~\eqref{eq:BlaschkeBoundedness} that $|c(k,n)|\leq\|f_0\|_{L^2(\mathbb T)}$.
Thus the first assumption of
Lemma~\ref{lem:radon-tempered-projected-tail} holds with \(M=0\). \emph{(2)} 
We have that
\[
        c(k,n)
        =
        \widehat{\theta^n f_0}(k)
        =
        \sum_{\ell\in\mathbb Z}
        \widehat{\theta^n}(k-\ell)\widehat{f_0}(\ell).
\]
Because \(f_0\) is holomorphic in an annulus around \(\mathbb T\), there exist constants \(A,\eta>0\)
such that $|\widehat{f_0}(\ell)|
        \le
        A e^{-\eta|\ell|}$, $\ell\in\mathbb Z$. Choose $\varepsilon,\delta$ so that $\abs{\varepsilon+\delta}<\alpha$. Let \(n\neq0\) and suppose that $\left|\frac{k}{n}+\alpha\right|<\varepsilon$. Split the preceding convolution according to
\[
        \frac{|\ell|}{|n|}\le \delta 
        \qquad\text{and}\qquad
        \frac{|\ell|}{|n|}>\delta.
\]
If \(|\ell|/|n|\le \delta \), then
\[
        \frac{k-\ell}{n}
        =
        \frac{k}{n}-\frac{\ell}{n}<-\alpha+\varepsilon+\delta<0
\]
and by this choice $\widehat{\theta^n}(k-\ell)=0$. For the complementary range, the elementary estimate $|\widehat{\theta^n}(k-l)|\le 1$ yields
\[
        \sum_{|\ell|>\delta |n|}
        |\widehat{\theta^n}(k-\ell)|\,|\widehat{f_0}(\ell)|
        \le
        \sum_{|\ell|>\delta |n|} A e^{-\eta|\ell|}
        \le
        C e^{-\eta\delta |n|}.
\]
Combining the two estimates verifies the second assumption of
Lemma~\ref{lem:radon-tempered-projected-tail}.
\end{proof}
The bottom line of this discussion is that assuming that \(f_0\) is holomorphic in an annulus yields exponential tails and hence Radon measures and temperedness, but not necessarily locally finite support. The model-space assumption \(f_0\in K_{z\theta}\) yields locally finite projected support.
\subsection{Crystalline measures generated by powers of a finite Blaschke product}
We now specialize to finite Blaschke products. Whenever $f_0$ is holomorphic in an annulus containing $T$, Meyer's recursion \(f_n=B^n f_0\), \(n\in\mathbb Z\), yields a Fourier pair of purely atomic Radon
measures defining tempered distribution. Since the projected support may not be locally
finite, this places the construction in the broader framework of pure point diffraction, see \cite{BaakeGrimm2012} and \cite{BaakeStrungaruTerauds2020}. In contrast, when $\alpha>0$ and $f_0\in K_{zB}=H^2\ominus zBH^2$, the resulting measure is crystalline. Using Lemma~\ref{lem:radon-tempered-projected-tail},
exponential tail estimates for the Fourier coefficients of $B^n$ allow one to enlarge the range
of admissible projection parameters beyond the assumption $\alpha>0$ in Section~\ref{sec:gen-meyer-recursion}.
Let
$$B(z)=\prod_{i=1}^d b_{\lambda_i}(z),\quad \{\lambda_i\}_{i=1}^d\subset\mathbb D\setminus\{0\},$$
be a Blaschke product of degree $d\geq1$ with $B(0)\neq0$. Then
\[
B(z)=\frac{P(z)}{Q(z)},
\qquad
P(z)=\sum_{j=0}^d p_j z^j,
\qquad
Q(z)=\sum_{j=0}^d q_j z^j,
\qquad q_0=1,
\]
where \(P\) and \(Q\) have no common zero in \(\overline{\mathbb D}\). Let \(f_0\in H^2(\mathbb D)\), define
\[
f_n=B^n f_0,\qquad n\in\mathbb Z,
\]
and set \(c(k,n)=\widehat{f_n}(k)\) for all \(k,n\in\mathbb Z\). Since \(Qf_{n+1}=Pf_n\), comparison of Fourier coefficients gives, for every \(k,n\in\mathbb Z\),
\begin{equation*}
\sum_{j=0}^d q_j\,c(k-j,n+1)-\sum_{j=0}^d p_j\,c(k-j,n)=0.
\end{equation*}
Thus the array \(c\) satisfies a finite convolution relation on \(\mathbb Z^2\), which can be written in the form $(a_B*c)(m,n)=0$, $m,n\in\mathbb Z$ with $
a_B(j,0)=q_j$, $a_B(j,1)=-p_j$, $0\le j\le d$, and \(a_B(i,j)=0\) otherwise. Projecting each $(k,n)\in\mathbb Z^2$ to $k+\alpha n\in\mathbb R$, yields a formal Dirac series $c\mapsto\widehat{\kappa}=\sum_{k,n}c(k,n)\delta_{k+\alpha n}$ and a finite atomic measure\begin{equation*}
\tau_{B,\alpha}
=
\sum_{j=0}^d q_j\,\delta_j-\sum_{j=0}^d p_j\,\delta_{j+\alpha},
\end{equation*}
with inverse Fourier transform 
\begin{equation}\label{eq:phiB}
\phi_{B,\alpha}(x)
=
\cF^{-1}[\tau_{B,\alpha}](x)
=
\sum_{j=0}^d q_j e^{2\pi i jx}
-
e^{2\pi i\alpha x}\sum_{j=0}^d p_j e^{2\pi i jx}.
\end{equation}

The Fourier coefficients $\widehat{B^n}(k)$ decay exponentially in magnitude outside a compact interval. More precisely, by Proposition~\ref{prop:exp-tails-Bn} in Appendix~\ref{app:finite-blaschke-coeff-bounds} there exists a compact interval $I$, such that for every compact interval \(J\subset\mathbb R\setminus I\) there exist
\(C=C_J,\rho=\rho_J>0\) so that
\[
        |\widehat{B^n}(k)|\le C e^{-\rho|n|}
\]
holds whenever \(n\neq0\) and \(k/n\in J\). 

\begin{thm}\label{thm:general-meyer} Let \(B\) be a finite Blaschke product with \(B(0)\neq0\), let {\(\alpha\in\mathbb R\)} and let \(f_0\) be holomorphic in an annulus \(r<|z|<R\) with \(r<1<R\). Suppose that \(-\alpha\) is strictly separated from \(I_B=[m_B,M_B]\), where
\[ m_B= \sum_{j=1}^d\frac{1-|\lambda_j|^2}{(1+|\lambda_j|)^2}, \qquad M_B= \sum_{j=1}^d\frac{1-|\lambda_j|^2}{(1-|\lambda_j|)^2}. \] 
For \(n\in\mathbb Z\) define \(f_n=B^n f_0\) via boundary values on \(\mathbb T\). Writing \(c(k,n)=\widehat{f_n}(k)\), the formal series \[ \widehat\kappa_B=\sum_{k,n\in\mathbb Z} c(k,n)\,\delta_{k+\alpha n} \] defines a purely atomic Radon measure of at most quadratic growth and hence a tempered distribution.

If $\alpha>-m_B$, then the inverse Fourier transform \(\kappa_B\) is a tempered distribution supported on a uniformly discrete set $\Lambda_B=supp(\kappa_B)$ that is contained
in the zero set $Z(\phi_{B,\alpha})$, cf.~Equation~\eqref{eq:phiB}. If \(x_k<x_{k+1}\) are consecutive points of \(Z(\phi_{B,\alpha})\), then \begin{equation*} \frac{1}{\alpha+M_B} \le x_{k+1}-x_k \le \frac{1}{\alpha+m_B}. \end{equation*} In particular \(\Lambda_B\) is \(1/(\alpha+M_B)\)-separated. 

If, in addition, $\widehat{\kappa}_B$ has locally finite support, then $\kappa_B$ is a sparse crystalline measure. This holds, in particular, if one of the following assumptions is satisfied:
\begin{enumerate} \item \(\alpha>0\) and \(f_0\in K_{zB}=H^2\ominus zBH^2\), \item \(\alpha>0\) and \(c(k,n)=0\) whenever \(kn<0\), \item \(\alpha\in\mathbb Q\). \end{enumerate}
\end{thm}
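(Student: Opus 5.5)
The plan has three parts: check the tail hypotheses of Lemma~\ref{lem:radon-tempered-projected-tail} for the Fourier-side claims; locate the zeros of $\phi_{B,\alpha}$ through the boundary phase of $B$ for the spacing claims; then rule out derivative terms and treat the three sufficient conditions.

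\emph{Fourier side.} I follow the proof of Theorem~\ref{thm:inner-model-annulus}. Hypothesis (1) of Lemma~\ref{lem:radon-tempered-projected-tail} holds with $M=0$ because $|c(k,n)|\le\|f_0\|_{L^2(\mathbb T)}$. For hypothesis (2), choose $\varepsilon,\delta>0$ so small that the compact interval $J=[-\alpha-\varepsilon-\delta,\,-\alpha+\varepsilon+\delta]$ misses $I_B$. This is possible since $-\alpha$ is strictly separated from $I_B$, and $I_B$ contains the interval $I$ of Proposition~\ref{prop:exp-tails-Bn}. I would check in the appendix that $I$ can indeed be taken to be $I_B$; this is a point to verify rather than assume. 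Write $c(k,n)=\sum_\ell \widehat{B^n}(k-\ell)\widehat{f_0}(\ell)$ and assume $|k/n+\alpha|<\varepsilon$. For $|\ell|\le\delta|n|$ we have $(k-\ell)/n\in J$, so Proposition~\ref{prop:exp-tails-Bn} gives $|\widehat{B^n}(k-\ell)|\le C_Je^{-\rho_J|n|}$. There are $O(|n|)$ such $\ell$, and $\widehat{f_0}$ is bounded, so this part is $O(|n|e^{-\rho_J|n|})$. For $|\ell|>\delta|n|$, the bounds $|\widehat{B^n}|\le1$ and $|\widehat{f_0}(\ell)|\le Ae^{-\eta|\ell|}$ give $O(e^{-\eta\delta|n|})$. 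Lemma~\ref{lem:radon-tempered-projected-tail} then yields a purely atomic Radon measure with $|\widehat\kappa_B|([-R,R])=O(R^2)$, hence a tempered distribution.

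\emph{Support and spacing of the zeros.} The recursion $Qf_{n+1}=Pf_n$ becomes $\widehat\kappa_B*\tau_{B,\alpha}=0$ and therefore $\kappa_B\,\phi_{B,\alpha}=0$. Here $\phi_{B,\alpha}$ is a bounded exponential polynomial all of whose derivatives are bounded, so it is a legitimate multiplier on $\mathcal S'$. Since $Q$ has no zeros on $\mathbb T$,
\[
\phi_{B,\alpha}(x)=Q(e^{2\pi ix})\bigl(1-e^{2\pi i\Phi(x)}\bigr),\qquad \Phi(x)=\alpha x+\tfrac{1}{2\pi}\psi(2\pi x),
\]
where $B(e^{it})=e^{i\psi(t)}$ and $\psi$ is a continuous branch of the argument. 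The standard phase-derivative formula gives
\[
\psi'(t)=\sum_{j=1}^d\frac{1-|\lambda_j|^2}{|e^{it}-\lambda_j|^2}\in[m_B,M_B].
\]
Hence $\Phi'(x)=\alpha+\psi'(2\pi x)\in[\alpha+m_B,\alpha+M_B]$. When $\alpha>-m_B$ this interval is positive, so $\Phi$ is strictly increasing and $Z(\phi_{B,\alpha})=\Phi^{-1}(\mathbb Z)$. Consecutive zeros satisfy $\Phi(x_{k+1})-\Phi(x_k)=1$, and the mean value theorem then gives
\[
\frac{1}{\alpha+M_B}\le x_{k+1}-x_k\le\frac{1}{\alpha+m_B}.
\]
So $Z(\phi_{B,\alpha})$ is uniformly discrete, and $\operatorname{supp}\kappa_B\subset Z(\phi_{B,\alpha})$.

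\emph{Excluding derivative terms.} By the structure theorem, near each zero $x_0$ the distribution $\kappa_B$ is a finite combination $\sum_{j\le m}a_j\delta_{x_0}^{(j)}$. Differentiating the factorization at a zero gives
\[
\phi_{B,\alpha}'(x_0)=-2\pi i\,\Phi'(x_0)\,Q(e^{2\pi ix_0})\neq0,
\]
so every zero is simple. In $\phi_{B,\alpha}\sum_{j\le m}a_j\delta_{x_0}^{(j)}$, the coefficient of $\delta_{x_0}^{(m-1)}$ is a nonzero multiple of $a_m\phi_{B,\alpha}'(x_0)$. Requiring this product to vanish forces $a_m=0$ whenever $m\ge1$, so by induction only Dirac masses remain. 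This makes $\kappa_B$ purely atomic with locally finite support, hence Radon. It is tempered as the inverse transform of a tempered distribution.

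\emph{Sufficient conditions.} If $\widehat\kappa_B$ also has locally finite support, Definition~\ref{def:crystallMeath} is satisfied, and uniform discreteness of $\Lambda_B$ gives sparsity. Under (1), Theorem~\ref{thm:inner-model-space-tempered} gives local finiteness. Under (2), the counting argument in the proof of Theorem~\ref{thm:meyer} gives it. Under (3), the rational clause of Lemma~\ref{lem:radon-tempered-projected-tail} gives $\operatorname{supp}\widehat\kappa_B\subset q^{-1}\mathbb Z$. In cases (1) and (2) the standing hypotheses are automatic: $m_B>0$, so $\alpha>0$ implies both $\alpha>-m_B$ and that $-\alpha<0<m_B$ is strictly separated from $I_B$.

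I expect the main obstacle to be the tail step: making sure that the interval of exponential decay in Proposition~\ref{prop:exp-tails-Bn} really is the complement of $I_B$, uniformly on a compact neighbourhood $J$ of $-\alpha$. Controlling the $O(|n|)$ convolution terms is routine by comparison. The exclusion of derivative terms is standard once the zeros are known to be simple.
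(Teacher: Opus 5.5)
Your proposal is correct and follows the paper's proof essentially step by step. The shared steps are: the tail hypothesis of Lemma~\ref{lem:radon-tempered-projected-tail} via the convolution split against Proposition~\ref{prop:exp-tails-Bn} (which is stated with exactly $[m_B,M_B]$, so the point you flag is settled); the phase function $\psi_B(\theta)+\alpha\theta$ with the mean value theorem for the spacing; and $\phi_{B,\alpha}'(x_k)=-2\pi i\,Q(e^{2\pi i x_k})(\psi_B'(2\pi x_k)+\alpha)\neq0$ to exclude derivative terms. Your explicit induction on the order of $\delta^{(j)}_{x_0}$ and your treatment of conditions (2) and (3), which the paper leaves implicit, are useful additions but do not change the route.
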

\begin{proof}
We first verify the hypotheses of Lemma~\ref{lem:radon-tempered-projected-tail}. It
then follows that \(\widehat\kappa_B\) defines a purely atomic tempered distribution
and that \(\phi_{B,\alpha}=\cF^{-1}[\tau_{B,\alpha}]\) is given by \eqref{eq:phiB}.

\emph{(1)} Since \(|B|=1\) on \(\mathbb T\), for all \(n\in\mathbb Z\), as in
Equation~\eqref{eq:BlaschkeBoundedness},
\[
        |c(k,n)|\le \|f_0\|_{L^2(\mathbb T)}.
\]
Thus the first assumption of Lemma~\ref{lem:radon-tempered-projected-tail} holds
with \(M=0\).

\emph{(2)} We have
\[
        c(k,n)=\widehat{B^n f_0}(k)
        =\sum_{\ell\in\mathbb Z}\widehat{B^n}(k-\ell)\widehat{f_0}(\ell).
\]
Because \(f_0\) is holomorphic in an annulus around \(\mathbb T\), there exist
constants \(A,\eta>0\) such that
\(|\widehat{f_0}(\ell)|\le A e^{-\eta|\ell|}\), \(\ell\in\mathbb Z\). By assumption there exist \(\varepsilon,\delta>0\) such that the closed
interval
\[
        J_{\varepsilon,\delta}
        =[-\alpha-\varepsilon-\delta,-\alpha+\varepsilon+\delta]
\]
is disjoint from \(I_B\), i.e.~$J_{\varepsilon,\delta}\subset\mathbb R\setminus [m_B,M_B]$.
Let \(n\neq0\) and suppose that \(\left|\frac{k}{n}+\alpha\right|<\varepsilon\).
Split the preceding convolution according to
\[
        \frac{|\ell|}{|n|}\le \delta
        \qquad\text{and}\qquad
        \frac{|\ell|}{|n|}>\delta.
\]
If \(|\ell|/|n|\le\delta\), then
\[
        \frac{k-\ell}{n}=\frac{k}{n}-\frac{\ell}{n}
        \in J_{\varepsilon,\delta}
\]
so that
\[
        |\widehat{B^n}(k-\ell)|\le C e^{-\rho|n|}
\]
Consequently,
\[
        \sum_{|\ell|\le\delta |n|}
        |\widehat{B^n}(k-\ell)|\,|\widehat{f_0}(\ell)|
        \le Ce^{-\rho|n|}\sum_{\ell\in\mathbb Z}|\widehat{f_0}(\ell)|
        \le C_2e^{-\rho_1|n|}.
\]
For the complementary range, the elementary estimate \(|\widehat{B^n}(k-\ell)|\le1\)
gives
\[
        \sum_{|\ell|>\delta |n|}
        |\widehat{B^n}(k-\ell)|\,|\widehat{f_0}(\ell)|
        \le \sum_{|\ell|>\delta |n|}Ae^{-\eta|\ell|}
        \le C_3e^{-\eta\delta |n|}.
\]
Combining the two estimates verifies the second assumption of
Lemma~\ref{lem:radon-tempered-projected-tail}.

Assume now that \(\alpha>-m_B\). We prove the stated bounds for the distance between two consecutive zeros of \(\phi_{B,\alpha}\).
Represent \(B\) on \(\mathbb T\) as
\[
        B(e^{i\theta})=e^{i\psi_B(\theta)},
        \qquad
        \psi_B(\theta+2\pi)=\psi_B(\theta)+2\pi d.
\]
Since \(B\) is finite, \(\psi_B\) is smooth and
\[
        \psi_B'(\theta)=
        \sum_{j=1}^d\frac{1-|\lambda_j|^2}{|e^{i\theta}-\lambda_j|^2},
        \qquad \theta\in\mathbb R,
\]
where the zeros are counted with multiplicity. By the definitions of \(m_B\) and
\(M_B\), this gives \(m_B\le \psi_B'(\theta)\le M_B\). Hence
\[
        F(\theta)=\psi_B(\theta)+\alpha\theta
\]
is strictly increasing, with \(m_B+\alpha\le F'(\theta)\le M_B+\alpha\).
Since \(Q(e^{2\pi ix})\neq0\) for \(x\in\mathbb R\), \(\phi_{B,\alpha}(x)=0\) is
equivalent to
\[
        B(e^{2\pi ix})=e^{-2\pi i\alpha x},
\]
and therefore
\[
        Z(\phi_{B,\alpha})=\{x\in\mathbb R:F(2\pi x)\in2\pi\mathbb Z\}.
\]
Since \(F\) is strictly increasing, for each \(k\in\mathbb Z\) there is a unique
solution \(x_k\) of \(F(2\pi x_k)=2\pi k\), and the zero set is locally finite. Computing derivatives and plugging in $x_k$ one finds
\[
        \phi_{B,\alpha}'(x_k)
        =-2\pi i Q(e^{2\pi ix_k})(\psi_B'(2\pi x_k)+\alpha),
\]
which has strictly positive modulus. As before, this implies that $\kappa_B$ contains no derivative of Dirac masses so that \(\kappa_B\) is a purely
atomic Radon measure. If \(x_k<x_{k+1}\) are consecutive zeros, then the mean value
theorem gives
\[
        2\pi=F' (\xi_k)(2\pi x_{k+1}-2\pi x_k)
\]
for some \(\xi_k\in(2\pi x_k,2\pi x_{k+1})\). The bounds on \(F'\) give the stated
separation estimates. If \(f_0\in K_{zB}\), then, as in the proof of
Theorem~\ref{thm:inner-model-space-tempered}, \(c(k,n)=0\) whenever \(kn<0\) so that
\(\{k+\alpha n:c(k,n)\neq0\}\) is locally finite. Then \(\kappa_B\) is a sparse
crystalline measure.
\end{proof}
{
By Lemma~\ref{lem:radon-tempered-projected-tail} with \(M=0\), as used in the proof of Theorem~\ref{thm:general-meyer}, one has
\[
        |\widehat\kappa_B|([-R,R])=\cO(R^2).
\]
To sharpen this quadratic growth bound, we first record two standard row estimates that will also be used in Section~\ref{sec:fourier-trunc-sampling}. A classical estimate of Blyudze--Shimorin~\cite{BlyudzeShimorin,SzehrZaroufJAM}, using for negative powers that $B^{-m}=\overline{B^m}$ on $\mathbb T$, gives a constant $C_B>0$ such that
\begin{align}\label{eq:Blyudze}
\sum_{q\in\mathbb Z}
|\widehat{B^n}(q)|
\le
C_B(1+|n|)^{1/2},
\qquad n\in\mathbb Z.
\end{align}
Moreover, since $f_0$ is holomorphic in an annulus around $\mathbb T$, its Fourier coefficients decay exponentially and are therefore summable. Young's inequality then yields a constant $C_{B,f_0}>0$ such that
\begin{equation}\label{eq:absoluteSum}
        \sum_{k\in\mathbb Z}|c(k,n)|\leq C_{B,f_0}(1+|n|)^{1/2},
        \qquad n\in\mathbb Z.
\end{equation}

\begin{corollary}\label{cor:kappahat-three-halves-growth}
Let \(B\) be a finite Blaschke product with \(B(0)\neq0\), let \(f_0\) be holomorphic in an annulus \(r_0<|z|<R_0\), where \(r_0<1<R_0\), and let \(\alpha\in\mathbb R\). Assume that
\[
        -\alpha\notin[m_B,M_B].
\]
Then
\[
        |\widehat\kappa_B|([-R,R])=\cO(R^{3/2}),
        \qquad R\to\infty.
\]
\end{corollary}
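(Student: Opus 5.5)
We prove the bound by splitting the sum defining $|\widehat\kappa_B|([-R,R])$ into two ranges of $n$, as in the proof of Lemma~\ref{lem:radon-tempered-projected-tail}, and replacing the crude pointwise bound $|c(k,n)|\le\|f_0\|_{L^2}$ by the row estimate \eqref{eq:absoluteSum}. Concretely,
\[
|\widehat\kappa_B|([-R,R])\le\sum_{|k+\alpha n|\le R}|c(k,n)|
=\sum_{|n|\le R/\varepsilon}\ \sum_{k:\,|k+\alpha n|\le R}|c(k,n)|
+\sum_{|n|> R/\varepsilon}\ \sum_{k:\,|k+\alpha n|\le R}|c(k,n)|,
\]
where $\varepsilon>0$ is the constant supplied by the proof of Theorem~\ref{thm:general-meyer}. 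Since $-\alpha\notin[m_B,M_B]$ and this interval is closed, $-\alpha$ is strictly separated from it. That proof therefore gives $\varepsilon,C,\rho>0$ with $|c(k,n)|\le Ce^{-\rho|n|}$ whenever $n\ne0$ and $|k/n+\alpha|<\varepsilon$.

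\emph{Tail range $|n|>R/\varepsilon$.} If $|k+\alpha n|\le R$, then $|k/n+\alpha|\le R/|n|<\varepsilon$, so the exponential bound applies. For fixed $n$ at most $2R+2$ values of $k$ occur. Hence this range contributes at most
\[
C(2R+2)\sum_{|n|>R/\varepsilon}e^{-\rho|n|}=\cO\bigl(R\,e^{-\rho R/\varepsilon}\bigr),
\]
which is $\cO(1)$ and in particular $\cO(R^{3/2})$.

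\emph{Main range $|n|\le R/\varepsilon$.} For each fixed $n$ we drop the constraint on $k$ and use \eqref{eq:absoluteSum}, which rests on the Blyudze--Shimorin bound \eqref{eq:Blyudze} and Young's inequality:
\[
\sum_{k}|c(k,n)|\le C_{B,f_0}(1+|n|)^{1/2}.
\]
Summing over $|n|\le R/\varepsilon$ gives at most
\[
C_{B,f_0}\sum_{|n|\le R/\varepsilon}(1+|n|)^{1/2}\le C'(1+R/\varepsilon)^{3/2}=\cO(R^{3/2}).
\]
Adding the two ranges yields the claim.

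The only delicate point is that the threshold in $n$ must be linear in $R$, since this is what turns the $n$-sum of $(1+|n|)^{1/2}$ into $R^{3/2}$. This is exactly what the tail hypothesis provides through the inequality $R/|n|<\varepsilon$. The other thing to confirm is that \eqref{eq:absoluteSum} holds uniformly for negative $n$ as well; this follows from $B^{-m}=\overline{B^m}$ on $\mathbb T$, as already noted before \eqref{eq:Blyudze}. With both points in place the argument is routine.
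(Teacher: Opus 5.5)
Your proposal is correct and follows the paper's proof essentially step for step. It uses the same split at $|n|\approx R/\varepsilon$, and the row estimate \eqref{eq:absoluteSum} gives $\sum_{|n|\le R/\varepsilon}(1+|n|)^{1/2}=\cO(R^{3/2})$ in the main range. The exponential off-band bound gives an $\cO(1)$ tail; the paper re-derives that bound inside the proof from Proposition~\ref{prop:exp-tails-Bn} and the decay of $\widehat f_0$, whereas you quote it from the proof of Theorem~\ref{thm:general-meyer}.
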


\begin{proof}
By \eqref{eq:absoluteSum},
\[
        \sum_{k\in\mathbb Z}|c(k,n)|
        \le C_{B,f_0}(1+|n|)^{1/2},
        \qquad n\in\mathbb Z.
\]
We now prove the exponential estimate used below for indices \(k,n\) such that
\(|k/n+\alpha|\) is small. Since \(-\alpha\notin[m_B,M_B]\), choose
\(\varepsilon>0\) such that
\[
        [-\alpha-2\varepsilon,-\alpha+2\varepsilon]
        \subset \mathbb R\setminus[m_B,M_B].
\]
By Proposition~\ref{prop:exp-tails-Bn} in
Appendix~\ref{app:finite-blaschke-coeff-bounds}, the multiplicative constant is
\(1\) for the coefficients of \(B^n\). Combining this estimate with the exponential
decay of the Fourier coefficients of \(f_0\) gives constants \(C,\rho>0\) such that
\[
        |c(k,n)|\le C e^{-\rho|n|}
\]
whenever \(n\neq0\) and \(|k/n+\alpha|<\varepsilon\). To see this, write
\[
        c(k,n)=\sum_{\ell\in\mathbb Z}\widehat f_0(\ell)\widehat{B^n}(k-\ell).
\]
The terms with \(|\ell|\le\varepsilon |n|\) are controlled by
Proposition~\ref{prop:exp-tails-Bn}, while the remaining terms are exponentially
small because \(\widehat f_0\) decays exponentially and \(|\widehat{B^n}(q)|\le1\).

We now split the mass of \(\widehat\kappa_B\) on \([-R,R]\) according to
\(|n|\le R/\varepsilon\) and \(|n|>R/\varepsilon\). In the first range,
\[
\sum_{|n|\le R/\varepsilon}\sum_{k\in\mathbb Z}|c(k,n)|
\le C\sum_{|n|\le R/\varepsilon}(1+|n|)^{1/2}
=\cO(R^{3/2}).
\]
In the second range, the condition \(|k+\alpha n|\le R\) implies
\[
        \left|\frac{k}{n}+\alpha\right|\le \frac{R}{|n|}<\varepsilon.
\]
Hence
\[
\sum_{\substack{|n|>R/\varepsilon\\ |k+\alpha n|\le R}} |c(k,n)|
\le C(2R+2)\sum_{|n|>R/\varepsilon} e^{-\rho|n|}
=\cO(1).
\]
Combining the two estimates gives the result.
\end{proof}

The exponent $3/2$ is sharp in general. Indeed, let $\lambda\in(0,1)$ \(B=b_\lambda\), $f_0=1$, and let \(\alpha>0\) be irrational. By Corollary~\ref{cor:l1-short} in Appendix~\ref{app:meyer-bulk-lower}, there exist \(0<a_1<a_2\), \(c>0\), and \(n_0\ge1\) such that, for every \(n\ge n_0\),
\[
        \sum_{a_1n\le k\le a_2n}
        \bigl|\widehat{b_\lambda^n}(k)\bigr|
        \ge c\sqrt n.
\]
Since irrationality of \(\alpha\) makes the projected points \(k+\alpha n\) distinct, there exists a constant \(c_1>0\) such that, for all sufficiently large \(R\),
\[
        |\widehat\kappa_B|([0,R])
        \ge c\sum_{n_0\le n\le R/(a_2+\alpha)}\sqrt n
        \ge c_1 R^{3/2}.
\]
Thus Corollary~\ref{cor:l1-short} provides the matching lower bound and proves that the exponent $3/2$ in Corollary~\ref{cor:kappahat-three-halves-growth} cannot be improved in general.
}
%
%
%

We illustrate the theorem with examples, showing how some of Meyer's constructions appear in our setting and providing quantitative sparsity estimates. Notice that the sparsity estimates of Theorem~\ref{thm:general-meyer} depend on the recursion scheme but not on the initial conditions. For fixed degree \(d\), choosing zeros deeper in the disk (makes \(\psi_B'\) flatter and) decreases \(M_B\) toward its minimal value \(d\), thereby improving the lower bound on separation.



%
%
\begin{example}[Meyer's examples \(\gamma_0,\gamma_1,\gamma_2\)]\label{exp:meyersGammas}
We recapitulate the crystalline measure constructions \(\gamma_0,\gamma_1,\gamma_2\) introduced by Meyer in \cite[Definitions~2-4]{Meyer}. All three arise from the recurrence \(f_n=b_{1/2}^nf_0\) with \(\alpha=\sqrt{2}\) but are characterized by different initial conditions. These are: \emph{1)} \(\gamma_0\) is characterized by \(c(0,0)=1\) and \(c(m,0)=0\) for \(m\neq0\) (corresponding to \(f_0=1\)), \emph{2)} \(\gamma_1\) is characterized by \(c(m,0)=(1/2)^m\) for \(m\ge0\) and \(c(m,0)=0\) for \(m\le-1\) (corresponding to \(f_0=\frac{1}{1-z/2}\)) and \emph{3)} \(\gamma_2\) is characterized by \(c(m,0)=(1/2)^{|m|}\) for \(m\le0\) and \(c(m,0)=0\) for \(m\ge1\) (corresponding to \(f_0=(1-z^{-1}/2)^{-1}\)).

For $\gamma_0$ and $\gamma_1$ the corresponding initial conditions lie in $K_{zb_1/2}$, so Corollary \ref{cor:firstMeyerCor} yields a sparse crystalline measure. On the other hand, the initial conditions for $\gamma_2$ are not in $H^2$ but they are holomorphic in an annulus around $\mathbb T$, so that Theorem~\ref{thm:general-meyer} applies and shows that $\kappa_B$ is a tempered distribution. Moreover, $\kappa_B$ is a sparse crystalline measure by Theorem~\ref{thm:general-meyer} point \emph{(2)} using that with \(\lambda=1/2\), one has \(b_\lambda^n f_0\in zH^2\) for \(n\ge1\) and $ b_\lambda^{-m}f_0$ has only non-positive Fourier frequencies for \(m\ge1\) (rather than exponential decay). Computation gives \(M_{b_{1/2}}=3\) and the separation constant\footnote{Meyer works with the angular variable \(t\in\mathbb R/2\pi\mathbb Z\), which results in an angular separation of \(2\pi\delta_{\mathrm{Meyer}}\)} $\delta_{{Meyer}}
        =
        \frac{1}{\sqrt2+3}
        \approx 0.2265$.
\end{example}
For a finite Blaschke product of degree \(d\), one always has
\[
\sup_{\theta}\psi_B'(\theta)\ge \frac{1}{2\pi}\int_0^{2\pi}\psi_B'(\theta)\,\diff\theta=d.
\]
In particular for fixed \(\alpha\) the separation bound delivered by
Theorem~\ref{thm:general-meyer} in degree \(d\) is at most \(1/(\alpha+d)\).
Hence for \(\alpha=\sqrt2\) no product of degree \(d\ge3\) can improve on the Meyer
benchmark \(1/(\sqrt2+3)\) at the level of this bound.
\begin{example}[A symmetric degree-\(d\) family]
Extending Meyer's recursion scheme for degree-1 Blaschke products, consider for $\lambda\in(0,1)$ the family
\[
        B_{d,\lambda}(z)
        =
        \frac{z^d-\lambda^d}{1-\lambda^d z^d}
        =
        \prod_{j=0}^{d-1} b_{\lambda e^{2\pi i j/d}}(z).
\]
The interest of this family is that extrema of the phase derivative can be computed explicitly,
\[
        \inf_\theta \psi'_{B_{d,\lambda}}(\theta)
        =
        d\,\frac{1-\lambda^d}{1+\lambda^d},
        \qquad
        \sup_\theta \psi'_{B_{d,\lambda}}(\theta)
        =
        d\,\frac{1+\lambda^d}{1-\lambda^d}.
\]
This yields
\[
        \frac{1}{\alpha+d\frac{1+\lambda^d}{1-\lambda^d}}
        \le
        x_{k+1}-x_k
        \le
        \frac{1}{\alpha+d\frac{1-\lambda^d}{1+\lambda^d}}.
\]
For fixed $d$ the lower bound $\bigl(\alpha+d(1+\lambda^d)/(1-\lambda^d)\bigr)^{-1}$ is decreasing in
$\lambda$ on $(0,1)$; its supremum is $1/(\alpha+d)$, approached as $\lambda\to0^+$.
Tuning $\lambda$ one thus can increase the separation, where for degree one and \(\alpha=\sqrt2\), one finds $\sup_{0<\lambda<1}\delta_{1,\lambda,\sqrt2}=\frac{1}{\sqrt2+1}>\delta_{Meyer}$ in Example~\ref{exp:meyersGammas}.

\end{example}

We conclude this section with a brief discussion of the decay of the coefficients $\widehat{b_\lambda^n}(k)$ in the so-called {central region} and point out a small correction to the discussion in \cite[Section~2, Lemma~2]{Meyer} - in fact the asymptotic behavior of $\widehat{b_\lambda^n}(k)$ over the entire range of $k/n$ is known, see e.g.~\cite{SzehrZaroufJAM,SzehrZaroufJAT2022,BFZCA,BFZIMRN}.

A standard approach to determine the asymptotics is to express the coefficients $\widehat{b_\lambda^n}(k)$ in terms of the Jacobi polynomials
with first varying parameter by \cite[Formula~(1.1)]{SzehrZaroufJAT2022} and then apply the asymptotic expansion formulas for Jacobi polynomials.

More precisely the central region is characterized by the inequalities
\[
        \alpha_0 n+\omega(n^{1/3})\le k\le
        \alpha_0^{-1}n-\omega(n^{1/3}),
        \qquad
        \alpha_0=\frac{1-\lambda}{1+\lambda},
\]
where $\omega(n^{1/3})/n^{1/3}\to+\infty$ and $\omega(n^{1/3})=o(n)$. This yields~\cite{SzehrZaroufJAM,SzehrZaroufJAT2022}
\[
        \widehat{b_\lambda^n}(k)
        =
        \sqrt{\frac{2}{n\pi}}
        \frac{\cos(nh(\varphi_+)-\pi/4)+\varepsilon_n(k)}
        {\sqrt{k/n}\,\bigl[(\alpha_0^{-1}-k/n)(k/n-\alpha_0)\bigr]^{1/4}},
\]
with $\varepsilon_n(k)\to0$ as $n\to\infty$ in this range and where representations of $\varphi_+$ and $h\neq0$ are provided in Appendix~\ref{app:asymptotic-formulas}. Notice that the leading term is
oscillatory in $n$, and there is no lower bound of order $n^{-1/2}$ for every index $k$ in the central range. 

To recover the conclusion of \cite{Meyer} that the corresponding Fourier-side measure is
not translation bounded one may follow the following line: By Lemma~\ref{lem:osc-robust-lower} in Appendix~\ref{app:meyer-bulk-lower} there are $0<a_1<a_2$, $\eta>0$, $c>0$, and sets
$E_n\subset [a_1n,a_2n]\cap\mathbb Z$, with $\#E_n\ge \eta n$, such that
\[
        |\widehat{b_\lambda^n}(k)|\ge cn^{-1/2}
\]
for all large $n$ and all $k\in E_n$. Since $\alpha$ is irrational in~\cite{Meyer} the points
$k+\alpha n$ are distinct. Therefore, if $1\le n\le R/(a_2+\alpha)$ and
$k\in E_n$, then $0\le k+\alpha n\le R$, and hence
\[
        |\widehat\kappa_0|([0,R])
        \ge
        \sum_{1\le n\le R/(a_2+\alpha)}\sum_{k\in E_n}
        |\widehat{b_\lambda^n}(k)|
        \ge
        c\sum_{1\le n\le R/(a_2+\alpha)} n^{1/2}.
\]
Since $[0,R]$ is covered by at most $ R+1$ intervals of length
one, we have $|\widehat\kappa_0|([0,R])
        \le
        \sum_{j=0}^{\lfloor R\rfloor}
        |\widehat\kappa_0|([j,j+1])$. Therefore there exists $j_R\in\mathbb N$ such that
\[
        |\widehat\kappa_0|([j_R,j_R+1])\geq K R^{1/2},\qquad K>0.
\]

\section{Truncation and sampling for Meyer--Blaschke measures}\label{sec:fourier-trunc-sampling}
We retain the notation introduced above:
\[
\widehat\kappa_B=
\sum_{k,n\in\mathbb Z} c(k,n)\delta_{k+\alpha n},
\]
where \(B\) is a finite Blaschke product, \(f_0\) is the auxiliary annulus function, \(\alpha>0\), and \(c(k,n)\) denotes the coefficient in the \(n\)-th Blaschke row.

We consider a finite truncation in two distinct variables: the row index \(n\) and the projected spectral variable \(k+\alpha n\), keeping only the terms with \(|n|\le N\) and \(|k+\alpha n|\le K\). The estimates below control the two resulting errors separately against Schwartz test functions.

The same truncation is then inserted into a shifted Poisson identity. Under a spectral gap assumption, the identity gives an exact sampling formula for band-limited functions. For general Schwartz test functions, the convergence estimates for the Meyer-Blaschke measures show that, provided \(K=K(N)\) grows sufficiently fast, the finite Fourier-side truncations converge to \(\widehat\kappa_B\) in \(\mathcal S'(\mathbb R)\), with an explicit error bound.

The present discussion connects with the classical theory of truncated sampling expansions, notably D.~Jagerman~\cite{Jagerman1966} and A.~J~.Jerri~\cite[Sec.~VI-A]{Jerri1977}.

\subsection{Finite truncation of \texorpdfstring{\(\widehat\kappa_B\)}{kappa-hat}}
\label{sec:finite-truncation}

For integers $N\ge1$ and $K\ge0$, consider the truncated series
\[
\widehat\kappa_{B;N,K}
=
\sum_{\substack{|n|\le N\\ |k+\alpha n|\le K}}
c(k,n)\delta_{k+\alpha n}.
\]
Thus \(\widehat\kappa_{B;N,K}\) is obtained from \(\widehat\kappa_B\) by deleting
two kinds of terms: all rows with \(|n|>N\), and, in the remaining rows, all atoms
whose projected frequency lies outside \([-K,K]\). Our goal in this section is to provide conditions such that the
two discarded tails are small when tested against Schwartz functions.

Throughout this subsection, let $B$ be a finite Blaschke product, let $f_0$ be holomorphic in an annulus $r<|z|<R$, $r<1<R$, and let $\alpha>0$. The stronger hypotheses ensuring that the inverse Fourier transform $\kappa_B$ is a sparse crystalline measure are not needed until the sampling interpretation in Subsection~\ref{sec:sampling-reconstruction}.

We shall use the row estimates~\eqref{eq:Blyudze} and~\eqref{eq:absoluteSum} established above, together with the following elementary weighted convolution estimate.
Using the inequality
\[
(1+|y+\ell|)^{-M}
\le
(1+|\ell|)^M(1+|y|)^{-M},
\]
one verifies that there exist constants $C_{M,f_0}>0$ such that for every $M>0$, 
\begin{equation}
\sum_{\ell\in\mathbb Z}
|\widehat{f_0}(\ell)|(1+|y+\ell|)^{-M}
\le
C_{M,f_0}(1+|y|)^{-M},
\qquad y\in\mathbb R.\label{eq:weightedConvolution}
\end{equation}

\begin{lemma}
\label{lem:weighted-summability-kappahatB}
There exists a constant $C_{M,B,f_0,\alpha}>0$ so that
\begin{equation*}
        \sum_{k\in\mathbb Z}
        |c(k,n)|(1+|k+\alpha n|)^{-M}<C_{M,B,f_0,\alpha}(1+|n|)^{1/2-M}.
\end{equation*}
Consequently if $M>3/2$ then for every
\(\varphi\in\mathcal S(\mathbb R)\),
\begin{equation*}
        |\widehat\kappa_B(\varphi)|
        \le
        C_{M,B,f_0,\alpha}\sup_{x\in\mathbb R}(1+|x|)^M|\varphi(x)|.
\end{equation*}
\end{lemma}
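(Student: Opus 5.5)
The plan is to reduce the weighted row estimate to a statement about the coefficients of $B^n$ alone, and then transfer it to $c(k,n)$ through the convolution with $\widehat{f_0}$. The key observation is that the coefficients of $B^n$ sit on the correct side of the origin. For $n\ge1$, $B^n$ is holomorphic, so $\widehat{B^n}(q)=0$ for $q<0$. For $n\le-1$, we have $B^n=\overline{B^{|n|}}$ on $\mathbb T$, so $\widehat{B^n}(q)=0$ for $q>0$. Since $\alpha>0$, on the support of $q\mapsto\widehat{B^n}(q)$ this gives
\[
|q+\alpha n|=|q|+\alpha|n|\ge\alpha|n|.
\]
Combining this with the Blyudze--Shimorin bound \eqref{eq:Blyudze} yields
\[
\sum_{q\in\mathbb Z}|\widehat{B^n}(q)|(1+|q+\alpha n|)^{-M}\le (1+\alpha|n|)^{-M}C_B(1+|n|)^{1/2}\le C'_{M,B,\alpha}(1+|n|)^{1/2-M}.
\]
The case $n=0$ is trivial, since $\widehat{B^0}=\delta_0$. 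No exponential-tail estimate from the appendix is needed, because the positivity of $\alpha$ already separates the whole row from the point $-\alpha n$.

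Next I would write $c(k,n)=\sum_{\ell}\widehat{f_0}(\ell)\widehat{B^n}(k-\ell)$ and substitute $q=k-\ell$. By Tonelli (all terms are nonnegative),
\[
\sum_k|c(k,n)|(1+|k+\alpha n|)^{-M}\le\sum_{\ell}|\widehat{f_0}(\ell)|\sum_q|\widehat{B^n}(q)|(1+|q+\alpha n+\ell|)^{-M}.
\]
I would then apply the elementary inequality $(1+|y+\ell|)^{-M}\le(1+|\ell|)^M(1+|y|)^{-M}$, the same one that underlies \eqref{eq:weightedConvolution}, with $y=q+\alpha n$. This bounds the right-hand side by
\[
\Bigl(\sum_\ell|\widehat{f_0}(\ell)|(1+|\ell|)^M\Bigr)\,C'_{M,B,\alpha}(1+|n|)^{1/2-M}.
\]
The $\ell$-sum is finite because $\widehat{f_0}$ decays exponentially, as $f_0$ is holomorphic in the annulus. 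This proves the first claim.

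For the consequence, fix $\varphi\in\mathcal S(\mathbb R)$ and set $\|\varphi\|_M=\sup_x(1+|x|)^M|\varphi(x)|$. Then $|\varphi(k+\alpha n)|\le\|\varphi\|_M(1+|k+\alpha n|)^{-M}$, and hence
\[
\sum_{k,n}|c(k,n)||\varphi(k+\alpha n)|\le\|\varphi\|_M\, C_{M,B,f_0,\alpha}\sum_{n\in\mathbb Z}(1+|n|)^{1/2-M}.
\]
The last series converges exactly when $M>3/2$. The resulting absolute convergence also justifies identifying $\widehat\kappa_B(\varphi)$ with the double series, whatever the order of summation. Absorbing the $n$-sum into the constant gives the stated bound.

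The only point needing care is the vanishing of $\widehat{B^n}$ on the wrong half-line for negative $n$, which is used in place of tail estimates. I would check it explicitly via $B^{-m}=\overline{B^m}$ on $\mathbb T$, which gives $\widehat{B^{-m}}(q)=\overline{\widehat{B^m}(-q)}$. I expect no serious obstacle beyond this.
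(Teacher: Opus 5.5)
Your proof is correct and follows the paper's argument. It uses the same ingredients: the convolution and Tonelli step with the Peetre-type inequality, which is exactly how the paper obtains \eqref{eq:weightedConvolution}; the sign-of-support observation $|q+\alpha n|\ge\alpha|n|$ for $\alpha>0$; and the Blyudze--Shimorin row bound \eqref{eq:Blyudze}. The only difference is the order: you bound the $B^n$-row first and then absorb $\widehat{f_0}$, whereas the paper absorbs $\widehat{f_0}$ first.
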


\begin{proof}
By the triangle inequality, Tonelli's theorem for non-negative series, and the weighted convolution estimate~\eqref{eq:weightedConvolution} with $y=q+\alpha n$ we obtain
\begin{align*}
\sum_{k\in\mathbb Z}|c(k,n)|(1+|k+\alpha n|)^{-M}
&\le
\sum_{q\in\mathbb Z}|\widehat{B^n}(q)|
\sum_{\ell\in\mathbb Z}
|\widehat{f_0}(\ell)|(1+|q+\ell+\alpha n|)^{-M}\\
&\leq C_{M,f_0}
\sum_{q\in\mathbb Z}
|\widehat{B^n}(q)|(1+|q+\alpha n|)^{-M}.
\end{align*}

Now the assumption $\alpha>0$ gives a uniform lower bound on the projected variable. If $n>0$, then $B^n\in H^2$, so $\widehat{B^n}(q)=0$ for $q<0$, and hence
\[
|q+\alpha n|\ge \alpha |n|
\qquad
\text{on the support of } \widehat{B^n}.
\]
If $n<0$, then $B^n=\overline{B^{|n|}}$ on $\mathbb T$, so $\widehat{B^n}(q)=0$ for $q>0$, and again
\[
|q+\alpha n|\ge \alpha |n|
\qquad
\text{on the support of } \widehat{B^n}.
\]
The case $n=0$ is immediate. Therefore
\[
(1+|q+\alpha n|)^{-M}
\le
C_{M,\alpha}(1+|n|)^{-M}
\]
on the support of $\widehat{B^n}$. This yields, together with the Blyudze-Shimorin estimate~\eqref{eq:Blyudze}, that
\begin{align*}
\sum_{k\in\mathbb Z}|c(k,n)|(1+|k+\alpha n|)^{-M}
&\le
C_{M,B,f_0,\alpha}
(1+|n|)^{-M}
\sum_{q\in\mathbb Z}|\widehat{B^n}(q)|  \\
&\le
C_{M,B,f_0,\alpha}
(1+|n|)^{1/2-M}.
\end{align*}
If \(M>3/2\), then $\sum_{n\in\mathbb Z}(1+|n|)^{1/2-M}<\infty$
and hence the weighted double series is finite. Finally, for $\varphi\in\mathcal{S}(\mathbb R)$,
\[
|\varphi(k+\alpha n)|
\le
\sup_{x\in\mathbb R}(1+|x|)^M|\varphi(x)|
(1+|k+\alpha n|)^{-M}.
\]
Multiplying by $|c(k,n)|$, summing over $(k,n)$ yields the claimed bound on $\widehat{\kappa}_B(\varphi)$.
\end{proof}

We now quantify how $\widehat\kappa_{B;N,K}$ approximates $\widehat{\kappa}_B$.
\begin{thm}\label{thm:truncation-kappaB}
Let \(M>3/2\) and set
\[
        \mathcal N_M(\varphi)=\sup_{x\in\mathbb R}(1+|x|)^M|\varphi(x)|.
\]
There exists a constant \(C_{M,B,f_0,\alpha}>0\) such that, for all $N\geq1$ and $K\geq0$ for all
\(\varphi\in\mathcal S(\mathbb R)\),
\begin{equation*}
        |(\widehat\kappa_B-\widehat\kappa_{B;N,K})(\varphi)|
        \le
        C_{M,B,f_0,\alpha}\,\mathcal N_M(\varphi)
        \left(N^{3/2-M}+N^{3/2}(1+K)^{-M}\right).
\end{equation*}
Consequently, if \(K=K(N)\to\infty\) and
\(N^{3/2}(1+K(N))^{-M}\to0\), then
\begin{align*}
\widehat\kappa_{B;N,K(N)}\to\widehat\kappa_B\quad \text{in}\quad \mathcal S'(\mathbb R).
\end{align*}
\end{thm}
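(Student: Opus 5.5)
The plan is to split the difference $\widehat\kappa_B-\widehat\kappa_{B;N,K}$ into two discarded pieces and bound each one using Lemma~\ref{lem:weighted-summability-kappahatB}. The first piece is the row tail $T_1=\sum_{|n|>N}\sum_{k}c(k,n)\delta_{k+\alpha n}$. The second is the spectral tail inside the kept rows, $T_2=\sum_{|n|\le N}\sum_{|k+\alpha n|>K}c(k,n)\delta_{k+\alpha n}$. By Lemma~\ref{lem:weighted-summability-kappahatB} the full double series converges absolutely when tested against $\varphi$, because $M>3/2$. This justifies the splitting and any rearrangement, and gives $(\widehat\kappa_B-\widehat\kappa_{B;N,K})(\varphi)=T_1(\varphi)+T_2(\varphi)$.

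For $T_1$, I will use the pointwise bound $|\varphi(x)|\le\mathcal N_M(\varphi)(1+|x|)^{-M}$ together with the row estimate of Lemma~\ref{lem:weighted-summability-kappahatB}. This gives
\[
|T_1(\varphi)|\le \mathcal N_M(\varphi)\sum_{|n|>N}C_{M,B,f_0,\alpha}(1+|n|)^{1/2-M}\le C\,\mathcal N_M(\varphi)N^{3/2-M},
\]
where the last step compares the sum with $\int_N^\infty t^{1/2-M}\,dt$. This comparison uses $M>3/2$.

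For $T_2$, every retained atom has $|k+\alpha n|>K$, so $(1+|k+\alpha n|)^{-M}\le(1+K)^{-M}$. I will discard all the decay in the projected variable and use only the unweighted row bound~\eqref{eq:absoluteSum}:
\[
|T_2(\varphi)|\le \mathcal N_M(\varphi)(1+K)^{-M}\sum_{|n|\le N}\sum_k|c(k,n)|\le C_{B,f_0}\mathcal N_M(\varphi)(1+K)^{-M}\sum_{|n|\le N}(1+|n|)^{1/2}.
\]
The last sum is $\cO(N^{3/2})$, which gives the second term of the estimate. One could split the weight to gain a factor of $N$, but that is not needed for the stated bound. The only point needing care is that the constants must be uniform in $N\ge1$ and $K\ge0$, which holds because both row estimates are uniform in $n$.

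For the convergence statement, take any $\varphi\in\mathcal S(\mathbb R)$. Then $\mathcal N_M(\varphi)<\infty$, $N^{3/2-M}\to0$, and by hypothesis $N^{3/2}(1+K(N))^{-M}\to0$. So $\widehat\kappa_{B;N,K(N)}(\varphi)\to\widehat\kappa_B(\varphi)$ for every test function, which is weak-$*$ convergence in $\mathcal S'(\mathbb R)$. Since $\mathcal N_M$ is dominated by a Schwartz seminorm, the convergence is in fact uniform on bounded subsets of $\mathcal S$. There is no real obstacle here. The main subtlety is the absolute-convergence justification, and it is exactly what Lemma~\ref{lem:weighted-summability-kappahatB} supplies.
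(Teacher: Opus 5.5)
Your proposal is correct and follows essentially the same route as the paper. The paper splits the error into the row tail $|n|>N$, bounded via the weighted row estimate of Lemma~\ref{lem:weighted-summability-kappahatB}, and the spectral tail $|k+\alpha n|>K$ within the kept rows, bounded via $(1+K)^{-M}$ and the unweighted row bound~\eqref{eq:absoluteSum}; your extra remarks on absolute convergence and on uniform convergence over bounded subsets of $\mathcal S(\mathbb R)$ are valid but not needed for the stated result.
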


\begin{proof}
The omitted terms are those with either $|n|>N$, or with $|n|\le N$ and $|k+\alpha n|>K$. Hence
\begin{align*}
|(\widehat\kappa_B-\widehat\kappa_{B;N,K})(\varphi)|
&\le
\sum_{\substack{|n|>N\\ k\in\mathbb Z}}
|c(k,n)||\varphi(k+\alpha n)|
+
\sum_{\substack{|n|\le N\\ |k+\alpha n|>K}}
|c(k,n)||\varphi(k+\alpha n)|.
\end{align*}
Using Lemma~\ref{lem:weighted-summability-kappahatB} for the first term gives
\begin{align*}
\sum_{\substack{|n|>N\\ k\in\mathbb Z}}
|c(k,n)||\varphi(k+\alpha n)|
&\le
C_{M,B,f_0,\alpha}\mathcal N_M(\varphi)
\sum_{|n|>N}(1+|n|)^{1/2-M}  \\
&\le
C_{M,B,f_0,\alpha}\mathcal N_M(\varphi)N^{3/2-M}.
\end{align*}
For the second term, if $|k+\alpha n|>K$, then
$|\varphi(k+\alpha n)|
\le
\mathcal N_M(\varphi)(1+K)^{-M}$.
This yields
\begin{align*}
\sum_{\substack{|n|\le N\\ |k+\alpha n|>K}}
|c(k,n)||\varphi(k+\alpha n)|
&\le
\mathcal N_M(\varphi)(1+K)^{-M}
\sum_{|n|\le N}\sum_{k\in\mathbb Z}|c(k,n)|  \\
&\le
C_{B,f_0}\mathcal N_M(\varphi)(1+K)^{-M}
\sum_{|n|\le N}(1+|n|)^{1/2}  \\
&\le
C_{B,f_0}\mathcal N_M(\varphi)
N^{3/2}(1+K)^{-M},
\end{align*}
where we made use of the estimate~\eqref{eq:absoluteSum} for the second inequality. Combining the two estimates proves the claimed inequality. The convergence statement follows because $M>3/2$ implies $N^{3/2-M}\to0$, while the second term tends to zero by assumption on $K(N)$.
\end{proof}

\subsection{Fourier-side truncation for Poisson identities and reconstruction}
\label{sec:sampling-reconstruction}

We use the Poisson identity for the Meyer-Blaschke measures in two ways: a spectral gap gives the familiar exact reconstruction formula for band-limited functions, while Theorem~\ref{thm:truncation-kappaB} quantifies finite truncations of the Fourier-side sum for general Schwartz functions.

For a crystalline measure
\[
\kappa=\sum_{\lambda\in\Lambda}a_\lambda\delta_\lambda,
\qquad
\widehat\kappa=\sum_{s\in S}d_s\delta_s,
\]
the generalized Poisson identity applied to the test function $s\mapsto\widehat\varphi(\xi-s)$ gives a shifted Poisson formula
\begin{equation*}
\sum_{\lambda\in\Lambda}
a_\lambda\varphi(\lambda)e^{-2\pi i\xi\lambda}
=
\sum_{s\in S}
d_s\widehat\varphi(\xi-s),
\qquad \xi\in\mathbb R.
\end{equation*}

We first observe an elementary mechanism which turns the summation formula into a sampling formula in the band-limited setting. Fix $B_0>0$, and assume that $0\in S$, $d_0\ne0$
and that the remaining spectral atoms avoid the interval $[-2B_0,2B_0]$:
\begin{equation*}
(S\setminus\{0\})\cap[-2B_0,2B_0]=\emptyset.
\end{equation*}
For the Meyer--Blaschke measures considered here this spectral gap exists, which is a consequence of the local finiteness of the Fourier-side support rather than of the Poisson identity alone.

If $\text{supp}(\widehat\varphi)\subset[-B_0,B_0]$, then for $\xi\in[-B_0,B_0]$ and $s\in S\setminus\{0\}$, one has $\widehat\varphi(\xi-s)=0$. In this situation the Fourier-side sum reduces to its central term,
\[
\sum_{s\in S}
d_s\widehat\varphi(\xi-s)
=
d_0\widehat\varphi(\xi),
\qquad \xi\in[-B_0,B_0].
\]
Using the shifted Poisson formula this gives
\begin{equation*}
d_0\widehat\varphi(\xi)
=
\sum_{\lambda\in\Lambda}
a_\lambda\varphi(\lambda)e^{-2\pi i\xi\lambda},
\qquad \xi\in[-B_0,B_0].
\end{equation*}
Since $\widehat\varphi$ is supported in $[-B_0,B_0]$, inverse Fourier transform gives
\begin{equation*}
\varphi(t)
=
\frac{2B_0}{d_0}
\sum_{\lambda\in\Lambda}
a_\lambda\varphi(\lambda)
\operatorname{sinc}\bigl(2B_0(t-\lambda)\bigr),
\end{equation*}
where $\operatorname{sinc}(x)=\sin(\pi x)/(\pi x)$. The identity is understood in the distributional sense, and pointwise wherever the sampling series converges. Thus, under this gap condition, the samples $\varphi(\lambda)$, $\lambda\in\Lambda$, determine the function $\varphi$.

We now return to the Meyer-type measures associated with finite Blaschke products. The preceding reduction uses the compact support of $\widehat\varphi$ and the spectral gap to collapse the Fourier-side sum to its central atom. For general Schwartz functions the full Fourier-side sum is present and the truncation estimate from Theorem~\ref{thm:truncation-kappaB} becomes relevant.

\begin{corollary}[Finite spectral approximation in the Poisson identity]
\label{cor:finite-spectral-approximation}
Assume the hypotheses of Theorem~\ref{thm:truncation-kappaB}, and assume in addition that the inverse Fourier transform $\kappa_B$ is a crystalline measure. Let $I\subset\mathbb R$ be compact and let $M>3/2$. Then there exists a constant $C_{M,I,B,f_0,\alpha}>0$ such that, for every $\varphi\in\mathcal S(\mathbb R)$, every $N\ge1$, every $K\ge0$, and every $\xi\in I$,
\begin{align*}
\Bigg|
\sum_{k,n\in\mathbb Z}
c(k,n)&\widehat\varphi(\xi-k-\alpha n)-
\sum_{\substack{|n|\le N\\ |k+\alpha n|\le K}}
c(k,n)\widehat\varphi(\xi-k-\alpha n)
\Bigg| \\
&\qquad\le
C_{M,I,B,f_0,\alpha}
\sup_{u\in\mathbb R}
(1+|u|)^M|\widehat\varphi(u)|
\left(
N^{3/2-M}
+
N^{3/2}(1+K)^{-M}
\right).
\end{align*}
Consequently, if $K=K(N)\to\infty$ and $N^{3/2}(1+K(N))^{-M}\to0$,
then the finite spectral sums converge uniformly on $I$ to the Fourier-side sum in the Poisson identity.

If, moreover, $S=\operatorname{supp}(\widehat\kappa_B)$ satisfies the gap condition
\[
0\in S,\qquad d_0=\widehat\kappa_B(\{0\})\ne0,\qquad
(S\setminus\{0\})\cap[-2B_0,2B_0]=\varnothing,
\]
and if $\operatorname{supp}\widehat\varphi\subset[-B_0,B_0]$, then the same finite sums approximate the central term $d_0\widehat\varphi(\xi)$ uniformly for $\xi\in[-B_0,B_0]$:
\[
\begin{aligned}
&\left|
d_0\widehat\varphi(\xi)-
\sum_{\substack{|n|\le N\\ |k+\alpha n|\le K}}
c(k,n)\widehat\varphi(\xi-k-\alpha n)
\right| \\
&\qquad\le
C_{M,B_0,B,f_0,\alpha}
\sup_{u\in\mathbb R}(1+|u|)^M|\widehat\varphi(u)|
\left(
N^{3/2-M}
+
N^{3/2}(1+K)^{-M}
\right),
\qquad \xi\in[-B_0,B_0].
\end{aligned}
\]
\end{corollary}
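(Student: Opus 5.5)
The plan is to reduce everything to Theorem~\ref{thm:truncation-kappaB}, applied to the translated test function $\psi_\xi(s)=\widehat\varphi(\xi-s)$. Since $\varphi\in\mathcal S(\mathbb R)$, also $\psi_\xi\in\mathcal S(\mathbb R)$, and by definition of the projected measures
\[
\sum_{k,n\in\mathbb Z}c(k,n)\widehat\varphi(\xi-k-\alpha n)=\widehat\kappa_B(\psi_\xi),\qquad
\sum_{\substack{|n|\le N\\ |k+\alpha n|\le K}}c(k,n)\widehat\varphi(\xi-k-\alpha n)=\widehat\kappa_{B;N,K}(\psi_\xi).
\]
The left-hand sum converges absolutely by Lemma~\ref{lem:weighted-summability-kappahatB}. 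The difference in the first claim is therefore exactly $(\widehat\kappa_B-\widehat\kappa_{B;N,K})(\psi_\xi)$, and Theorem~\ref{thm:truncation-kappaB} bounds it by $C_{M,B,f_0,\alpha}\mathcal N_M(\psi_\xi)\bigl(N^{3/2-M}+N^{3/2}(1+K)^{-M}\bigr)$. The crystalline hypothesis on $\kappa_B$ is not needed for this estimate. It only serves to identify the Fourier-side sum with the physical-side sum $\sum_\lambda a_\lambda\varphi(\lambda)e^{-2\pi i\xi\lambda}$ through the shifted Poisson formula.

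The one real step is to control $\mathcal N_M(\psi_\xi)$ uniformly for $\xi\in I$, and this is where the constant's dependence on $I$ enters. Using Peetre's inequality $(1+|s|)\le(1+|\xi|)(1+|\xi-s|)$, we get
\[
\mathcal N_M(\psi_\xi)=\sup_s(1+|s|)^M|\widehat\varphi(\xi-s)|\le(1+|\xi|)^M\sup_u(1+|u|)^M|\widehat\varphi(u)|.
\]
Taking $C_{M,I,B,f_0,\alpha}=C_{M,B,f_0,\alpha}\sup_{\xi\in I}(1+|\xi|)^M$ gives the first estimate. Because this bound is uniform in $\xi\in I$, the convergence statement follows directly: if $K(N)\to\infty$ with $N^{3/2}(1+K(N))^{-M}\to0$, then both error terms vanish uniformly on $I$ (recall $M>3/2$).

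For the gap statement, take $I=[-B_0,B_0]$. For $\xi$ in this interval and any atom $s=k+\alpha n\ne0$ of $\widehat\kappa_B$, we have $|s|>2B_0$ by the gap condition. Hence $|\xi-s|>B_0$, so $\widehat\varphi(\xi-s)=0$ because $\operatorname{supp}\widehat\varphi\subset[-B_0,B_0]$. The full Fourier-side sum therefore collapses to $d_0\widehat\varphi(\xi)$.

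One bookkeeping point needs care here. Since $\alpha$ may be rational, several pairs $(k,n)$ can project to the same atom. The collapsed sum must then be written as the sum of $c(k,n)$ over $\{k+\alpha n=0\}$ multiplied by $\widehat\varphi(\xi)$, and $d_0=\widehat\kappa_B(\{0\})$ is exactly this coefficient sum. Pairs with $c(k,n)=0$ contribute nothing, so it does not matter whether they lie in $S$. Substituting this identity into the first estimate with $I=[-B_0,B_0]$ gives the second bound, with $C_{M,B_0,B,f_0,\alpha}=C_{M,B,f_0,\alpha}(1+B_0)^M$.
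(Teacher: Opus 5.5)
Your proposal is correct and follows the paper's proof. You apply Theorem~\ref{thm:truncation-kappaB} to $\psi_\xi(s)=\widehat\varphi(\xi-s)$, bound $\mathcal N_M(\psi_\xi)$ uniformly for $\xi\in I$ (your Peetre inequality makes the paper's $C_{M,I}$ explicit as $\sup_{\xi\in I}(1+|\xi|)^M$), and collapse the Fourier-side sum to $d_0\widehat\varphi(\xi)$ using the gap and $\operatorname{supp}\widehat\varphi\subset[-B_0,B_0]$. The rational-$\alpha$ remark is better phrased as regrouping by atoms, which absolute convergence justifies: a point $s\neq0$ whose pairs have total mass zero then contributes nothing, and this is the relevant point rather than individual pairs with $c(k,n)=0$.
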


\begin{proof}
For the Meyer-Blaschke measure, the Fourier-side sum in the shifted Poisson formula is $$\sum_{k,n\in\mathbb Z}
c(k,n)\widehat\varphi(\xi-k-\alpha n).$$ Applying Theorem~\ref{thm:truncation-kappaB} to $\psi_\xi(s)=\widehat\varphi(\xi-s)$ gives
\begin{align*}
\Bigg|
\sum_{k,n\in\mathbb Z}
c(k,n)&\widehat\varphi(\xi-k-\alpha n)-
\sum_{\substack{|n|\le N\\ |k+\alpha n|\le K}}
c(k,n)\widehat\varphi(\xi-k-\alpha n)
\Bigg| \\
&\qquad\le
C_{M,B,f_0,\alpha}
\mathcal N_M(\psi_\xi)
\left(
N^{3/2-M}
+
N^{3/2}(1+K)^{-M}
\right).
\end{align*}
For $\xi\in I$,
\[
\mathcal N_M(\psi_\xi)
=
\sup_{s\in\mathbb R}
(1+|s|)^M|\widehat\varphi(\xi-s)|
\le
C_{M,I}
\sup_{u\in\mathbb R}
(1+|u|)^M|\widehat\varphi(u)|.
\]
This proves the uniform estimate. The convergence statement follows immediately. Under the gap condition, the shifted Poisson identity gives
\[
\sum_{k,n\in\mathbb Z}
c(k,n)\widehat\varphi(\xi-k-\alpha n)
=
d_0\widehat\varphi(\xi),
\qquad \xi\in[-B_0,B_0],
\]
and the second estimate follows from the first with $I=[-B_0,B_0]$.
\end{proof}

If $\alpha=p/q\in\mathbb Q$, with $(p,q)=1$ then the projected spectral set is contained in the lattice $q^{-1}\mathbb Z$. The Fourier-side measure may be regrouped as a weighted lattice comb
\[
\widehat\kappa_B
=
\sum_{m\in\mathbb Z}d_m\delta_{m/q},
\qquad
d_m=
\sum_{\substack{k,n\in\mathbb Z\ qk+pn=m}}c(k,n),
\]
with the sums understood in the distributional sense justified by the preceding estimates. If, in addition, $\kappa_B=\sum_{\lambda\in\Lambda_B}a_\lambda\delta_\lambda$ is crystalline, then the measure is pure point on both sides: the physical support $\Lambda_B$ gives the sampling nodes, while the spectral support is a weighted subcomb of $q^{-1}\mathbb Z$. In this case the gap condition in Corollary~\ref{cor:finite-spectral-approximation} becomes the lattice condition
\[
d_0\ne0,
\qquad
d_m=0
\quad\text{whenever}\quad
0<|m|/q\le 2B_0.
\]
Under this condition the finite spectral sums approximate the central term $d_0\widehat\varphi$, and the corresponding sampling formula uses the nodes $\lambda\in\Lambda_B$.

{
\section*{Acknowledgments}
This work was supported by the Swiss National Science Foundation, SNF grant No.~CRSK-2 229036. Part of this work was carried out during a research stay with the groups of Kristian Seip and Andrii Bondarenko. The authors thank Yves Meyer for his helpful feedback.
}

\FloatBarrier

\clearpage
\appsection{Coefficient estimates for powers of finite Blaschke products}{sec:asymBla}

This appendix collects the coefficient estimates used in Sections~\ref{sec:meyer-measures} and~\ref{sec:fourier-trunc-sampling}.
Appendix~\ref{app:meyer-bulk-lower} treats the one-factor case \(B=b_\lambda\),
\(f_0=1\), where \(c(q,n)=\widehat{b_\lambda^n}(q)\). Its lower bounds are used
in Section~\ref{sec:meyer-measures} for Meyer's example and for the sharpness
statement in Corollary~\ref{cor:sharp-three-halves}; Corollary~\ref{cor:l1-short}
also gives the matching lower bound to the \(\ell^1\)-row estimate
\eqref{eq:Blyudze}. Appendix~\ref{app:finite-blaschke-coeff-bounds} contains the estimates on the coefficients of finite Blaschke products needed in the general case.\par


\appsubsection{A lower bound for coefficients}{app:meyer-bulk-lower}

The precise input used below is the uniform stationary-phase expansion for coefficients of powers of a Blaschke factor on compact subintervals $I_\lambda$ of the oscillatory range $(\alpha_0, \alpha_0^{-1})$, where  $\alpha_0=\frac{1-\lambda}{1+\lambda}$.  
\begingroup
We use the same notation as in the end of Section~\ref{sec:meyer-measures} and in Theorem~\ref{Th_All_Regions}~(6) from Appendix~\ref{app:asymptotic-formulas}. Namely for \(t\in I_\lambda\) we set
\[
        \Phi_t(z)=\log\!\left(z^{-t}b_\lambda(z)\right),
        \qquad
        h_t(\varphi)=-i\Phi_t(e^{i\varphi})
        =\arg\!\left(e^{-it\varphi}b_\lambda(e^{i\varphi})\right),
        \qquad \varphi\in[0,\pi],
\]
with the same choice of branch as in Appendix~\ref{app:asymptotic-formulas}, and let
\(\varphi_+(t)\in(0,\pi)\) be the unique positive stationary point of \(h_t\). We then set
\[
        S_\lambda(t)=h_t\bigl(\varphi_+(t)\bigr),
        \qquad
        \Theta_n(q)=nS_\lambda(q/n)-\frac{\pi}{4}
        =n h_{q/n}\bigl(\varphi_+(q/n)\bigr)-\frac{\pi}{4}.
\]
\endgroup
Thus for \(q/n\in I_\lambda\),
\[
        \widehat{b_\lambda^n}(q)
        =n^{-1/2}A_\lambda(q/n)
        \bigl(\cos\Theta_n(q)+\varepsilon_n(q)\bigr),
        \qquad \varepsilon_n(q)\to0,
\]
where the convergence is uniform for \(q/n\in I_\lambda\); this is the stationary-phase
asymptotic from \cite[Theorem~2(2)]{BFZCA}, restricted to the fixed compact interval
\(I_\lambda\).  Such asymptotic analyses had already appeared in our earlier work:
for the coefficients of \(b_\lambda^n\) themselves, an Airy-transition
expansion near \(\alpha_0^{-1}n\) was obtained in
\cite[Proposition~6]{SzehrZaroufJAM}, while a closely related
stationary-phase analysis was developed in
\cite[Section~6]{SzehrZaroufSchaeffer2021} for the coefficients of
\((1-z^2)b_\lambda^n\). 
In the special case \(\lambda=1/2\), Meyer's argument invokes a stationary-phase lower estimate. Here we use the explicit cosine asymptotic with a uniform error term supplied by \cite{BFZCA},  which follows the techniques developed in \cite{SzehrZaroufJAM, SzehrZaroufSchaeffer2021}.  The factor \(A_\lambda\) is positive on
\(I_\lambda\), whereas \(\cos\Theta_n(q)\) may be close to zero for some values of
\(q\).  Thus the expansion does not yield a pointwise lower bound for every
coefficient in the interval.

The lemma below proves that there exists $L\geq2$ such that whenever the consecutive indices \(k,k+1,\ldots,k+L-1\) remain in \(nI_\lambda\), at least one of the corresponding coefficients has size comparable to \(n^{-1/2}\).  For
\(\lambda=1/2\), this is the phase-increment argument used in
\cite[Section~4, Eq.~(4.8)]{BFZCA}: the four consecutive indices
\(k,k+1,k+2,k+3\) are used to ensure that one of the corresponding cosine
factors is bounded away from zero, and therefore, after the uniform
stationary-phase error is made small, one of the four coefficients has size
at least a constant times \(n^{-1/2}\).  The same argument gives the following
statement for any \(\lambda\in(0,1)\).

\begin{lemma}\label{lem:osc-robust-lower}
Fix \(\lambda\in(0,1)\) and consider
\[
        b_\lambda(z)=\frac{z-\lambda}{1-\lambda z}.
\]
Let \(\psi_\lambda\) be defined on \((-\pi,\pi)\) by
\[
        b_\lambda(e^{i\theta})=e^{i\psi_\lambda(\theta)},
        \qquad \psi_\lambda(0)=0.
\]
For \(\theta\in(0,\pi)\),
\[
        \psi_\lambda'(\theta)
        =\frac{1-\lambda^2}{1+\lambda^2-2\lambda\cos\theta}.
\]
Thus \(\psi_\lambda'\) is strictly decreasing on \((0,\pi)\) with range
$\left(\alpha_0,\alpha_0^{-1}\right)$.      
Choose
\[
        0<c_-<c_+<\frac{2\pi}{3}
\]
and let \(L\ge2\) be an integer such that
\[
        (L-1)c_->\frac{\pi}{3}.
\]
Set
\[
        a_\lambda^-=\psi_\lambda'(c_+),
        \qquad
        a_\lambda^+=\psi_\lambda'(c_-),
        \qquad
        I_\lambda=[a_\lambda^-,a_\lambda^+].
\]
Then \(I_\lambda\subset(\alpha_0,\alpha_0^{-1})\).  There exist constants
\(c_0>0\) and \(n_0\ge1\), depending only on \(\lambda,c_-,c_+\), such that, for
every \(n\ge n_0\) and every integer \(k\) satisfying
\[
        \frac{k}{n}\in I_\lambda,
        \qquad
        k+L-1\le n a_\lambda^+,
\]
one has
\[
        \max_{0\le j\le L-1}
        \left|\widehat{b_\lambda^n}(k+j)\right|
        \ge c_0 n^{-1/2}.
\]
\end{lemma}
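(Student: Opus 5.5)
The proof will use the uniform stationary-phase expansion quoted above,
\[
\widehat{b_\lambda^n}(q)=n^{-1/2}A_\lambda(q/n)\bigl(\cos\Theta_n(q)+\varepsilon_n(q)\bigr),
\]
valid uniformly for $q/n\in I_\lambda$. The argument has three steps. First record the elementary facts about $\psi_\lambda$. Then show that the phase $\Theta_n$ advances, from one index to the next, by an amount confined to a fixed window $[c_-,c_+]$. Finally deduce that among $L$ consecutive phases one cosine is bounded away from zero.

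The first step is direct. Differentiating $\arg\frac{e^{i\theta}-\lambda}{1-\lambda e^{i\theta}}$ gives the stated formula for $\psi_\lambda'$. This expression is strictly decreasing on $(0,\pi)$, from $\frac{1+\lambda}{1-\lambda}=\alpha_0^{-1}$ at $0$ to $\alpha_0$ at $\pi$. Since $0<c_-<c_+<\pi$, we get $I_\lambda=[\psi_\lambda'(c_+),\psi_\lambda'(c_-)]\subset(\alpha_0,\alpha_0^{-1})$. Hence the expansion applies on $I_\lambda$, and $A_\lambda\ge a_0>0$ there by continuity and compactness.

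The second step is the phase increment. The stationary-point equation $h_t'(\varphi)=0$ reads $\psi_\lambda'(\varphi)=t$, so $\varphi_+(t)=(\psi_\lambda')^{-1}(t)$. By the envelope theorem,
\[
S_\lambda'(t)=\partial_t h_t(\varphi)\big|_{\varphi=\varphi_+(t)}=-\varphi_+(t).
\]
Therefore
\[
\Theta_n(q+1)-\Theta_n(q)=n\bigl(S_\lambda((q+1)/n)-S_\lambda(q/n)\bigr)=-\varphi_+(\xi)
\]
for some $\xi\in[q/n,(q+1)/n]$. Under the hypotheses $k/n\in I_\lambda$ and $k+L-1\le na_\lambda^+$, all the points $(k+j)/n$ lie in $I_\lambda$, so $\xi\in I_\lambda$. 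Monotonicity then gives $\varphi_+(\xi)\in[c_-,c_+]$. I must check that the branch convention of the appendix does not introduce jumps; this is where care is needed.

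The third step, the key one, is a purely combinatorial claim. Let $\theta_0$ be arbitrary and let $\theta_{j+1}=\theta_j-d_j$ with every $d_j\in[c_-,c_+]$ and $c_+<2\pi/3$. Then some $j\le L-1$ satisfies $|\cos\theta_j|\ge\cos(\pi/2-\eta)=:\gamma$, where $\eta>0$ depends only on $c_\pm$. The set where $|\cos|<\gamma$ consists of the arcs of length $2\eta$ centred at $\pi/2+\pi\mathbb Z$. These arcs are separated by gaps of length $\pi-2\eta$. Suppose all $\theta_j$ with $j\le L-1$ lay in the bad arcs. A single step of size at least $c_->2\eta$ cannot stay inside one arc, so each step leaves its arc. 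A step of size at most $c_+<2\pi/3$ must then land in the next arc, which requires $c_+\ge\pi-2\eta$; choosing $\eta$ small makes $\pi-2\eta>c_+$, which forbids this. More carefully, the total advance $(L-1)c_->\pi/3$ forces the sequence to traverse beyond a bad arc into a good region. The rigorous form is to pick $\eta<\min(c_-/2,(\pi-c_+)/2)$. Then a single step from a bad point either lands in the good set or crosses an entire good gap, and the latter is impossible since $c_+<\pi-2\eta$. So a good index is reached already at $j=1$, which suffices because $L\ge2$. I expect this step to be the main obstacle, mainly in reconciling it with the paper's specific constants $(L-1)c_->\pi/3$ and $c_+<2\pi/3$. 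I would first try the simple two-arc argument above and then adapt $\eta$ to fit those constants.

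To conclude, choose $n_0$ so that $|\varepsilon_n|<\gamma/2$ uniformly for $n\ge n_0$. Then
\[
\max_{0\le j\le L-1}|\widehat{b_\lambda^n}(k+j)|\ge n^{-1/2}a_0\gamma/2,
\]
which is the claim with $c_0=a_0\gamma/2$.
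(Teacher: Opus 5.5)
Your proof is correct. Steps one and two, and the final absorption of $\varepsilon_n$, coincide with the paper's. Those steps are the formula for $\psi_\lambda'$, the envelope identity $S_\lambda'(t)=-\varphi_+(t)$, and the mean-value bound $c_-\le\Theta_n(q)-\Theta_n(q+1)\le c_+$.

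The genuine difference is in the trigonometric step. The paper fixes the threshold at $1/2$. The bad set $\{|\cos x|<1/2\}$ consists of intervals of length $\pi/3$ separated by gaps of length $2\pi/3$. So $c_+<2\pi/3$ forces all $L$ phases into a single bad interval, which contradicts the accumulated drop $(L-1)c_->\pi/3$. This gives the clean constant $c_0=A_*/4$, but it uses both the constraint on $c_+$ and the choice of $L$.

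You instead adapt the threshold to the step sizes, taking $\gamma=\sin\eta$ with $0<\eta<\min(c_-/2,(\pi-c_+)/2)$. Then a single step from a bad arc can neither stay in it, since the arc has length $2\eta<c_-$, nor reach the next arc, since the gap has length $\pi-2\eta>c_+$. Hence one of $\Theta_n(k),\Theta_n(k+1)$ is already good.

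What your route buys is that two consecutive indices always suffice. The hypotheses $(L-1)c_->\pi/3$ and $c_+<2\pi/3$ become superfluous, and only $c_+<\pi$ is needed. The price is that your threshold degenerates as $c_-\to0$ or $c_+\to\pi$; in the paper that degeneration is pushed into the block length $L$ instead. For the later use in Corollary~\ref{cor:l1-short}, either version yields the $c\sqrt n$ lower bound.

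In a write-up, drop the heuristic sentences that invoke $(L-1)c_->\pi/3$, since they play no role in your rigorous argument. The branch issue you flag is harmless. Only $\cos\Theta_n(q)$ enters, and any admissible branch changes $nh_t$ by an integer multiple of $2\pi$. So you may work throughout with the smooth representative $h_t(\varphi)=\psi_\lambda(\varphi)-t\varphi$ on $[c_-,c_+]\subset(0,\pi)$, exactly as the paper does.
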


\begin{proof}
The coefficient has the oscillatory integral representation
\[
        \widehat{b_\lambda^n}(q)
        =\frac1{2\pi}\int_{-\pi}^{\pi}
        \exp\bigl(inh_{q/n}(\varphi)\bigr)\,d\varphi,
\]
where, on the branch fixed above,
\[
        h_t(\varphi)=\psi_\lambda(\varphi)-t\varphi,
        \qquad
        b_\lambda(e^{i\varphi})=e^{i\psi_\lambda(\varphi)}.
\]
The stationary points of \(h_t\) satisfy \(\psi_\lambda'(\varphi)=t\). Since
\(I_\lambda=[\psi_\lambda'(c_+),\psi_\lambda'(c_-)]\) and \(\psi_\lambda'\) is strictly
decreasing on \((0,\pi)\), every \(t\in I_\lambda\) has a unique positive stationary
point \(\varphi_+(t)\in[c_-,c_+]\). Moreover \(\psi_\lambda'\) is \(C^\infty\) and
\[
        (\psi_\lambda')'(\theta)
        =-\frac{2\lambda(1-\lambda^2)\sin\theta}
        {(1+\lambda^2-2\lambda\cos\theta)^2}
        <0,
        \qquad 0<\theta<\pi .
\]
Therefore the inverse function theorem shows that the map
\[
        \varphi_+:I_\lambda\longrightarrow[c_-,c_+]
\]
is \(C^1\). By the uniform stationary-phase expansion quoted above, uniformly for all integers \(q\) such that \(q/n\in I_\lambda\),
\[
        \widehat{b_\lambda^n}(q)
        =n^{-1/2}A_\lambda(q/n)
        \left(\cos\Theta_n(q)+\varepsilon_n(q)\right),
        \qquad \varepsilon_n(q)\to0,
\]
where \(A_\lambda\) is continuous and positive on \(I_\lambda\)
\begingroup and the phase is
\[
        \Theta_n(q)=nS_\lambda(q/n)-\frac{\pi}{4},
        \qquad
        S_\lambda(t)=h_t\bigl(\varphi_+(t)\bigr).
\]
Since \(\varphi_+\) is \(C^1\) we may differentiate \(S_\lambda\). Using
\(\partial_\varphi h_t(\varphi_+(t))=0\) and
\(\partial_t h_t(\varphi)=-\varphi\) we obtain
\[
        S_\lambda'(t)=-\varphi_+(t).
\]
\endgroup
The assumptions on \(k\) imply that \((k+j)/n\in I_\lambda\) for
\(0\le j\le L-1\).  Hence, for \(0\le j\le L-2\), the mean value theorem gives a point
\(\xi_j\in I_\lambda\) such that
\[
        \Theta_n(k+j+1)-\Theta_n(k+j)
        =n\left(S_\lambda\left(\frac{k+j+1}{n}\right)
              -S_\lambda\left(\frac{k+j}{n}\right)\right)
        =S_\lambda'(\xi_j)
        =-\varphi_+(\xi_j).
\]
Since \(\varphi_+(\xi_j)\in[c_-,c_+]\), we have
\[
        c_-\le \Theta_n(k+j)-\Theta_n(k+j+1)\le c_+<\frac{2\pi}{3}.
\]
Thus the phase decreases by a controlled amount at each successive index.

It remains to use only elementary trigonometry.  Put \(X_j=\Theta_n(k+j)\).  Suppose,
for a contradiction, that \(|\cos X_j|<1/2\) for every \(j=0,\ldots,L-1\).  Then each
\(X_j\) belongs to
\[
        E=\{x\in\mathbb R:\ |\cos x|<1/2\}
        =\bigcup_{m\in\mathbb Z}
        \left(\frac{\pi}{3}+m\pi,\frac{2\pi}{3}+m\pi\right).
\]
The intervals in this union have length \(\pi/3\), and two consecutive intervals are
separated by a distance \(2\pi/3\).  Since
\(0<X_j-X_{j+1}<2\pi/3\), two consecutive values \(X_j\) and \(X_{j+1}\) cannot belong
to two different intervals of the above union.  Consequently all the \(X_j\)'s belong to
a single interval of length \(\pi/3\).  This gives
\[
        X_0-X_{L-1}<\frac{\pi}{3}.
\]
On the other hand, summing the lower bound on the successive decreases gives
\[
        X_0-X_{L-1}
        =\sum_{j=0}^{L-2}(X_j-X_{j+1})
        \ge (L-1)c_->\frac{\pi}{3},
\]
which is impossible.  Therefore, for some \(j\in\{0,\ldots,L-1\}\),
\[
        |\cos\Theta_n(k+j)|\ge\frac12.
\]
Finally, let \(A_*=\min_{t\in I_\lambda}A_\lambda(t)>0\), and choose \(n_0\) so large
that \(|\varepsilon_n(q)|\le1/4\) whenever \(n\ge n_0\) and \(q/n\in I_\lambda\).  For
the index \(q=k+j\) found above,
\[
        \left|\widehat{b_\lambda^n}(q)\right|
        \ge n^{-1/2} A_*\left(\frac12-\frac14\right)
        =\frac{A_*}{4}n^{-1/2}.
\]
This proves the result, with \(c_0=A_*/4\).

\end{proof}
The following corollary is a consequence of Lemma~\ref{lem:osc-robust-lower}: it shows the sharpness of the upper estimate \eqref{eq:Blyudze}. Moreover it will be used in the proof of Corollary~\ref{cor:sharp-three-halves}.
\begin{corollary}\label{cor:l1-short}
For every \(\lambda\in(0,1)\), there are an interval
\([a_1,a_2]\subset(\alpha_0,\alpha_0^{-1})\) and a constant \(c(\lambda)>0\) such
that, for all sufficiently large \(n\),
\[
        \sum_{a_1n\le k\le a_2n}
        |\widehat{b_\lambda^n}(k)|
        \ge c(\lambda)\sqrt n.
\]
Consequently \(\|b_\lambda^n\|_{\ell^1}\ge c(\lambda) \sqrt n\) as \(n\to\infty\).
\end{corollary}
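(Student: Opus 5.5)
The plan is to derive the corollary directly from Lemma~\ref{lem:osc-robust-lower} by partitioning the range \([a_1n,a_2n]\) into consecutive blocks of length \(L\). First I fix \(c_-,c_+\) and \(L\) as in the lemma; for instance \(c_-=\pi/6\), \(c_+=\pi/2\) and \(L=4\), so that \((L-1)c_->\pi/3\). The lemma then supplies \(I_\lambda=[a_\lambda^-,a_\lambda^+]\subset(\alpha_0,\alpha_0^{-1})\), together with constants \(c_0,n_0\). I would take \([a_1,a_2]=I_\lambda\), so that \(a_1=a_\lambda^-\) and \(a_2=a_\lambda^+\). Since \(\psi_\lambda'\) is strictly decreasing and \(c_-<c_+\), we have \(a_1<a_2\).

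For \(n\ge n_0\), set \(k_0=\lceil a_1n\rceil\) and consider the disjoint blocks
\[
\{k_0+mL,\ldots,k_0+mL+L-1\},\qquad 0\le m\le m_n,
\]
where \(m_n\) is the largest integer with \(k_0+m_nL+L-1\le a_2n\). Each such block starts at an index \(k\) with \(k/n\ge a_1\). Its last element satisfies \(k+L-1\le na_\lambda^+\), and in particular \(k/n\in I_\lambda\). The hypotheses of Lemma~\ref{lem:osc-robust-lower} therefore hold for every block, and each block contains an index \(q\) with \(|\widehat{b_\lambda^n}(q)|\ge c_0n^{-1/2}\). Because the blocks are disjoint and lie inside \([a_1n,a_2n]\), summing over blocks gives
\[
\sum_{a_1n\le k\le a_2n}|\widehat{b_\lambda^n}(k)|\ge (m_n+1)\,c_0n^{-1/2}.
\]

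The only remaining point is a counting estimate: \(m_n+1\ge \frac{(a_2-a_1)n}{L}-2\), which is at least \(\frac{(a_2-a_1)n}{2L}\) for \(n\) large. This yields the bound \(c(\lambda)\sqrt n\) with \(c(\lambda)=c_0(a_2-a_1)/(2L)\), after possibly enlarging \(n_0\).

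The final statement follows by dropping terms. The quantity \(\|b_\lambda^n\|_{\ell^1}=\sum_{k}|\widehat{b_\lambda^n}(k)|\) dominates the partial sum over \([a_1n,a_2n]\). I do not expect a real obstacle here, since all the analytic content (the uniform stationary-phase expansion and the phase-increment argument) is already contained in Lemma~\ref{lem:osc-robust-lower}. The one thing to check carefully is that every block satisfies both conditions of the lemma, namely \(k/n\in I_\lambda\) and \(k+L-1\le na_\lambda^+\). This is ensured by the choice of \(k_0\) and \(m_n\).
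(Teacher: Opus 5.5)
Your proposal is correct and follows essentially the same route as the paper: you take \([a_1,a_2]=I_\lambda\), tile \([a_1n,a_2n]\) with disjoint blocks of \(L\) consecutive indices, and apply Lemma~\ref{lem:osc-robust-lower} to each block to obtain a coefficient of modulus at least \(c_0n^{-1/2}\). You then sum over the order-\(n\) many blocks and dominate by the \(\ell^1\) norm. Your explicit choice of \(c_\pm\), \(L\), \(k_0\), \(m_n\), and the counting bound simply fills in what the paper states briefly as ``there are at least \(cn\) such blocks.''
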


\begin{proof}
Set \(a_1=a_\lambda^-\) and \(a_2=a_\lambda^+\). Choose disjoint blocks of the form
\[
        \{k,k+1,\ldots,k+L-1\}
\]
inside the interval \([a_\lambda^- n,a_\lambda^+ n]\).  There are at least
\(c n\) such blocks, with \(c>0\) depending on \(\lambda,c_-,c_+\).  By
Lemma~\ref{lem:osc-robust-lower}, each block contains one coefficient with modulus at
least \(c_0n^{-1/2}\).  Summing over these blocks gives
\[
        \sum_{a_1n\le k\le a_2n}
        |\widehat{b_\lambda^n}(k)|
        \ge c(\lambda) n\cdot n^{-1/2}
        \ge c(\lambda) \sqrt n .
\]
The final statement follows because the left-hand side is bounded above by
\(\|b_\lambda^n\|_{\ell^1}\).
\end{proof}
\begin{corollary}\label{cor:sharp-three-halves}
Let \(\lambda\in(0,1)\), \(\alpha>0\), \(B=b_\lambda\), and \(f_0=1\). Then the
condition \(M>3/2\) in the weighted summability estimate of
Lemma~\ref{lem:weighted-summability-kappahatB} is sharp. More precisely,
\[
        \sum_{n\ge1}\sum_{k\in\mathbb Z}
        \left|\widehat{b_\lambda^n}(k)\right|
        (1+|k+\alpha n|)^{-M}
        =\infty
\]
for every \(M\le3/2\).
\end{corollary}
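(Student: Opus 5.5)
It suffices to treat $M=3/2$: the weight $(1+|k+\alpha n|)^{-M}$ decreases in $M$, so divergence at $M=3/2$ gives divergence for every $M\le 3/2$. The plan is to restrict the double series to the oscillatory band supplied by Corollary~\ref{cor:l1-short}, where the row $\ell^1$-mass is of order $\sqrt n$, and to note that on this band the projected frequency $k+\alpha n$ is comparable to $n$.

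Fix the interval $[a_1,a_2]\subset(\alpha_0,\alpha_0^{-1})$, the constant $c(\lambda)>0$ and the threshold $n_0$ given by Corollary~\ref{cor:l1-short}. For $n\ge n_0$ and $a_1n\le k\le a_2n$ we have $k\ge0$, hence
\[
0\le k+\alpha n\le (a_2+\alpha)n .
\]
It follows that
\[
(1+|k+\alpha n|)^{-3/2}\ge \bigl(1+(a_2+\alpha)n\bigr)^{-3/2}\ge C_{\lambda,\alpha}\,n^{-3/2}
\]
for a constant $C_{\lambda,\alpha}>0$.

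All terms in the double series are nonnegative, so we may discard everything outside the band and obtain
\[
\sum_{n\ge1}\sum_{k\in\mathbb Z}\bigl|\widehat{b_\lambda^n}(k)\bigr|(1+|k+\alpha n|)^{-3/2}
\ge C_{\lambda,\alpha}\sum_{n\ge n_0} n^{-3/2}\sum_{a_1n\le k\le a_2n}\bigl|\widehat{b_\lambda^n}(k)\bigr| .
\]
Applying Corollary~\ref{cor:l1-short} to the inner sum bounds the right-hand side below by
\[
C_{\lambda,\alpha}\,c(\lambda)\sum_{n\ge n_0} n^{-3/2}\,n^{1/2}=C_{\lambda,\alpha}\,c(\lambda)\sum_{n\ge n_0} n^{-1}=\infty .
\]
This divergence is what makes the threshold $M>3/2$ in Lemma~\ref{lem:weighted-summability-kappahatB} sharp, since that lemma bounds the row sum by $(1+|n|)^{1/2-M}$.

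All of the analytic input sits in Corollary~\ref{cor:l1-short}, which in turn rests on Lemma~\ref{lem:osc-robust-lower} and the uniform stationary-phase asymptotics. Given that corollary, the only points that need checking are:
\begin{itemize}
\item the band lies in $k\ge0$, so that $|k+\alpha n|$ is genuinely $O(n)$ there;
\item positivity of the summands, which justifies discarding the terms outside the band.
\end{itemize}
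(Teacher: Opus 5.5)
Your proof is correct and follows the paper's argument: restrict to the band $a_1n\le k\le a_2n$ from Corollary~\ref{cor:l1-short}, bound $1+|k+\alpha n|$ above by a constant times $1+n$ there, and arrive at the divergent series $\sum_n n^{1/2-M}$. The only cosmetic difference is that you first reduce to $M=3/2$ using monotonicity of the weight in $M$, whereas the paper carries a general $M\le 3/2$ through directly.
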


\begin{proof}
By Corollary~\ref{cor:l1-short}, there are numbers
\(0<a_1<a_2\) and a constant \(c=c(\lambda)>0\) such that, for all sufficiently large \(n\),
\[
        \sum_{a_1n\le k\le a_2n}
        |\widehat{b_\lambda^n}(k)|\ge c\sqrt n.
\]
For \(a_1n\le k\le a_2n\), we have
\[
        1+|k+\alpha n|\le C_{a_2,\alpha}(1+n).
\]
Therefore
\[
\begin{aligned}
        \sum_{k\in\mathbb Z}
        |\widehat{b_\lambda^n}(k)|
        (1+|k+\alpha n|)^{-M}
        &\ge
        C_{a_2,\alpha,M}^{-1}(1+n)^{-M}
        \sum_{a_1n\le k\le a_2n}
        |\widehat{b_\lambda^n}(k)|  \\
        &\ge C(1+n)^{1/2-M}.
\end{aligned}
\]
The series \(\sum_n (1+n)^{1/2-M}\) diverges exactly when \(M\le3/2\). This proves
the claim.
\end{proof}

\appsubsection{Bounds on the coefficients of powers of finite Blaschke products}{app:finite-blaschke-coeff-bounds}
We now return to the case of finite Blaschke products \(B\).    We keep the notation of the
main text: if
\[
        f_n=B^n f_0,
        \qquad
        c(k,n)=\widehat{f_n}(k),
\]
then, for \(f_0\equiv1\),
\[
        c(k,n)=\widehat{B^n}(k).
\]
If \(\widehat{f_0}\in\ell^1(\mathbb Z)\), i.e. if $f_0$ belongs to the Wiener algebra $W=\{f:\widehat f\in\ell^1(\mathbb Z)\}$, multiplication on the circle gives
\[
        c(\cdot,n)=\widehat{B^n}*\widehat{f_0}.
\]
Thus
\[
        \sup_{k\in\mathbb Z}|c(k,n)|
        \le
        \|\widehat{f_0}\|_{\ell^1}\,
        \|\widehat{B^n}\|_{\ell^\infty}.
\]
By \cite[Theorem~1]{BFZIMRN}, this gives
\[
        \sup_{k\in\mathbb Z}|c(k,n)|
        \leq
        \ C_B\|\widehat{f_0}\|_{\ell^1}\,n^{-1/N_B},
\]
where $C_B>0$ and \(N_B\) is defined as follows.  If
\[
        B(e^{i\theta})=\exp(i\psi_B(\theta)),
        \qquad
        \psi_B(\theta+2\pi)=\psi_B(\theta)+2\pi\deg(B),
\]
then \(N_B\ge3\) is the maximal order of degeneracy of the zeros of
\(\psi_B''\), in the sense of \cite{BFZIMRN}.  Moreover,
\cite[Theorem~2]{BFZIMRN} shows that this exponent cannot be replaced by one exponent valid for all
finite Blaschke products: for every integer \(N\ge3\), there exists a finite Blaschke product
\(B\) such that
\[
        c_B n^{-1/N} \leq \|\widehat{B^n}\|_{\ell^\infty}\leq C_B n^{-1/N}, 
\]
where $c_B>0$. This is why the estimate is stated with the exponent \(N_B\).

We also need an exponentially decaying estimate when \(k/n\) is away from the interval where the coefficients can be large.  For a single Blaschke factor, the corresponding is result is stated and proved in
\cite[Proposition~3(1)]{SzehrZaroufJAM}.  The next proposition gives the same kind of estimate for finite Blaschke products.  We use the interval \([m_B,M_B]\), defined
in Theorem~\ref{thm:general-meyer}, as a simple range for the ratios \(k/n\).  If \(k/n\) stays in a
compact interval disjoint from this range, then \(\widehat{B^n}(k)\) decreases
exponentially in \(|n|\).

This estimate is used in Section~\ref{sec:meyer-measures}, in particular in the
proof of Theorem~\ref{thm:general-meyer}, through
Lemma~\ref{lem:radon-tempered-projected-tail}.  We include the proof because it
only uses the formula for Fourier coefficients as contour integrals on circles of
radius different from \(1\).

\begin{prop}\label{prop:exp-tails-Bn}
Let $B$ be a finite Blaschke product with zeros $\lambda_1,\ldots,\lambda_d$, counted with multiplicity, and set
\[
        m_B=\sum_{j=1}^d\frac{1-|\lambda_j|^2}{(1+|\lambda_j|)^2},
        \qquad
        M_B=\sum_{j=1}^d\frac{1-|\lambda_j|^2}{(1-|\lambda_j|)^2}.
\]
For every compact interval $J\subset\mathbb R\setminus[m_B,M_B]$, there exists a constant $\rho_J>0$ such that
\[
        |\widehat{B^n}(k)|\le e^{-\rho_J |n|}
\]
whenever $n\neq0$ and $k/n\in J$.
\end{prop}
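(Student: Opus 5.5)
The plan is to write $\widehat{B^n}(k)$ as a contour integral over a circle $|z|=r$ with $r$ slightly different from $1$, and to choose $r$ depending on which side of $[m_B,M_B]$ the ratio $t=k/n$ lies. The only input is that $\log|B|$ is harmonic near $\mathbb T$, with radial derivative on $\mathbb T$ equal to the phase derivative $\psi_B'$.

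\emph{Reduction to $n>0$.} On $\mathbb T$ we have $B^{-m}=\overline{B^m}$, so for $n<0$
\[
\widehat{B^n}(k)=\overline{\widehat{B^{|n|}}(-k)},
\]
and $(-k)/|n|=k/n\in J$. Hence it suffices to treat $n\ge1$.

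\emph{Contour representation.} The poles of $B$ are at $1/\overline{\lambda_j}$, so $B^n$ is holomorphic on $|z|<R_*=\min_j 1/|\lambda_j|>1$. For every $0<r<R_*$ and every $k$, Cauchy's formula gives
\[
\widehat{B^n}(k)=\frac{1}{2\pi i}\oint_{|z|=r}B(z)^n z^{-k-1}\,dz ,
\]
and therefore
\[
|\widehat{B^n}(k)|\le \Bigl(\max_{|z|=r}|B(z)|\Bigr)^n r^{-k}=\exp\Bigl(n\bigl(\log\max_{|z|=r}|B|-t\log r\bigr)\Bigr).
\]
There is no multiplicative constant here, which explains the constant $1$ in the statement.

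\emph{Local expansion of $\log|B|$.} Write $r=e^{\sigma}$ with $|\sigma|$ small. The function $u(\sigma,\theta)=\log|B(e^{\sigma+i\theta})|$ is smooth on a closed annulus around $\mathbb T$, and $u(0,\theta)=0$. By the Cauchy--Riemann equations in the variables $(\log r,\theta)$ applied to $\log B$, we get $\partial_\sigma u(0,\theta)=\psi_B'(\theta)\in[m_B,M_B]$. Taylor's formula with a uniform bound $C$ on $\partial_\sigma^2u$ then gives
\[
\sigma\,\psi_B'(\theta)-C\sigma^2\le u(\sigma,\theta)\le \sigma\,\psi_B'(\theta)+C\sigma^2,
\]
uniformly in $\theta$. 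Write $J=[a,b]$, and split into two cases.

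\emph{Case $b<m_B$.} Take $\sigma=-s<0$ with $s>0$ small. Then $\max_\theta u\le -s\,m_B+Cs^2$ and $-t\log r=ts\le bs$. The exponent is therefore at most
\[
-ns\,(m_B-b-Cs).
\]
Fixing $s=(m_B-b)/(2C)$ (capped so that the annulus stays inside the region of smoothness) gives the rate $\rho_J=s(m_B-b)/2$.

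\emph{Case $a>M_B$.} Take $\sigma=s>0$ small with $e^{s}<R_*$. Then $\max u\le sM_B+Cs^2$ and $-t\log r\le -as$. The exponent is at most
\[
-ns\,(a-M_B-Cs),
\]
and the same choice of $s$ works.

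A compact $J$ disjoint from $[m_B,M_B]$ is a union of at most two pieces, one of each type, so taking the smaller rate finishes the proof. Negative $t$ is covered automatically by the first case, although for $n>0$ those coefficients vanish anyway.

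The main obstacle is the uniformity of the second-order Taylor bound in $\theta$, together with identifying the radial derivative of $\log|B|$ with $\psi_B'$. Both follow from the smoothness of $B$ on a closed annulus $e^{-s_0}\le|z|\le e^{s_0}$ with $e^{s_0}<R_*$. Alternatively, the identification can be checked factor by factor: for each factor, $\partial_\sigma\log|b_{\lambda_j}(e^{\sigma+i\theta})|$ at $\sigma=0$ equals the Poisson kernel term $\frac{1-|\lambda_j|^2}{|e^{i\theta}-\lambda_j|^2}$.
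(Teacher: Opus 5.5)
Your proof is correct, and its skeleton matches the paper's. Both arguments bound $|\widehat{B^n}(k)|\le(\max_{|z|=r}|B|)^n r^{-k}$ by Cauchy's formula on a circle $|z|=r$, taking $r<1$ for ratios below $m_B$ and $r>1$ for ratios above $M_B$. Both also reduce $n<0$ to $n>0$ via $B^{-m}=\overline{B^m}$ on $\mathbb T$.

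The difference is in how you control $\max_{|z|=r}|B|$. The paper bounds it by the product of the exact one-factor maxima, $L_-(r)=\prod_j\frac{r+a_j}{1+a_jr}$ and $L_+(R)=\prod_j\frac{R-a_j}{1-a_jR}$. It then reads off $m_B$ and $M_B$ as the limits of $\log L_\pm/\log r$ at $r=1$. This is fully explicit, giving closed-form $r$, $R$ and $\rho_\pm$, and needs no remainder estimate.

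You instead expand the harmonic function $\log|B(e^{\sigma+i\theta})|$ to first order in $\sigma$. You identify the radial derivative with $\psi_B'$ via Cauchy--Riemann and control the remainder by a uniform bound on $\partial_\sigma^2\log|B|$ on a fixed annulus. Your symmetric choice $e^{s_0}<R_*$ also forces $e^{-s_0}>\max_j|\lambda_j|$, so $B$ is zero-free there and $\log|B|$ really is smooth.

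Your constant $C$ is less explicit, but your route is intrinsic and proves more. Since you only use $\min_\theta\psi_B'$ and $\max_\theta\psi_B'$, the same computation gives exponential decay for every compact $J$ disjoint from $[\min_\theta\psi_B',\max_\theta\psi_B']$. This interval is contained in $[m_B,M_B]$ and can be strictly smaller. The paper's factor-wise product only sees $\sum_j\min\psi_{b_{\lambda_j}}'$ and $\sum_j\max\psi_{b_{\lambda_j}}'$. For instance, for the paper's symmetric family $B_{d,\lambda}$ with $d\ge2$, one has $m_B=d\frac{1-\lambda}{1+\lambda}<d\frac{1-\lambda^d}{1+\lambda^d}=\min_\theta\psi_B'$. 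The sharper interval would enlarge the admissible range of $\alpha$ in the results that rely on this proposition.
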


\begin{proof}
Write
\[
        B(z)=\prod_{j=1}^d b_{\lambda_j}(z),
        \qquad
        b_{\lambda_j}(z)=\frac{z-\lambda_j}{1-\overline{\lambda_j}z}.
\]
and put \(a_j=|\lambda_j|\).  

We first estimate the coefficients on the left of the interval \([m_B,M_B]\).  For
\(0<r<1\), the elementary estimate for one Blaschke factor gives
\[
        \max_{|z|=r}|b_{\lambda_j}(z)|
        \leq \frac{r+a_j}{1+a_jr},
\]
see Garnett~\cite{Garnett1981}.  This is also the estimate used in
\cite[Proposition~3(1)]{SzehrZaroufJAM} for a single factor.  Hence
\[
        \max_{|z|=r}|B(z)|
        \leq L_-(r)=\prod_{j=1}^d\frac{r+a_j}{1+a_jr}.
\]
Moreover
\[
        \lim_{r\to1^-}\frac{\log L_-(r)}{\log r}
        =\sum_{j=1}^d\frac{1-a_j}{1+a_j}
        =m_B.
\]
Thus, if \(\sigma<m_B\), we may choose \(r<1\), close enough to \(1\), such that
\[
        L_-(r)r^{-\sigma}<1.
\]
Indeed, \(\log r<0\), so this follows directly from the preceding limit.

We now estimate the right tail.  Choose \(R>1\), close enough to \(1\), so that \(B\) is holomorphic on \(|z|\leq R\).  For one factor, a direct computation gives
\[
        \max_{|z|=R}|b_{\lambda_j}(z)|
        =\frac{R-a_j}{1-a_jR}.
\]
Hence
\[
        \max_{|z|=R}|B(z)|
        \leq L_+(R)=\prod_{j=1}^d\frac{R-a_j}{1-a_jR}.
\]
Also
\[
        \lim_{R\to1^+}\frac{\log L_+(R)}{\log R}
        =\sum_{j=1}^d\frac{1+a_j}{1-a_j}
        =M_B.
\]
Therefore, if \(\sigma>M_B\), we may choose \(R>1\), close enough to \(1\), such that
\[
        L_+(R)R^{-\sigma}<1.
\]

Let \(J\subset\mathbb R\setminus[m_B,M_B]\) be compact.  Choose
\(\sigma_-<m_B<M_B<\sigma_+\) such that
\[
        J\subset(-\infty,\sigma_-]\cup[\sigma_+,+\infty).
\]
Then choose \(r<1<R\) close enough to 1 so that
\[
        L_-(r)r^{-\sigma_-}<1,
        \qquad
        L_+(R)R^{-\sigma_+}<1.
\]
Set
\[
        \rho_-=-\log\bigl(L_-(r)r^{-\sigma_-}\bigr)>0,
        \qquad
        \rho_+=-\log\bigl(L_+(R)R^{-\sigma_+}\bigr)>0.
\]

Assume first that \(n\geq1\).  Since \(B^n\) is holomorphic in the unit disk,
\[
        \widehat{B^n}(k)=0\qquad, k<0.
\]
For \(k\geq0\), the contour formula on \(|z|=r\) gives
\[
        |\widehat{B^n}(k)|
        \leq L_-(r)^n r^{-k}.
\]
Hence if \(0\leq k\leq \sigma_- n\) then
\[
        |\widehat{B^n}(k)|
        \leq \bigl(L_-(r)r^{-\sigma_-}\bigr)^n
        =e^{-\rho_-n}.
\]
Similarly the contour formula on \(|z|=R\) gives
\[
        |\widehat{B^n}(k)|
        \leq L_+(R)^n R^{-k}.
\]
Hence if \(k\geq\sigma_+n\) then
\[
        |\widehat{B^n}(k)|
        \leq \bigl(L_+(R)R^{-\sigma_+}\bigr)^n
        =e^{-\rho_+n}.
\]
This proves the required estimate for positive powers. For negative powers, write \(n=-m\) with \(m\ge1\). Since
\(B^{-m}=\overline{B^m}\) on \(\mathbb T\), conjugating the Fourier expansion of
\(B^m\) gives
\[
\widehat{B^{-m}}(k)=0\quad(k>0),
\qquad
\widehat{B^{-m}}(k)=\overline{\widehat{B^m}(-k)}\quad(k\le0).
\]
The estimate for \(B^m\) therefore gives the same
estimate for \(B^{-m}\).  Taking
\[
        \rho_J=\min(\rho_-,\rho_+)
\]
completes the proof.
\end{proof}


%
%
%
\appsection{Asymptotic formulas for powers of a Blaschke factor}{app:asymptotic-formulas}
The results below are taken from \cite{BFZCA}. Throughout this appendix we assume without loss of generality that $\lambda\in(0,1)$. Here \(n\ge 1\) denotes the power and \(k\in\mathbb Z_+\) denotes the Fourier coefficient index in \(\widehat{b_\lambda^n}(k)\).
\begingroup
In order to state the asymptotic formulas on these coefficients we use the following notation.
For quantities \(F\) and \(G\) with \(G\neq0\) we write
\(F\sim G\) if \(F/G\to1\) as \(n\to\infty\) always within the range of $k$ specified in the corresponding statement. For nonnegative quantities
\(F\) and \(G\), we write \(F\asymp G\) if there exist constants \(c,C>0\), independent of \(n\) and \(k\) in the range under consideration, such that \(cG\le F\le CG\).
\endgroup

For real arguments $x$ the Airy function $Ai(x)$ can be defined
as an improper Riemann integral
\[
Ai(x)=\frac{1}{\pi}\int_{0}^{\infty}\cos\left(\frac{t^{3}}{3}+xt\right)\,\dd t.
\]

\begingroup
We follow the notation of \cite{BFZCA}. Whenever a remainder term appears
in the approximations of \(\widehat{b_\lambda^n}(k)\), we keep its dependence on \(k\) visible in the notation. This is especially the case near the \(k\)-transition points \(n(1-\lambda)/(1+\lambda)\) and \(n(1+\lambda)/(1-\lambda)\). Since the error terms depend on the coefficient index \(k\), we introduce \(Ai_*^{(n,k)}(x)\), where \(x\) denotes the Airy variable
obtained by measuring the distance from \(k\) to the relevant transition point. We set
\[
Ai_*^{(n,k)}(x)
=
\begin{cases}
Ai(x)\bigl(1+\varepsilon_{n,1}(k)\bigr)
+(1+|x|)^{-1/4}\varepsilon_{n,2}(k), & x<0,\\[0.4em]
Ai(x)\bigl(1+\varepsilon_{n,1}(k)\bigr), & x\ge 0,
\end{cases}
\]
where the error terms \(\varepsilon_{n,1}(k)\) and
\(\varepsilon_{n,2}(k)\) tend to \(0\) uniformly for \(k\) in the corresponding transition region.
\endgroup

{Below $\alpha_{0}$ is given by $\alpha_{0}=\frac{1-\lambda}{1+\lambda}$. For each ratio \(t=k/n\) occurring below we set}
\[
\Phi(z)=\Phi_{k/n}(z)=\log\left(z^{-\frac{k}{n}}b_{\lambda}(z)\right),
\]
where $\log$ denotes a branch of the complex logarithm chosen in
the following way: if $k/n\le c<\alpha_{0}^{-1}$, then we can take
the branch cut $[0,\infty)$ and fix $\log(-1)=i\pi$, and if $k/n\ge c>\alpha_{0}$,
then we can take the principal branch of the complex logarithm. In
particular, if $\alpha_{0}<c_{1}\le k/n\le c_{2}<\alpha_{0}^{-1}$,
then we could take either of these two definitions. We also define
the function $h$ on $[0,\pi]$ as follows:
\[
h(\varphi)=h_{k/n}(\varphi)=-i\Phi_{k/n}(e^{i\varphi})=\arg\left(\left(z^{-k/n}b_{\lambda}(z)\right)_{\vert z=e^{i\varphi}}\right)\qquad\varphi\in[0,\pi].
\]
\begin{thm}
\label{Th_All_Regions} \label{Exponentiel decay of the Fourier coefficients of the powers of a Blaschke factor}
Let $\alpha\in(0,\alpha_{0})$. Consider a sequence $\omega(n^{1/3})$
such that $\omega(n^{1/3})/n^{1/3}\rightarrow\infty$ as $n\rightarrow\infty$
and assume additionally that $\omega(n^{1/3})=o(n)$ as $n\rightarrow\infty$.
The following asymptotic formulas for the $k^{{\rm th}}-$Fourier
coefficients of $b_{\lambda}^{n}$ hold as $n$ tends to $+\infty$.

(1) If $k$ is fixed (Region I), then
\[
\widehat{b_{\lambda}^{n}}(k)\sim\frac{(-\lambda)^{n-k}\left(n(1-\lambda^{2})\right)^{k}}{k!}.
\]

(2) If $k=k(n)\rightarrow\infty$ as $n\rightarrow\infty$ with $k\leq\alpha n$
(Region II) or $k\geq\alpha^{-1}n$ (Region VIII), then
\[
\widehat{b_{\lambda}^{n}}(k)\sim\frac{1}{\sqrt{2k\pi}}\frac{1}{\left[(\alpha_{0}-k/n)(\alpha_{0}^{-1}-k/n)\right]^{1/4}}\left(\frac{b_{\lambda}(z_{+})}{z_{+}^{k/n}}\right)^{n},
\]
where $z_{+}$ is defined by
\[
z_{+}=z_{+}(k/n)=\frac{\frac{k}{n}(1+\lambda^{2})-(1-\lambda^{2})}{2\lambda\frac{k}{n}}+\sqrt{\left(\frac{\frac{k}{n}(1+\lambda^{2})-(1-\lambda^{2})}{2\lambda\frac{k}{n}}\right)^{2}-1}.
\]

(3) If $k\in[\alpha n,\alpha_{0}n-\omega(n^{1/3})]$ (Region III),
then
\[
\widehat{b_{\lambda}^{n}}(k)\sim\frac{(-1)^{n-k}}{\sqrt{2n\pi}}\frac{1}{\sqrt{k/n}\left[(\alpha_{0}^{-1}-k/n)(\alpha_{0}-k/n)\right]^{1/4}}\exp\left(-\frac{2}{3}n\abs{\gamma_{{\alpha_{0}}}}^{3}\right),
\]
where $\gamma_{\alpha_{0}}^{3}$ is given by
\[
\gamma_{\alpha_{0}}^{3}=\frac{3}{2}\left[\Phi(z_{+})-i\pi\left(1-\frac{k}{n}\right)\right],
\]
and in particular
\[
\gamma_{\alpha_{0}}^{3}\sim-\frac{\left(\alpha_{0}-k/n\right)^{3/2}(1+\lambda)^{3/2}}{(\lambda(1-\lambda))^{1/2}},\qquad k/n\to\alpha_{0},\qquad k/n<\alpha_{0}.
\]

(4) If $k\in[\alpha_{0}^{-1}n+\omega(n^{1/3}),\alpha^{-1}n]$ (Region
VII), then
\[
\widehat{b_{\lambda}^{n}}(k)\sim\frac{1}{\sqrt{2n\pi}}\frac{1}{\sqrt{k/n}\left[(k/n-\alpha_{0}^{-1})(k/n-\alpha_{0})\right]^{1/4}}\exp\left(-\frac{2}{3}n\abs{\gamma_{{\alpha_{0}}^{-1}}}^{3}\right),
\]
where $\gamma_{\alpha_{0}^{-1}}^{3}$ is given by
\[
\gamma_{\alpha_{0}^{-1}}^{3}=\frac{3}{2}\Phi(z_{+}),
\]
and in particular
\[
\gamma_{\alpha_{0}^{-1}}^{3}\sim-\frac{(k/n-\alpha_{0}^{-1})^{3/2}(1-\lambda)^{3/2}}{\left(\lambda(1+\lambda)\right)^{1/2}},\qquad k/n\to\alpha_{0}^{-1},\qquad k/n>\alpha_{0}^{-1}.
\]

(5) If $k\in[\alpha_{0}n-\omega(n^{1/3}),\alpha_{0}n+\omega(n^{1/3})]$
(Region IV), then
\[
\widehat{b_{\lambda}^{n}}(k)=(-1)^{n-k}\frac{1+\lambda}{\bigl(\lambda(1-\lambda)n\bigr)^{1/3}}\,
Ai_*^{(n,k)}\left(n^{2/3}\gamma_{\alpha_{0}}^{2}\right),
\]
where $\gamma_{\alpha_{0}}^{2}$ is asymptotically given by
\[
\gamma_{\alpha_{0}}^{2}\sim\frac{\left(\alpha_{0}-k/n\right)(1+\lambda)}{(\lambda(1-\lambda))^{1/3}},\qquad k/n\to\alpha_{0}.
\]

(6) If $k\in[\alpha_{0}n+\omega(n^{1/3}),\alpha_{0}^{-1}n-\omega(n^{1/3})]$
(Region V), then
\[
\widehat{b_{\lambda}^{n}}(k)=\sqrt{\frac{2}{n\pi}}\frac{\cos\left(nh(\varphi_{+})-\pi/4\right)+\varepsilon_n(k)}{\sqrt{k/n}\left[(\alpha_{0}^{-1}-k/n)(k/n-\alpha_{0})\right]^{1/4}},
\]
where $h=h_{k/n}$ is the function defined above and the parameter $\varphi_{+}\in[0,\pi]$
is defined by
\[
e^{i\varphi_{+}}=z_{+}=\frac{\frac{k}{n}(1+\lambda^{2})-(1-\lambda^{2})}{2\lambda\frac{k}{n}}+i\sqrt{1-\left(\frac{\frac{k}{n}(1+\lambda^{2})-(1-\lambda^{2})}{2\lambda\frac{k}{n}}\right)^{2}}.
\]
Here the error term \(\varepsilon_{n}(k)\) tends to \(0\) uniformly for \(k\) in Region V.

(7) If $k\in[\alpha_{0}^{-1}n-\omega(n^{1/3}),\alpha_{0}^{-1}n+\omega(n^{1/3})]$
(Region VI), then
\[
\widehat{b_{\lambda}^{n}}(k)=\frac{1-\lambda}{\bigl(\lambda(1+\lambda)n\bigr)^{1/3}}\,
Ai_*^{(n,k)}\left(n^{2/3}\gamma_{\alpha_{0}^{-1}}^{2}\right),
\]
where
\[
\gamma_{\alpha_{0}^{-1}}^{2}\sim\frac{(k/n-\alpha_{0}^{-1})(1-\lambda)}{\left(\lambda(1+\lambda)\right)^{1/3}},\qquad k/n\to\alpha_{0}^{-1}.
\]
\end{thm}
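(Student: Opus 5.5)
The approach is a uniform saddle-point analysis of the Cauchy integral
\[
\widehat{b_\lambda^n}(k)=\frac{1}{2\pi i}\oint_{|z|=\rho}\frac{b_\lambda(z)^n}{z^{k+1}}\,\dd z
=\frac{1}{2\pi i}\oint_{|z|=\rho}e^{n\Phi_{k/n}(z)}\,\frac{\dd z}{z},
\qquad 0<\rho<1/\lambda,
\]
with $\Phi_t(z)=\log(z^{-t}b_\lambda(z))$ and the branch conventions fixed before the statement. An alternative route would express the coefficients through Jacobi polynomials with a varying parameter, as in \cite[Formula~(1.1)]{SzehrZaroufJAT2022}, and import the known Jacobi asymptotics. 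I prefer the direct route, since it makes visible where the regions come from.

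\emph{Saddle points.} The equation $\Phi_t'(z)=0$ reads $z b_\lambda'(z)/b_\lambda(z)=t$, that is,
\[
\lambda t z^2-\bigl(t(1+\lambda^2)-(1-\lambda^2)\bigr)z+\lambda t=0 .
\]
Its roots $z_\pm$ satisfy $z_+z_-=1$, and $z_+$ is the quantity displayed in (2) and (6). The discriminant vanishes exactly at $t=\alpha_0$ and $t=\alpha_0^{-1}$, where the roots coalesce at $-1$ and $+1$ respectively. In between the roots are $e^{\pm i\varphi_+}$ on $\mathbb T$. Outside $[\alpha_0,\alpha_0^{-1}]$ they are real, negative for $t<\alpha_0$ and positive for $t>\alpha_0^{-1}$. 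This trichotomy organizes the whole proof.

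\emph{Non-oscillatory regions.} Region I (fixed $k$) is elementary. I would expand $b_\lambda(z)=-\lambda+(1-\lambda^2)z+O(z^2)$ and extract the coefficient of $z^k$ in $b_\lambda^n$, whose leading term is $\binom nk(-\lambda)^{n-k}(1-\lambda^2)^k\sim(-\lambda)^{n-k}(n(1-\lambda^2))^k/k!$. For Regions II and VIII, with $t$ bounded away from the endpoints, I would deform the contour to the circle through the real saddle lying in the annulus $\lambda<|z|<1/\lambda$. That saddle is a strict maximum of $\operatorname{Re}\Phi_t$ along the circle, so the classical Laplace method gives the Gaussian factor $(2\pi n|\Phi_t''|)^{-1/2}$. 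A computation of $\Phi_t''(z_+)$ in terms of the discriminant produces the factor $[(\alpha_0-t)(\alpha_0^{-1}-t)]^{-1/4}/\sqrt{k}$. As $k\to\infty$ with $k\le\alpha n$, the uniformity needs a little care, but the saddle stays nondegenerate there.

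\emph{Oscillatory region.} For Region V I would integrate over $\mathbb T$ itself. The phase $h_t(\varphi)$ is real with two nondegenerate stationary points $\pm\varphi_+$. Stationary phase gives two complex-conjugate contributions, which combine into $\sqrt{2/(n\pi)}\cos(nh(\varphi_+)-\pi/4)$ with the same amplitude as before, and $|h_t''(\varphi_+)|$ is again read off from the discriminant. To make the error $\varepsilon_n(k)$ uniform up to distance $\omega(n^{1/3})/n$ from the endpoints, the remainder in stationary phase must be controlled by $(n|h''|^{3/2})^{-1}$, with $|h''|\asymp(t-\alpha_0)^{1/2}$. This bound is $o(1)$ precisely when $n(t-\alpha_0)^{3/2}\to\infty$, i.e. when $k-\alpha_0 n\gg n^{1/3}$. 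The same scaling then applies to Regions III and VII.

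\emph{Transition regions (main obstacle).} In Regions IV and VI the two saddles coalesce, and I would use the Chester--Friedman--Ursell cubic change of variables $\Phi_t(z)=\tfrac13u^3-\gamma^2u+\text{const}$, chosen uniformly in $t$ near $\alpha_0$ (resp. $\alpha_0^{-1}$). This turns the integral into an Airy integral plus a controlled remainder. It yields $Ai(n^{2/3}\gamma^2)$ with the prefactors $(1+\lambda)(\lambda(1-\lambda)n)^{-1/3}$ (resp. with $1-\lambda$ and $\lambda(1+\lambda)$), and the sign $(-1)^{n-k}$ coming from the saddle at $-1$. The same uniform expansion, read where $n^{2/3}\gamma^2\to+\infty$, gives the exponentially small formulas of Regions III and VII via the Airy asymptotics $Ai(x)\approx e^{-\frac23x^{3/2}}/(2\sqrt\pi x^{1/4})$. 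Read where $n^{2/3}\gamma^2\to-\infty$, it matches Region V; this is why the remainder $Ai_*^{(n,k)}$ carries the $(1+|x|)^{-1/4}$ term for $x<0$. I expect the hardest part to be verifying that the cubic map is analytic and univalent uniformly in $t$, and that the error terms are uniform across the overlap $|k-\alpha_0n|\asymp\omega(n^{1/3})$. For these points I would follow \cite{BFZCA} and \cite[Proposition~6]{SzehrZaroufJAM}.
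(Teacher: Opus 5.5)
Your outline follows the route behind this statement. The paper does not reprove the theorem but imports it from \cite{BFZCA}, whose analysis is of this saddle-point/stationary-phase type, with an Airy-type treatment where the two saddles coalesce at $\mp1$; your saddle equation, discriminant, Gaussian and Airy prefactors, and the sign $(-1)^{n-k}$ all agree with the stated formulas. There are two small slips to fix. First, for $t<\frac{1-\lambda^2}{2(1+\lambda^2)}$, which Region~II contains, the saddle $z_+$ lies in $(-\lambda,0)$, not in the annulus $\lambda<|z|<1/\lambda$; this is harmless, since any radius below $1/\lambda$ is admissible and $|b_\lambda|$ on $|z|=r<1$ is maximal at $z=-r$. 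Second, because $h'''$ stays bounded away from $0$ near coalescence, the stationary-phase remainder there has relative order $(n|h''|^{3})^{-1}$, not $(n|h''|^{3/2})^{-1}$; only the former gives your criterion $n(t-\alpha_0)^{3/2}\to\infty$.
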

\newpage
\subsection*{Table summing up the asymptotic behavior of the coefficients}

\begin{center}
\captionsetup{type=table}
\caption{Asymptotic regimes for $\widehat{b_{\lambda}^{n}}(k)$ as $n\to\infty$.}
\label{tab:asymptotics}
\resizebox{\textwidth}{!}{
\begin{tabular}{|c|c|c|}
\hline
Values of $k(n)$ in interval  & Asymptotics of $\widehat{b_{\lambda}^{n}}(k)$  & Region\tabularnewline
\hline
\hline
$(0,\,\alpha n]$  & $\frac{1}{\sqrt{k/n}\left[(\alpha_{0}-k/n)(\alpha_{0}^{-1}-k/n)\right]^{1/4}}\frac{1}{\sqrt{n}}\left(\frac{b_{\lambda}(z_{+})}{z_{+}^{k/n}}\right)^{n}$  & I-II\tabularnewline
\hline
$(\alpha n,\,\alpha_{0}n-\omega(n^{1/3})]$  & $\frac{1}{\sqrt{k/n}\left[(\alpha_{0}^{-1}-k/n)(\alpha_{0}-k/n)\right]^{1/4}}\text{\ensuremath{\frac{1}{\sqrt{n}}}}e^{-\frac{2}{3}n\abs{\gamma_{{\alpha_{0}}}}^{3}}$  & III\tabularnewline
\hline
$[\alpha_{0}n-\omega(n^{1/3}),\,\alpha_{0}n+\omega(n^{1/3})]$  & $\frac{1}{\sqrt{k/n}(\alpha_{0}^{-1}-k/n)^{1/4}}\frac{Ai_*^{(n,k)}(n^{2/3}\gamma_{{\alpha_{0}}}^{2})}{n^{1/3}}$  & IV\tabularnewline
\hline
$[\alpha_{0}n+\omega(n^{1/3}),\,\alpha_{0}^{-1}n-\omega(n^{1/3})]$  & $\frac{1}{\sqrt{n}}\frac{\cos\left(nh(\varphi_{+})-\pi/4\right)+\varepsilon_n(k)}{\sqrt{k/n}\left[(\alpha_{0}^{-1}-k/n)(k/n-\alpha_{0})\right]^{1/4}}$  & V\tabularnewline
\hline
$[\alpha_{0}^{-1}n-\omega(n^{1/3}),\,\alpha_{0}^{-1}n+\omega(n^{1/3})]$  & $\frac{1}{\sqrt{k/n}(k/n-\alpha_{0})^{1/4}}\frac{Ai_*^{(n,k)}(n^{2/3}\gamma_{{\alpha_{0}}^{-1}}^{2})}{n^{1/3}}$  & VI\tabularnewline
\hline
$[\alpha_{0}^{-1}n+\omega(n^{1/3}),\,\alpha^{-1}n)$  & $\frac{1}{\sqrt{k/n}\left[(k/n-\alpha_{0}^{-1})(k/n-\alpha_{0})\right]^{1/4}}\frac{1}{\sqrt{n}}e^{-\frac{2}{3}n\abs{\gamma_{{\alpha_{0}}^{-1}}}^{3}}$  & VII\tabularnewline
\hline
$[\alpha^{-1}n,\,\infty)$  & $\frac{1}{\sqrt{k/n}\left[(\alpha_{0}-k/n)(\alpha_{0}^{-1}-k/n)\right]^{1/4}}\frac{1}{\sqrt{n}}\left(\frac{b_{\lambda}(z_{+})}{z_{+}^{k/n}}\right)^{n}$  & VIII\tabularnewline
\hline
\end{tabular}
}
\caption*{Asymptotic formulas for $\widehat{b_{\lambda}^{n}}(k)$ as $n\rightarrow\infty$,
up to numerical factors. For $k$ in Regions I--II and VIII, we have
$\abs{z_{+}^{-k/n}b_{\lambda}(z_{+})}<1$ and the decay of $\widehat{b_{\lambda}^{n}}(k)$
is exponential. The values $\gamma_{\alpha_{0}}$ and $\gamma_{{\alpha_{0}}^{-1}}$
are asymptotically given by $\gamma_{\alpha_{0}}^{2}\asymp\alpha_{0}-k/n$
and $\gamma_{\alpha_{0}^{-1}}^{2}\asymp k/n-\alpha_{0}^{-1}$ respectively
as $k/n\to\alpha_{0}$ and $\alpha_{0}^{-1}$. The formulas for $k$
in Regions III and VII ensure the transition between the exponential
decay (Regions I--II and VIII) and the $\cO(n^{-1/3})$ decay, which
occurs in Regions IV and VI when the distance between $k$ and $\alpha_{0}n$
respectively $\alpha_{0}^{-1}n$ does not exceed $n^{1/3}$. Finally,
the formula for $k$ in Region V ensures the transition to an oscillatory
decay of order $\cO(n^{-1/2})$ when $k$ is away from the boundaries
$\alpha_{0}n$ and $\alpha_{0}^{-1}n$ (we refer to Theorem \ref{Th_All_Regions}~(6)
for the definition of $h(\varphi_{+})$).}
\end{center}


\clearpage

\newpage

\clearpage

\begin{thebibliography}{99}

\bibitem{BaakeGrimm2012}
M. Baake and U. Grimm,
\emph{Mathematical diffraction of aperiodic structures},
Chem. Soc. Rev. 41 (2012), 6821--6843.

\bibitem{BaakeStrungaruTerauds2020}
M. Baake, N. Strungaru, and V. Terauds,
\emph{Pure point measures with sparse support and sparse Fourier--Bohr support},
Trans. London Math. Soc. 7 (2020), no. 1, 1--48.

\bibitem{BaranovJamingKellaySpeckbacher2024}
{A.~D.~Baranov, P.~Jaming, K.~Kellay, and M.~Speckbacher,
\emph{Oversampling and Donoho--Logan type theorems in model spaces},
Ann. Fenn. Math. 49 (2024), no.~1, 167--182.}

\bibitem{BlyudzeShimorin}
M. Yu. Blyudze and S. M. Shimorin,
\emph{Estimates of the norms of powers of functions in certain Banach spaces},
J. Math. Sci. 80 (1996), no. 4, 1880--1891.

\bibitem{BondarenkoRadchenkoSeip2023}
A. Bondarenko, D. Radchenko, and K. Seip,
\emph{Fourier interpolation with zeros of zeta and L-functions},
Constr. Approx. 57 (2023), no. 2, 405--461.

\bibitem{BFZCA}
A. Borichev, K. Fouchet, and R. Zarouf,
\emph{On the Fourier coefficients of powers of a Blaschke factor and strongly annular functions},
Constr. Approx. 60 (2024), 33--86.

\bibitem{BFZIMRN}
A. Borichev, K. Fouchet, and R. Zarouf,
\emph{On the Fourier coefficients of powers of a finite Blaschke product},
Int. Math. Res. Not. IMRN 2024 (2024), no. 20, 13255--13280.

\bibitem{Bourbaki}
N. Bourbaki,
\emph{Integration. Chapters 1--6},
Elements of Mathematics, Springer, Berlin, 2004.

\bibitem{Clark1972}
D. N. Clark,
\emph{One-dimensional perturbations of restricted shifts},
J. Anal. Math. 25 (1972), 169--191.

\bibitem{Cordoba1989}
A. C\'ordoba,
\emph{Dirac combs},
Lett. Math. Phys. 17 (1989), 191--196.


\bibitem{Goncalves2026}
{F.~Gon\c{c}alves,
\emph{A classification of Fourier summation formulas and crystalline measures},
arXiv:2312.11185, 2026.}

\bibitem{Garnett1981} 
J.~Garnett, \emph{Bounded Analytic Functions}, Academic Press, New York, 1981.

\bibitem{Guinand1959}
A. P. Guinand,
\emph{Concordance and the harmonic analysis of sequences},
Acta Math. 101 (1959), no. 3--4, 235--271.

\bibitem{HartmannJamingKellay2020}
{A.~Hartmann, P.~Jaming, and K.~Kellay,
\emph{Quantitative estimates of sampling constants in model spaces},
Amer. J. Math. 142 (2020), no.~4, 1301--1326.}

\bibitem{HormanderI}
L. H\"ormander,
\emph{The Analysis of Linear Partial Differential Operators I},
2nd ed., Springer, Berlin, 1990.

\bibitem{Jagerman1966}
D. Jagerman,
\emph{Bounds for truncation error of the sampling expansion},
SIAM J. Appl. Math. 14 (1966), no. 4, 714--723.

\bibitem{Jerri1977}
A. J. Jerri,
\emph{The Shannon sampling theorem---its various extensions and applications: A tutorial review},
Proc. IEEE 65 (1977), no. 11, 1565--1596.

\bibitem{KahaneMandelbrojt1958}
J.-P. Kahane and S. Mandelbrojt,
\emph{Sur l'\'equation fonctionnelle de Riemann et la formule sommatoire de Poisson},
Ann. Sci. \'Ec. Norm. Sup\'er. (3) 75 (1958), no. 1, 57--80.

\bibitem{KayaalpSzehr2026}
M. Kayaalp and O. Szehr,
\emph{Signal recovery from time and frequency samples},
arXiv:2603.16242, 2026.

\bibitem{Kolountzakis2016}
M. N. Kolountzakis,
\emph{Fourier pairs of discrete support with little structure},
J. Fourier Anal. Appl. 22 (2016), no. 1, 1--5.

\bibitem{Kulikov2021}
A. Kulikov,
\emph{Fourier interpolation and time-frequency localization},
J. Fourier Anal. Appl. 27 (2021), no. 3, Paper No. 58, 8 pp.

\bibitem{KNS2023}
A. Kulikov, F. Nazarov, and M. Sodin,
\emph{Fourier uniqueness and non-uniqueness pairs},
J. Math. Phys. Anal. Geom. 21 (2025), no.~1, 84--130.

\bibitem{KurasovSarnak2020}
{P.~Kurasov and P.~Sarnak,
\emph{Stable polynomials and crystalline measures},
J. Math. Phys. 61 (2020), no.~8, 083501.}

\bibitem{LevOlevskii2013}
N. Lev and A. Olevskii,
\emph{Measures with uniformly discrete support and spectrum},
C. R. Math. Acad. Sci. Paris 351 (2013), no. 15--16, 599--603.

\bibitem{LevOlevskii2015}
N. Lev and A. Olevskii,
\emph{Quasicrystals and Poisson's summation formula},
Invent. Math. 200 (2015), no. 2, 585--606.

\bibitem{LevOlevskii2016}
N. Lev and A. Olevskii,
\emph{Quasicrystals with discrete support and spectrum},
Rev. Mat. Iberoam. 32 (2016), no. 4, 1341--1352.


\bibitem{MazacRichardStrungaru2026}
{J.~Maz\'a\v{c}, C.~Richard, and N.~Strungaru,
\emph{On almost periodicity in crystalline measures},
arXiv:2605.23884, 2026.}


\bibitem{MeyerPNAS16}
Y. F. Meyer,
\emph{Measures with locally finite support and spectrum},
Proc. Natl. Acad. Sci. USA 113 (2016), no. 12, 3152--3158.

\bibitem{MeyerRMI2017}
Y. F. Meyer,
\emph{Measures with locally finite support and spectrum},
Rev. Mat. Iberoam. 33 (2017), no. 3, 1025--1036.

\bibitem{Meyer}
Y. F. Meyer,
\emph{Crystalline measures and mean-periodic functions},
DKNVS Skrifter 2 (2021), 5--30.

\bibitem{MeyerBourbaki1194}
Y. F. Meyer,
\emph{Mesures cristallines et applications},
S\'eminaire Bourbaki, Ast\'erisque 438 (2022), Exp. No. 1194, 479--494.

\bibitem{MeyerMultidimensional2023}
{Y.~F.~Meyer,
\emph{Multidimensional crystalline measures},
Trans. R. Norw. Soc. Sci. Lett. 2023 (2023), no.~1, 1--24.}


\bibitem{MeyerKahane2026}
Y. F. Meyer,
\emph{En relisant Jean-Pierre Kahane},
to appear in a volume in honor of Jean-Pierre Kahane, 2026.

\bibitem{MeyerKolmogorov}
{Y.~F.~Meyer,
\emph{Independent random variables and classical analysis},
forthcoming.}

\bibitem{MeyerPrivate2026}
{Y.~F.~Meyer,
\emph{Private communication}, 2026.}

\bibitem{NikolskisBook}
N. Nikolski,
\emph{Treatise on the Shift Operator}, Translated from the Russian \emph{Lektsii ob Operatore Sdviga}, "Nauka", Moscow, 1980. Springer-Verlag, Berlin, 1986.

\bibitem{RadchenkoViazovska2019}
D. Radchenko and M. Viazovska,
\emph{Fourier interpolation on the real line},
Publ. Math. Inst. Hautes \'Etudes Sci. 129 (2019), 51--81.

\bibitem{RamosSousa2022}
J. P. G. Ramos and M. Sousa,
\emph{Fourier uniqueness pairs of powers of integers},
J. Eur. Math. Soc. (JEMS) 24 (2022), no. 12, 4327--4351.

\bibitem{Saksman2007}
E. Saksman,
\emph{An elementary introduction to Clark measures},
in Topics in Complex Analysis and Operator Theory,
Universidad de M\'alaga, M\'alaga, 2007, pp. 85--136.

\bibitem{Szehr2025}
O. Szehr,
\emph{Spectral criteria for unique signal recovery from two-sided sampling},
arXiv:2509.14953, 2025.

\bibitem{SzehrZaroufJAM}
O. Szehr and R. Zarouf,
\emph{\(\ell^p\)-norms of Fourier coefficients of powers of a Blaschke factor},
J. Anal. Math. 140 (2020), 1--30.

\bibitem{SzehrZaroufSchaeffer2021}
O. Szehr and R. Zarouf,
\emph{Explicit counterexamples to Sch\"affer's conjecture},
J. Math. Pures Appl. 146 (2021), 1--30.

\bibitem{SzehrZaroufJAT2022}
O. Szehr and R. Zarouf,
\emph{On the asymptotic behavior of Jacobi polynomials with first varying parameter},
J. Approx. Theory 277 (2022), Paper No. 105702.


\bibitem{Weil1952}
A. Weil,
\emph{Sur les "formules explicites" de la th\'eorie des nombres premiers},
Comm. S\'em. Math. Univ. Lund [Medd. Lunds Univ. Mat. Sem.] (1952),
Tome suppl\'ementaire, 252--265.
\end{thebibliography}
\end{document}